\documentclass[12pt]{amsart}
\usepackage{amssymb, verbatim}
\usepackage{graphicx,color}
\usepackage{enumerate}
\usepackage{color,url}
\usepackage{cite}
\usepackage{mathtools}
\usepackage{xcolor}

\date{September 3, 2026}

\DeclareMathOperator\supp{supp}

\newtheoremstyle{ptheorem}{1em}{0em}{\itshape}{}{\bfseries}{.}{.5em}{\thmname{#1} \thmnumber{#2}\thmnote{ (\hspace{-.01pt}{#3})}}

\theoremstyle{ptheorem}
\newtheorem{theorem}{Theorem}
\newtheorem{corollary}[theorem]{Corollary}
\newtheorem{lemma}[theorem]{Lemma}
\newtheorem{proposition}[theorem]{Proposition}
\newtheoremstyle{hdef}{1em}{0em}{}{}{\bfseries}{.}{.5em}{\thmname{#1} \thmnumber{#2}\thmnote{ (\hspace{-.01pt}{#3})}}
\theoremstyle{hdef}
\newtheorem{definition}[theorem]{Definition}
\newtheorem{example}[theorem]{Example}
\theoremstyle{remark}
\newtheorem{remark}[theorem]{Remark}
\newtheorem{problem}{Problem}

\numberwithin{equation}{section}
\numberwithin{theorem}{section}

\title[Stieltjes-generated moment functionals]{Stieltjes-generated moment functionals and orthogonality: a transfer principle and an image theorem}

\author[I. Area]{Iv\'an Area}
\address[I. Area]{IFCAE, Universidade de Vigo, Departamento de Matem\'atica Aplicada II, E. E. Aeron\'au\-tica e do Espazo, Campus As Lagoas s/n, 32004 Ourense, Spain}
\email[I. Area]{area@uvigo.gal}

\author[F. J. Fern\'andez]{Francisco J. Fern\'andez}
\address[F. J. Fern\'andez]{Departamento de Estat\'{\i}stica, An\'alise Matem\'atica e Optimizaci\'on,  Universidade de Santiago de Compostela, 15782, Facultade de Matem\'aticas, Campus Vida, Santiago, Spain. CITMAga, Santiago de Compostela, 15782, Spain}
\email[F. J. Fern\'andez]{fjavier.fernandez@usc.es}

\author[F. A. F. Tojo]{F. Adri\'an F. Tojo}
\address[F. A. F. Tojo]{Departamento de Estat\'{\i}stica, An\'alise Matem\'atica e Optimizaci\'on,  Universidade de Santiago de Compostela, 15782, Facultade de Matem\'aticas, Campus Vida, Santiago, Spain. CITMAga, Santiago de Compostela, 15782, Spain}
\email[F. A. F. Tojo]{fernandoadrian.fernandez@usc.es}

\subjclass[2020]{26A24, 42C05, 33C45, 30E05}

\keywords{Stieltjes derivatives; moment functionals; orthogonal polynomials; Uvarov modifications; Dirac-derivative perturbations; Laguerre--P\'olya class; factorial moments}

\begin{document}

\begin{abstract}
We study the orthogonality induced by Stieltjes derivatives from the point of view of ordinary polynomial moment functionals.  If \(g\) is a derivator and the Stieltjes monomials \(g_{x_0,n}\) are linearly independent, the map \(\Phi(t^n)=g_{x_0,n}\) is an isomorphism of differential algebras onto the ordinary polynomial algebra, and the intrinsic \(\star\)-bilinear form on \(g\)-polynomials is the pull-back of a single ordinary moment functional \(\mathcal L\).  The object of interest is therefore not a new polynomial algebra but the functional generated by the derivator, and our main theorem determines exactly which functionals occur.  Let \(c_-\) and \(c_+\) be the increments of the continuous part of \(g\) on the two sides of the centre, and let \(A\) and \(B\) be the multisets of jump sizes to its right and to its left.  Then \(\mathcal L\) is Lebesgue measure on \([-c_-,c_+]\), plus an endpoint derivative functional governed by the elementary symmetric functions of \(A\), plus an absolutely continuous left tail whose density is the survival function of a sum of independent exponentials with means in \(B\); the positions of the jumps disappear.  Every quadruple \((c_-,c_+,A,B)\) occurs and is recovered from \(\mathcal L\), so the fibres are described exactly, and \(\mathcal L\) is representable by a positive measure precisely when \(|A|\le1\).  A weighted lattice extension, in which the transported moments are the factorial moments of the weight, is treated separately.
\end{abstract}

\maketitle

\section{Introduction}

Orthogonal polynomials are simultaneously algebraic objects, solutions of differential or difference equations, and encodings of moment functionals.  This interaction is particularly transparent for the classical continuous and discrete families; standard references include \cite{NSU,KOEKOEK,MR0481884}.  Perturbations of the underlying functional by point masses lead to Uvarov-type systems \cite{Uvarov1969,Koornwinder1984}, whereas perturbations by Dirac deltas and their derivatives lead naturally to quasi-definite, generally non-positive, linear functionals and have an established theory of their own \cite{AlvarezNodarse2004}.  Sobolev orthogonality provides another broad setting in which derivatives and several measures enter the scalar product \cite{Althammer,ISERLES1991151,survey}.

The present work originates in Stieltjes differential calculus.  A derivator \(g\) is a left-continuous non-decreasing function, its Lebesgue--Stieltjes measure is denoted by \(\mu_g\), and differentiation with respect to \(g\) accommodates an absolutely continuous evolution and jump contributions in a single calculus.  This framework goes back to Young and Daniell \cite{YOUNG,DANIELL}; modern formulations and applications can be found, for example, in \cite{RODRIGO,FRAN,FrTo,MaiaTojo}.  The recursively defined Stieltjes monomials \(g_{x_0,n}\) play the role of powers centred at \(x_0\) and satisfy
\[
 D_g g_{x_0,n}=n g_{x_0,n-1}.
\]
They therefore suggest a polynomial calculus adapted to \(D_g\).

There are two distinct orthogonality constructions.  The first uses the ordinary pointwise product and the measure \(\mu_g\).  For the model derivator considered below, this is exactly a classical Legendre functional with an interior mass, and hence belongs to the familiar Uvarov theory.  The second construction works inside the linear span \(\mathbb P_g\) of the Stieltjes monomials and introduces the product
\[
 g_{x_0,n}\star g_{x_0,m}=g_{x_0,n+m}.
\]
The decisive issue is whether this second construction creates a genuinely new abstract orthogonality or merely transports an ordinary polynomial orthogonality.

A previous version of this manuscript treated the \(\star\)-construction itself as the main source of novelty.  A referee correctly observed, in the one-jump model, that the correspondence \(t^n\mapsto g_{x_0,n}\) identifies the \(\star\)-algebra with the ordinary polynomial algebra and that the resulting family is a Uvarov--Legendre family in disguise.  Rather than defending the earlier interpretation, we take that observation as the starting point of the present paper and determine its full consequences.  This change of viewpoint leads to a substantially stronger question:
\begin{quote}
\emph{Which ordinary polynomial moment functionals can be produced by a Stieltjes derivator, and what information about the derivator survives?}
\end{quote}

The first main result is a transfer principle.  Under the natural linear-independence hypothesis on the Stieltjes monomials, \(\Phi(t^n)=g_{x_0,n}\) is an isomorphism of differential algebras and the \(\star\)-bilinear form is the pull-back of a single ordinary polynomial moment functional \(\mathcal L\).  Its moments satisfy the elementary but fundamental identity
\[
 m_n=\mathcal L(t^n)=
 \frac{g_{x_0,n+1}(b)-g_{x_0,n+1}(a)}{n+1}.
\]
Consequently, recurrence coefficients, orthogonality constants (squared norms in the positive-definite case) and differential equations of the \(\star\)-orthogonal family are transported ordinary polynomial data.  We state this explicitly because it precisely separates the part of the construction that is classical from the part that remains specific to Stieltjes calculus.

The second and central result is a complete image theorem for the unweighted correspondence \(g\mapsto\mathcal L\), with the centre constrained to the orthogonality interval.  The exponential generating function of the Stieltjes monomials is the Stieltjes exponential.  This converts the recursive construction into the closed formula
\[
 \mathcal L(e^{zt})=
 \frac{1}{z}\left(
 e^{c_+z}\Pi_A(z)-\frac{e^{-c_-z}}{\Pi_B(z)}
 \right),
 \qquad
 \Pi_A(z)=\prod_{\alpha\in A}(1+\alpha z),
 \quad
 \Pi_B(z)=\prod_{\beta\in B}(1+\beta z).
\]
Here \(c_-\) and \(c_+\) record only the total continuous increments on the two sides of the centre, while \(A\) and \(B\) record only jump sizes.  In particular, the positions of all jumps and the detailed shape of the continuous part disappear.  Read term by term, this identity says that \(\mathcal L\) is Lebesgue measure on \([-c_-,c_+]\), plus an endpoint derivative functional at \(c_+\) whose coefficients are the elementary symmetric functions of \(A\), plus an absolutely continuous tail on \((-\infty,-c_-]\) whose density is the survival function of a sum of independent exponential random variables with means \(\beta\in B\); for finitely many distinct means this is the classical hypoexponential survival function.  We prove not only this invariance but also the two converse statements that make the word ``image'' literal: every quadruple \((c_-,c_+,A,B)\) with \(c_\pm\ge0\) and summable multisets \(A,B\) of positive numbers is realized by an admissible derivator, and the meromorphic function \(z\mathcal L(e^{zt})\) determines \((c_-,c_+,A,B)\).  Thus the fibres are classified exactly.

The representation exposes an orientation effect.  Right-hand jumps enter through \(\Pi_A\) and hence generate endpoint derivative evaluations with coefficients equal to elementary symmetric functions.  Left-hand jumps enter through \(\Pi_B^{-1}\) and generate an absolutely continuous tail governed by the survival function of a summable sum of independent exponentials; for finitely many distinct rates this is the classical hypoexponential survival function.  The familiar one-jump Uvarov case is therefore only the first member of a hierarchy: two or more right jumps give derivative-of-Dirac perturbations, a setting already studied abstractly for general quasi-definite functionals \cite{AlvarezNodarse2004}, whereas the Stieltjes construction imposes the special symmetric-function coefficients and erases the jump locations.  On the left, the apparently infinite derivative expansion is better understood as an ordinary positive measure with an exponential, or more generally convolution-of-exponentials, tail.

A third main result gives a sharp positivity criterion.  The functional is representable by a positive Borel measure exactly when there is at most one right-hand jump, and it is positive definite precisely when, in addition, the representing measure has infinite support.  The proof is not based on finite Hankel experiments: it uses the boundedness of Fourier transforms of finite signed measures and the growth of \(\Pi_A(iy)\).  This also identifies a positive family of piecewise weights consisting of a flat interval joined to a left tail described by a convolution of exponentials.  We are not aware of a systematic treatment of this particular family.  There is, of course, extensive nearby literature on orthogonal polynomials for weights with jumps or discontinuities, including Gaussian examples \cite{XuZhao2011,Bogatskiy2016}; for this family we therefore formulate a precise open problem rather than an absolute novelty claim.

Finally, we distinguish the unweighted image theorem from weighted extensions.  For the unit-lattice derivator, weighting the \(\star\)-bilinear form turns ordinary moments into factorial moments.  Three classical weights then behave differently.  Meixner produces a scaled Gamma--Laguerre functional, Charlier degenerates to evaluation at one point, and Krawtchouk produces a finite-order derivative functional.  In particular, finite support of the original discrete weight must not be confused with positivity of the transported \(\star\)-functional.

The paper is organized as follows.  Section~\ref{sec:basic} recalls the Stieltjes calculus and the monomials.  Section~\ref{sec:gop} treats ordinary-product orthogonality, and Section~\ref{sec:gogp} defines the intrinsic \(\star\)-product.  Section~\ref{sec:transfer} proves the transfer principle.  Section~\ref{sec:image} contains the master generating identity, the image and fibre theorems, and the positivity classification.  Section~\ref{sec:dictionary} develops explicit right-jump, left-jump and weighted lattice realizations.  Section~\ref{sec:model} retains the one-jump model in full detail, including all explicit coefficients and its second-order \(g\)-differential equation, both as a worked realization and as an independent check of the structural theory.  The final section summarizes the conclusions and isolates the flat-plus-exponential-convolution family as an open problem.

\section{Basic definitions and notations}\label{sec:basic}

We begin this section with a summary of basic results related to Stieltjes derivatives, following \cite{FRAN,Cora,FRIGON}. As for classical univariate orthogonal polynomials, there are a number of {standard references}, such \cite{AAR,KOEKOEK,NSU}.

\begin{definition}
Let $g: {\mathbf{R}} \to {\mathbf{R}}$ be a left-continuous non-decreasing function. Under these {hypotheses} we shall refer to $g$ as a \emph{derivator}. In what follows any function {denoted by the letter} $g$ will refer to a function satisfying these two conditions.
\end{definition}

It is important to notice that derivators defined on a closed interval $[a,b] \subset {\mathbf{R}}$ can be extended to the whole real line.

\begin{definition}
Let $g$ be a derivator. Then, we can introduce a pseudometric
\begin{equation}
d_{g}(x,y)=\vert g(x) - g(y) \vert.
\end{equation}
\end{definition}

With this pseudometric we can introduce the \emph{open $g$-balls}:
\begin{definition}
Given any $x \in {\mathbf{R}}$ and $\epsilon >0$,
\begin{equation}
B_{g}(x,\epsilon):=\left \{ y \in {\mathbf{R}} \, \vert \, d_{g}(x,y) < \epsilon \right \}.
\end{equation}
\end{definition}

\begin{lemma}[{\cite{FRIGON}}]
We have that
\begin{equation}
\tau_{g}:=\left \{ U \subset {\mathbf{R}} \, \vert \, \forall x \in U \, \,\exists \epsilon>0 \, : \, B_{g}(x,\epsilon) \subset U \right \}
\end{equation}
is a topology where the {$g$-}balls are open sets.
\end{lemma}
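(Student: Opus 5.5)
The plan is to check the topology axioms for \(\tau_g\) one at a time, and then show separately that each \(g\)-ball belongs to \(\tau_g\). The only tool is the triangle inequality for the pseudometric \(d_g\). It holds because \(d_g(x,y)=|g(x)-g(y)|\) is the pullback of the usual metric on \(\mathbf R\) under \(g\). Therefore \(d_g(x,x)=0\), \(d_g\) is symmetric, and \(d_g(x,z)\le d_g(x,y)+d_g(y,z)\). Monotonicity and left-continuity of \(g\) play no role here; the argument is the standard one for pseudometric spaces.

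\textbf{The axioms.} The empty set belongs to \(\tau_g\) vacuously. The whole line \(\mathbf R\) belongs to \(\tau_g\) because every ball is contained in it.

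For an arbitrary union \(U=\bigcup_i U_i\) of members of \(\tau_g\), take \(x\in U\), so \(x\in U_{i_0}\) for some index \(i_0\). That set gives an \(\epsilon>0\) with \(B_g(x,\epsilon)\subset U_{i_0}\subset U\).

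For a finite intersection \(U_1\cap\dots\cap U_k\), take \(x\) in it. Each \(U_j\) gives some \(\epsilon_j>0\) with \(B_g(x,\epsilon_j)\subset U_j\). Put \(\epsilon=\min_j\epsilon_j>0\). Balls are nested in the radius, so \(B_g(x,\epsilon)\subset B_g(x,\epsilon_j)\subset U_j\) for every \(j\), and hence \(B_g(x,\epsilon)\) lies in the intersection.

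\textbf{Balls are open.} This is the only step that uses the triangle inequality. Fix \(B_g(x,\epsilon)\) and a point \(y\) in it, and set \(\delta=\epsilon-d_g(x,y)\), which is positive. If \(z\in B_g(y,\delta)\), then
\[
d_g(x,z)\le d_g(x,y)+d_g(y,z)<d_g(x,y)+\delta=\epsilon,
\]
so \(z\in B_g(x,\epsilon)\). Thus \(B_g(y,\delta)\subset B_g(x,\epsilon)\), and so \(B_g(x,\epsilon)\in\tau_g\).

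No step is a genuine obstacle. The only care needed is to note that \(d_g\) may vanish for \(x\ne y\), for instance on intervals where \(g\) is constant. This does not affect any step above, since none of them uses definiteness of the metric. One consequence is that \(\tau_g\) need not be Hausdorff, but the statement does not claim that.
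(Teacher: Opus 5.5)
Your proof is correct: it is the standard check that the balls of a pseudometric generate a topology, with the triangle inequality for $d_g(x,y)=|g(x)-g(y)|$ used only to show that the $g$-balls are open, and you rightly note that neither monotonicity, left-continuity nor definiteness is needed. The paper gives no proof of its own; it only quotes the result from the cited reference, and your argument is the standard pseudometric argument behind that result.
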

{We will denote the usual topology by $\tau_{u}$.}

\begin{definition}
Given any {set} $X \subset {\mathbf{R}}$, we say a function $f:X \to {\mathbf{R}}$ is \emph{$g$-continuous at a point $x \in X$} if
\begin{equation}
\forall \epsilon >0 \,\, \exists \delta>0 \,\,\text{ such that {if}} \,\, y \in B_{g}(x,\delta)\cap X \implies \vert f(x)-f(y) \vert < \epsilon.
\end{equation}
We say $f$ is \emph{$g$-continuous on $X$} if it is $g$-continuous at every point $x \in X$.
\end{definition}

\begin{proposition}[{\cite[Proposition 3.2]{FRIGON}}]
If $f:X \to {\mathbf{R}}$ is $g$-continuous on $X$, then
\begin{enumerate}
\item $f$ is left-continuous at every point $x \in X$;
\item if $g$ is continuous at $x \in X$, then $f$ is continuous at $x \in X$;
\item if $g$ is constant on some interval {$[\alpha,\beta]\subset\mathbf{R}$}, then $f$ is {constant} on $[\alpha,\beta] \cap X$.
\end{enumerate}

\end{proposition}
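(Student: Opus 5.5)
The plan is to prove the three items directly from the definition of $g$-continuity, using in each case that $g$ is non-decreasing and left-continuous. The common observation is that $g$-balls around $x$ always contain certain ordinary neighbourhoods of $x$ (one-sided or two-sided, depending on $g$ at $x$). Then $g$-continuity forces ordinary one-sided or two-sided continuity.

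For (1), fix $x\in X$ and $\epsilon>0$, and take the $\delta>0$ given by $g$-continuity at $x$. Since $g$ is left-continuous at $x$, there is $\eta>0$ with $g(x)-g(y)<\delta$ for all $y\in(x-\eta,x]$. Monotonicity gives $0\le g(x)-g(y)$ for such $y$, so $(x-\eta,x]\subset B_g(x,\delta)$. Hence $|f(x)-f(y)|<\epsilon$ for all $y\in(x-\eta,x]\cap X$, which is left-continuity of $f$ at $x$ (relative to $X$).

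For (2), argue in the same way, but now $g$ is continuous at $x$ from both sides. We therefore obtain $\eta>0$ with $|g(y)-g(x)|<\delta$ on $(x-\eta,x+\eta)$. This interval lies inside $B_g(x,\delta)$, and ordinary continuity of $f$ at $x$ follows.

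For (3), let $g\equiv c$ on $[\alpha,\beta]$. Then any two points $x,y\in[\alpha,\beta]$ satisfy $d_g(x,y)=0$, so $y\in B_g(x,\delta)$ for every $\delta>0$. Hence $g$-continuity at $x$ gives $|f(x)-f(y)|<\epsilon$ for every $\epsilon>0$, that is, $f(x)=f(y)$. So $f$ is constant on $[\alpha,\beta]\cap X$.

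Notice that (3) needs no connectedness argument: the pseudometric collapses the whole interval to a single point.

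I do not expect a real obstacle here. The only point needing care is in (1): the interval $(x-\eta,x]$ must be intersected with $X$, since $f$ is only defined on $X$. Continuity should therefore be read relative to $X$, exactly as in the definition of $g$-continuity on $X$.
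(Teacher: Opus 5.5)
Your proof is correct and complete. Left-continuity plus monotonicity of $g$ puts $(x-\eta,x]$ inside $B_g(x,\delta)$, two-sided continuity puts $(x-\eta,x+\eta)$ inside it, and $d_g$ vanishes on any interval where $g$ is constant. The paper gives no proof of its own and only cites \cite{FRIGON}; your direct verification from the definitions is the natural argument for this statement.
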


\begin{definition}
{Denote by
\begin{equation}
g(x^{+})\equiv\lim_{y \to x^{+}} g(y) \in {\mathbf{R}},
\end{equation}
 the \emph{limit from the right of $g$ at the point $x$}} for any $x \in {\mathbf{R}}$. Let
\begin{equation}
\Delta g(x):=g(x^{+})-g(x).
\end{equation}
\end{definition}

\begin{definition}
Given a derivator $g$, let
\begin{equation}
D_{g}=\{ x \in {\mathbf{R}} \, \vert \, \Delta g(x) >0 \},
\end{equation}
the set of discontinuities of $g$. Let
\begin{equation}
C_{g}=\{ x \in {\mathbf{R}} \, \vert \, g \text{ is constant on } (x-\epsilon,x+\epsilon) \text{ for some } \epsilon>0 \}.
\end{equation}
\end{definition}

Note that $C_{g}$ is open in the usual topology. Hence, we can write
\begin{equation}
C_{g}=\bigcup_{n \in \Lambda} (a_{n},b_{n}),
\end{equation}
where $\Lambda \subset {\mathbf{N}}$, $(a_{k},b_{k}) \cap (a_{l},b_{l})=\emptyset$ if $k \neq l$ and $a_{n},b_{n} \in {\mathbf{R}} \cup \{+\infty,-\infty\}$ for $n \in \Lambda$. With these notations
\begin{equation}
N_{g}^{-}=\{a_{n} \}_{n \in \Lambda} \setminus D_{g}, \qquad N_{g}^{+}=\{b_{n} \}_{n \in \Lambda} \setminus D_{g}, \quad N_{g}=N_{g}^{-} \cup N_{g}^{+}.
\end{equation}

\begin{example}
Let us consider
\begin{equation}\label{eq:g0}
g_{0}(x)=x, \quad x \in [-1,1],
\end{equation}
which implies that $g_{0}$-derivative is the classical derivative. Then we have $C_{g_{0}}=D_{g_{0}}=N_{g_{0}}^{+}=N_{g_{0}}^{-}=\emptyset$ (i.e.{,} the empty set).
\end{example}

\begin{example}
If we consider a modification of $g_{0}$ defined in \eqref{eq:g0} by adding a jump discontinuity at $x_{0}=0$,
\begin{equation}\label{eq:ex1}
g_{1}(x;\alpha)=\begin{cases}
x, & x \in [-1,0], \\
x+\alpha{,} & x \in (0,1],
\end{cases} =\begin{cases}
x, & x \leq 0, \\
x+\alpha, & x>0{,}
\end{cases}
\end{equation}
then, if {$\alpha > 0$}, we have $C_{g_{1}}=N_{g_{1}}^{+}=N_{g_{1}}^{-}=\emptyset$, and $D_{g_{1}}=\{0\}$. As $\alpha \to 0${,} $g_{1}(x;\alpha)$ tends to $g_{0}(x)=x$.
\end{example}

\begin{definition}[{\cite[Definition 3.7]{FRAN}}]
Let $a,b \in {\mathbf{R}}$ be such that $a<b$, $a \not \in N_{g}^{-}$ and $b \not \in D_{g} \cup N_{g}^{+} \cup C_{g}$. We define the Stieltjes derivative or $g$-derivative of a function $f:[a,b] \to {\mathbf{R}}$ at a point $x \in [a,b]$ as
\begin{equation}
f'_{g}(x)=
\begin{cases}
\displaystyle \lim_{y \to x^{+}} \frac{f(y)-f(x)}{g(y)-g(x)}, & x \in D_{g}, \\[5mm]
\displaystyle \lim_{y \to x} \frac{f(y)-f(x)}{g(y)-g(x)}, & x \not \in D_{g} \cup C_{g}, \\[5mm]
\displaystyle \lim_{y \to b_{n}^{+}} \frac{f(y)-f(b_{n})}{g(y)-g(b_{n})}, & x \in (a_{n},b_{n}) \subset C_{g},
\end{cases}
\end{equation}
where $(a_{n},b_{n})$ is a connected component of $C_{g}$. {In the case $x \notin D_g \cup C_g$, it may happen that $x \in N_g$. In that situation, the limit must be understood in a one-sided sense, namely}
\begin{equation}
f_{g}'(x)=\begin{cases}
\displaystyle \lim_{y \to x^{+}} \frac{f(y)-f(x)}{g(y)-g(x)}, & x \in N_{g}^{+}, \\[5mm]
\displaystyle \lim_{y \to x^{-}} \frac{f(y)-f(x)}{g(y)-g(x)}, & x  {\in} N_{g}^{-}.
\end{cases}
\end{equation}
\end{definition}

For convenience, for each $t\in\mathbf{R}$, we consider the following notation from~\cite[Proposition~3.9]{FRAN}:
\begin{equation}\label{eq:notation of t^*}
t^*=\begin{cases}
b_n, & \text{if $t\in(a_n,b_n)\subset C_g$, and $b_n<\infty$,}\\
t, & \text{otherwise.}
\end{cases}
\end{equation}
Observe that $t^* \notin C_g$ for all $t\in \mathbf{R}$.

Let us consider the linear space ${\mathbb{P}}$ of polynomials
\begin{equation}
{\mathbb{P}}=\left\{ a_{0}+\sum_{i=1}^{n} a_{i,n} x^{i} \, \vert \, {a_0,}\ a_{i,n} \in {\mathbf{R}}, \quad n \in {\mathbf{N}} \right \}.
\end{equation}

{
\begin{definition}\label{def:mug}
Given a nondecreasing function $g$, we will denote the \emph{Lebesgue-Stieltjes measure generated by $g$} by $\mu_g$, that is, the Borel measure such that $\mu_g([a,b))=g(b)-g(a)$ for any $a,b\in\mathbf{R}$, $a\le b$. The integrals with respect to this measure will be denoted as
\[
\int_{A}f(s)\,d\mu_{g}(s)\equiv \int_{A} f\,d\mu_{g},
\]
for a $\mu_g$--measurable set $A$ and a $\mu_g$--measurable function $f:A\to\mathbf R$. We will use the following notation for integrals:
	
\[
\int_{x}^{y}f(s)\,d\mu_{g}(s):=\begin{dcases}
\int_{[x,y)} f(s)\, d\mu_g(s), &  y\geq x,\\
-\int_{[y,x)} f(s)\, d\mu_g(s), &  y< x.
\end{dcases}
\]
\end{definition}
}

We shall also use the linear space ${\mathbb{P}}_{g}$ of $g$-polynomials. In this regard, we need to introduce the $g$-monomials following \cite{Cora}.

\begin{definition}\label{def:gmonomials}
Let $g:{\mathbf{R}} \to {\mathbf{R}}$ be a derivator and let $x_{0} \in {\mathbf{R}}$. We define
\begin{equation}
g_{x_{0},0}(x)=1, \quad \forall x \in {\mathbf{R}},
\end{equation}
and for $n \in {\mathbf{N}}$ the $g$-monomials centered at $x_{0}\in\mathbf R$, where $x_{0}$ is called the center, $g_{x_{0},n}: {\mathbf{R}} \to {\mathbf{R}}$ are recursively defined by
{\begin{equation}\label{eq:gmonomials}
g_{x_{0},n}=
{n \int_{x_{0}}^x g_{x_{0},n-1} d\mu_{g}}, \quad \forall x \in {\mathbf{R}}.
\end{equation}}
\end{definition}

\begin{remark}
We would like to emphasize the following key property of $g$-monomials \cite{Cora}
\begin{equation}\label{eq:dgm}
(g_{x_{0},n})'_{g}(x)=n g_{x_{0},n-1}{(x)}.
\end{equation}
The idea of this relation is the same as in the continuous case for $x^{n}$ and of  \cite[Eq. (2.22)]{SERGEI} in the framework of orthogonal polynomials on nonuniform lattices.
\end{remark}

\begin{definition}[{\cite[Definition 3.1]{Cora}}]
We shall call $g$-polynomials centered at {$x_{0}$  the linear combinations} of $g$-monomials centered at $x_{0}$, and we shall consider the linear space of $g$-monomials
\begin{equation}
{\mathbb{P}}_{g}=\left\{ a_{0}+\sum_{i=1}^{n} a_{i,n} g_{x_{0},i}(x) \, \vert \, {a_{0},}\  a_{i,n} \in {\mathbf{R}}, \quad n \in {\mathbf{N}} \right \}.
\end{equation}
\end{definition}
Due to \cite[Proposition 3.14]{Cora}, this definition does not depend on the point $x_0$ chosen. {The work \cite{gmon} studies} the density of the $g$-monomials in appropriate space of functions.

\begin{definition}[{\cite{fernandez-tojo-2020}}]
We define $g^B:\mathbf{R}\to \mathbf{R}$ as:
\begin{displaymath}
g^B(t)=\begin{cases}
\displaystyle
\sum_{s\in [a,t)\cap D_g}\Delta  g(s), & t>a, \\
\displaystyle
-\sum_{s\in [t,a)\cap D_g} \Delta g(s), & t\leq a.
\end{cases}
\end{displaymath}
It is clear that $g^B$ is a left-continuous and non-decreasing function. Moreover, the map $g^C:\mathbf{R}\to\mathbf{R}$ given by
\begin{displaymath}
g^C(t):=g(t)-g^B(t),
\end{displaymath}
is also non-decreasing and continuous. We say {that} $g^C$  is the \emph{continuous part} of $g$ and {that} $g^B$ is the \emph{jump part} of $g$. {Similarly to Definition \ref{def:mug} it is possible to introduce the Lebesgue-Stieltjes measures $\mu_{g^{B}}$ and $\mu_{g^{C}}$ for the jump and continuous parts of the derivator $g$, respectively.}
\end{definition}

\begin{example}
Let us consider the derivator $g=g_{1}$ defined in \eqref{eq:ex1}. Next we shall compute the $g$-monomials for this derivator. For $n=0${, by convention, we set} $g_{0,0}(x)=1$. Moreover, for $n=1$ we have, first for $x \geq 0$
\begin{equation}
g_{0,1}(x)=\int_{[0,x)} 1 d\mu_{g}=g(x)-g(0)=g(x)=\begin{cases} x+\alpha, & x >0, \\ 0, & x=0. \end{cases}
\end{equation}
For $x <0$ we have,
\begin{equation}
g_{0,1}(x)=\int_{{[0,x)}} 1 d\mu_{g}=-\int_{[x,0)} 1 d\mu_{g}=-g(0)+g(x)=x.
\end{equation}
{Combining both parts yields}
\begin{equation}
g_{0,1}(x)=\begin{cases}  x, & x \leq 0, \\ x+\alpha, & x>0. \end{cases}
\end{equation}
Similarly, for $n=2$ we have, first for $x \geq 0$,
\begin{equation}
g_{0,2}(x)=2 \int_{[0,x)} g_{0,1}(s) d\mu_{g}(s)=2 \int_{[0,x)} g_{0,1}(s) d\mu_{g^{B}}(s)+2 \int_{[0,x)} g_{0,1}(s) d\mu_{g^{C}}(s).
\end{equation}
The first term can be easily computed since the derivator $g$ has only one jump point
\begin{equation}
2 \int_{[0,x)} g_{0,1}(s) d\mu_{g^{B}}(s)=2 g_{0,1}(0)\mu_{g}(\{0\})=0.
\end{equation}
The second term is also easy to compute since we can use Riemann integrals to obtain
\begin{equation}
2 \int_{[0,x)} g_{0,1}(s) d\mu_{g^{C}}(s)=2 \int_{0}^{x} s+\alpha ds=x^{2}+2\alpha x.
\end{equation}
For $x<0$ since we do not have the jump point, we can use directly Riemann integration to get
\begin{equation}
2 \int_{[0,x)} g_{0,1}(s) d\mu_{g^{C}}(s)=-2 \int_{[x,0)} g_{0,1}(s) d\mu_{g}(s)=x^{2}.
\end{equation}
Again, we can summarize this second monomial as
\begin{equation}
g_{0,2}(x)=\begin{cases} x^{2}, & x \leq 0, \\ x^{2}+2\alpha x, & x>0. \end{cases}
\end{equation}
If we proceed by induction it is possible to obtain the general form for the $g$-monomials
\begin{equation}\label{eq:explicitmonomials}
g_{0,n}(x)=\begin{cases} x^{n}, & x \leq 0, \\ x^{n}+n \alpha x^{n-1}, & x>0. \end{cases}
\end{equation}
If we consider the limit as $\alpha \to 0$ in the above expression we recover the classical monomials $x^{n}$.

\end{example}

\begin{remark}\label{remark:1}
Let us consider two $g$-monomials of degree $p$ and $q$, respectively. For $x>0$ we have
\begin{equation}
g_{p}(x)=x^{p}+p \alpha x^{p-1}, \quad g_{q}(x)=x^{q}+q \alpha x^{q-1}.
\end{equation}
If we compute
\begin{multline}
g_{p}(x)g_{q}(x)=\left(x^{p}+p \alpha x^{p-1}\right)\left( x^{q}+q \alpha x^{q-1} \right) \\
=\alpha ^2 p q x^{p+q-2}+\alpha  p x^{p+q-1}+\alpha  q x^{p+q-1}+x^{p+q}=g_{p+q}(x)+\alpha^{2} p q x^{p+q-2},
\end{multline}
i.e.
\begin{equation}
g_{p}(x)g_{q}(x) \neq g_{p+q}(x),
\end{equation}
but
\begin{equation}\label{eq:promo}
g_{p}(x)g_{q}(x)=g_{p+q}(x)+\alpha^{2} p q x^{p+q-2}.
\end{equation}
\end{remark}

\section{$g$-orthogonal polynomials}\label{sec:gop}

\begin{definition}
Let $g$ be a derivator. Let $\{P_{n}(x)\}_{n}$ be a sequence of polynomials, where {$\text{deg}(P_{n})=n$}. We shall say that $\{P_{n}(x)\}_{n}$ is \emph{$g$-orthonormal polynomial {sequence}} on {the interval} $[a,b]$ if
\begin{equation}
\langle P_{n},P_{m} \rangle_{g}:=\int_{[a,b)} P_{n}(s)P_{m}(s)d\mu_{g}(s)=\delta_{n,m}=\begin{cases} 0, & n \neq m, \\ 1, & n=m.  \end{cases}
\end{equation}
\end{definition}

{\begin{remark}
For the derivator $g_1$ defined in \eqref{eq:ex1}, the induced inner product
\[
\langle u,v\rangle_g=\int_{-1}^{1}u(s)v(s)\,ds+\alpha\,u(0)v(0)
\]
coincides with the classical Legendre weight perturbed by an interior Dirac mass at $0$.
This corresponds to a well-known Uvarov-type modification of the measure and therefore the orthogonality structure in this section is not new in itself.
Our contribution here is to reinterpret this classical perturbation within the framework of Stieltjes derivatives; this serves as the point of comparison for the $\star$-based construction of Section~\ref{sec:gogp} and for the transfer principle of Section~\ref{sec:transfer}.
\end{remark}
}

\begin{example}
We shall first compute the first terms of the sequence of $g$-Legendre orthonormal polynomials on $[-1,1]$ by considering the derivator $g$ defined in \eqref{eq:ex1}.

First of all, we compute
\begin{multline}
\langle 1, 1 \rangle_{g}=\int_{[-1,1)}  d\mu_{g}(s)=
\int_{[-1,0)} 1 d\mu_{g}(s)+\int_{[0,1)} 1 d\mu_{g}(s) \\ =
\int_{[-1,0)} 1 {d\mu_g(s)}+\int_{[0,1)} 1 d\mu_{g^{B}}(s)+\int_{[0,1)} 1 d\mu_{g^{C}}(s)={2+\alpha}.
\end{multline}
Therefore, the orthonormal $g$-Legendre polynomial of degree 0 is given by
\begin{equation}
P_{0}^{g}(x)=\frac{1}{\sqrt{2+\alpha}}.
\end{equation}

Let us now consider a $g$-polynomial of degree $1$,
\begin{equation}
P_{1}^{g}(x)=\nu_{0,1} +\nu_{1,1} x.
\end{equation}
If we impose the conditions
\begin{equation}
\langle P_{1}^{g},P_{0}^{g}\rangle_{g}=\int_{[-1,1)} P_{1}^{g}(s)P_{0}^{g}(s) d\mu_{g}(s)=0, \quad \langle P_{1}^{g},P_{1}^{g}\rangle_{g}=\int_{[-1,1)} P_{1}^{g}(s)P_{1}^{g}(s) d\mu_{g}(s)=1,
\end{equation}
we obtain
\begin{equation}
\nu_{0,1}=0, \quad \nu_{1,1}=\sqrt{\frac{3}{2}}.
\end{equation}
Therefore, the $g$-Legendre orthonormal polynomial of degree $1$ with respect to the derivator $g$ defined in \eqref{eq:ex1} is given by
\begin{equation}
P_{1}^{g}(x)=\sqrt{\frac{3}{2}} x,
\end{equation}
which coincides with the classical orthonormal Legendre polynomial.

Similarly,
\begin{equation}
P_{2}^{g}(x)=\frac{\sqrt{\frac{5}{2}} \left(3 (\alpha +2) x^2-2\right)}{\sqrt{(\alpha +2) (9 \alpha   +8)}}.
\end{equation}
If we compute the limit as $\alpha \to 0$ we obtain
\begin{equation}
\lim_{\alpha \to 0} P_{2}^{g}(x)=\frac{1}{2} \sqrt{\frac{5}{2}} \left(3 x^2-1\right),
\end{equation}
i.e. the Legendre orthonormal polynomial of degree $2$.
\end{example}

\subsection{Inner product approach}

Let $g$ be a derivator and let us consider
\begin{equation}
\langle u,v \rangle_{g}=\int_{[a,b)} u \, v\, d\mu_{g}.
\end{equation}
Let us introduce the moments associated with this inner product for a basis $\{\theta_{n}(x)\}$ of the space of polynomials of degree at most $n$:
\begin{equation}
\mu_{i,j}=\langle \theta_{i}(x) ,\theta_{j}(x)  \rangle_{g} = \int_{[a,b)} \theta_{i}(x) \theta_{j}(x) d\mu_{g}.
\end{equation}
If $\theta_{n}(x)$ is a monic polynomial {of degree $n$}, then with these notations we can introduce the monic $g$-orthogonal polynomials in determinant form by using the moments:
\begin{equation}\label{eq:polmom}
\hat{P}_{n}^{g}(x)=\frac{1}{\Delta_{n}} \begin{vmatrix}
\mu_{0,0} & \mu_{0,1} & \cdots & \mu_{0,n} \\
\mu_{1,0} & \mu_{1,1} & \cdots & \mu_{1,n} \\
\vdots & \vdots &             &\vdots \\
\mu_{n-1,0} & \mu_{n-1,1} & \cdots &\mu_{n-1,n} \\
\theta_{0} & \theta_{1} & \cdots & \theta_{n}
\end{vmatrix},
\end{equation}
where, as usual, \cite{MR0481884}
\begin{equation}
\Delta_{n}=\begin{vmatrix}
\mu_{0,0} & \mu_{0,1} & \cdots & \mu_{0,n-1} \\
\mu_{1,0} & \mu_{1,1} & \cdots & \mu_{1,n-1} \\
\vdots & \vdots &             &\vdots \\
\mu_{n-1,0} & \mu_{n-1,1} & \cdots &\mu_{n-1,n-1}
\end{vmatrix}.
\end{equation}

{The determinantal construction in \eqref{eq:polmom} requires that the corresponding Hankel determinants $\Delta_n$ do not vanish, i.e., that the moment functional is quasi-definite. For the derivator $g_1(x;\alpha)$ with $\alpha>0$ introduced in \eqref{eq:ex1}, this property follows from the positive definiteness of the associated moment functional, as shown by the explicit Hankel determinant formula below.}

\begin{example}
For the derivator $g=g_{1}$ defined in \eqref{eq:ex1} we can prove the following Lemma.
\begin{lemma}
Let $g=g_{1}$ be the derivator defined in \eqref{eq:ex1}, and let $\theta_{n}=x^{n}$. Then, the moments can be explicitly computed as
\begin{equation}
\mu_{0,0}=2+\alpha, \quad \mu_{i,j}=\frac{(-1)^{i+j}+1}{i+j+1}, \quad i+j>0.
\end{equation}	
\end{lemma}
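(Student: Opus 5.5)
The moments are $\mu_{i,j}=\langle x^i,x^j\rangle_g=\int_{[-1,1)}x^{i+j}\,d\mu_g$. The plan is to split $\mu_g=\mu_{g^C}+\mu_{g^B}$, using the decomposition of $g_1$ into its continuous and jump parts.

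For $g=g_1$ the continuous part is $g^C(x)=x$ on $[-1,1]$. Its Lebesgue--Stieltjes measure $\mu_{g^C}$ is therefore Lebesgue measure on $[-1,1)$. The jump part $g^B$ has a single discontinuity at $0$ with $\Delta g(0)=\alpha$, so $\mu_{g^B}=\alpha\,\delta_0$. This is exactly the computation already done for $\langle 1,1\rangle_g$ in the preceding example, and it agrees with the remark identifying $\langle u,v\rangle_g=\int_{-1}^1 uv\,ds+\alpha u(0)v(0)$.

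Applying this decomposition to $u=x^i$, $v=x^j$ gives
\[
\mu_{i,j}=\int_{-1}^{1}s^{i+j}\,ds+\alpha\,0^{i+j}.
\]
The Lebesgue integral equals $\frac{1-(-1)^{i+j+1}}{i+j+1}=\frac{1+(-1)^{i+j}}{i+j+1}$. The point-mass term is $\alpha$ when $i+j=0$ and vanishes when $i+j>0$, because $0^{k}=0$ for $k\ge1$.

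In the case $i=j=0$ this yields $\mu_{0,0}=2+\alpha$. For $i+j>0$ only the Legendre moment survives, which is the stated formula.

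There is no real obstacle. The only point needing care is that the half-open interval $[-1,1)$ contains the atom at $0$ and is not affected by the endpoint $1$, since $g$ is continuous at $\pm1$. This justifies identifying $\mu_g$ on $[-1,1)$ with Lebesgue measure plus $\alpha\delta_0$.
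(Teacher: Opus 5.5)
Your proposal is correct and is essentially the paper's own argument. The paper also writes $\mu_g$ on $[-1,1)$ as Lebesgue measure plus the atom $\alpha\delta_0$, and reads off $\mu_{i,j}=\int_{-1}^1 s^{i+j}\,ds+\alpha\,0^{i+j}$, so that the atom contributes only to $\mu_{0,0}$.
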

\begin{proof}
First of all, we compute
\begin{equation}\label{eq:mom0}
\mu_{0,0}=\langle 1,1 \rangle_{g} = \int_{[-1,1)} 1  d\mu_{g}=\int_{-1}^{1} ds+ \alpha=2+\alpha.
\end{equation}
In general for $i+j> 0$
\begin{equation}\label{eq:momij}
\mu_{i,j}=\langle x^{i},x^{j} \rangle_{g} = \int_{[-1,1)} s^{i+j} d\mu_{g}
=\int_{-1}^{1} s^{i+j} ds+ \left. x^{i+j} \right \vert_{x=0} \alpha =\frac{(-1)^{i+j}+1}{i+j+1}.\qedhere
\end{equation}
\end{proof}

{
\begin{remark}
By using the explicit form of the moments \eqref{eq:mom0} and \eqref{eq:momij}, the Hankel determinants
\[
\Delta_n=\det\left(\mu_{i,j}\right)_{i,j=0}^{n-1}
\]
can be obtained as a rank-one perturbation of the classical Legendre Hankel
determinants. Indeed, let
\begin{equation}\label{eq:hankelc}
H_n=
\left(
\int_{-1}^{1}x^{i+j}\,dx
\right)_{i,j=0}^{n-1}.
\end{equation}
Since the only modified moment is \(\mu_{0,0}\), we have
\[
\left(\mu_{i,j}\right)_{i,j=0}^{n-1}
=
H_n+\alpha e_0e_0^T,
\]
where \(e_0=(1,0,\ldots,0)^T\). Therefore, by the matrix determinant lemma,
\[
\Delta_n
=
\det H_n
\left(
1+\alpha e_0^TH_n^{-1}e_0
\right).
\]
The quantity \(e_0^TH_n^{-1}e_0\) is the Christoffel-Darboux kernel associated
with the monic Legendre polynomials evaluated at \(0\), namely
\[
e_0^TH_n^{-1}e_0=K_{n-1}(0,0).
\]
Hence
\[
\Delta_n
=
\det H_n
\left(
1+\alpha K_{n-1}(0,0)
\right).
\]
Using
\begin{equation}\label{eq:dethankelc}
\det H_n
=
\prod_{k=0}^{n-1}
\frac{2^{2k+1}}{(2k+1)\binom{2k}{k}^2}
\end{equation}
and
\[
K_{n-1}(0,0)
=
2^{1-2n}n^2
\binom{n-1}{\left\lfloor \frac{n-1}{2}\right\rfloor}^{2},
\]
we obtain
\[
\Delta_n
=
\left(
\prod_{k=0}^{n-1}
\frac{2^{2k+1}}{(2k+1)\binom{2k}{k}^2}
\right)
\left(
1+
\alpha\,2^{1-2n}n^2
\binom{n-1}{\left\lfloor \frac{n-1}{2}\right\rfloor}^{2}
\right).
\]
Equivalently,
\[
\Delta_n
=
2^{(n-1)^2}
\left(
2^{2n-1}
+
\alpha n^2
\binom{n-1}{\left\lfloor \frac{n-1}{2}\right\rfloor}^{2}
\right)
\prod_{k=0}^{n-1}
\frac{(k!)^4}{(2k+1)((2k)!)^2}.
\]
In particular,
\[
\Delta_n>0,
\qquad n\geq1,
\]
for every \(\alpha>0\).
\end{remark}
}
	
It is easy to check that the moments $\mu_{i,j}$ converge to the classical moments of Legendre as $\alpha \to 0$. In fact, for this specific example with $g$ defined in \eqref{eq:ex1} the only difference appears at the moment $\mu_{0,0}$.
	
Therefore, the monic $g$-Legendre orthogonal polynomials can be introduced in determinant form by using the moments
\begin{equation}
\hat{P}_{n}^{g}(x)=\frac{1}{\Delta_{n}^{g}} \begin{vmatrix}
\mu_{0,0} & \mu_{0,1} & \cdots & \mu_{0,n} \\
\mu_{1,0} & \mu_{1,1} & \cdots & \mu_{1,n} \\
\vdots & \vdots &             &\vdots \\
\mu_{n-1,0} & \mu_{n-1,1} & \cdots &\mu_{n-1,n} \\
1 & x & \cdots & x^{n}
\end{vmatrix}.
\end{equation}

We obtain the very same orthogonal polynomials as computed before, up to {an} appropriate normalization. For instance
\begin{align}
\hat{P}_{1}^{g}(x)&= x=\sqrt{\frac{2}{3}}P_{1}^{g}(x), \\
\hat{P}_{2}^{g}(x)&=x^2-\frac{2}{3 (\alpha +2)}=\frac{\sqrt{\frac{2}{5}} \sqrt{(\alpha +2) (9 \alpha +8)}}{3 (\alpha +2)} P_{2}^{g}(x), \\
\hat{P}_{3}^{g}(x)&= x^3-\frac{3 x}{5}= \frac{2 \sqrt{\frac{2}{7}}}{5} P_{3}^{g}(x).
\end{align}
\end{example}

\subsection{Recurrence relation}

If $\mu$ is a measure supported on the real line such that for the corresponding inner product $\langle x f(x),g(x) \rangle = \langle f(x),xg(x) \rangle$, then the sequence of orthonormal polynomials $\{p_{n}(x)\}_{n}$ satisfy a three term recurrence relation
\begin{equation}
x p_{n}(x)=a_{n+1} p_{n+1}(x)+b_{n} p_{n}(x)+a_{n}p_{n-1}(x), \quad n \geq 0 ,
\end{equation}
with
\begin{equation}
p_{-1}(x):=0, \quad p_{0}(x)=\frac{1}{\left(\int_{\supp \mu} d\mu(s) \right)^{1/2}},
\end{equation}
where
\begin{equation}
a_{n}=\int_{\supp \mu} x p_{n-1}(x)p_{n}(x)d\mu(x), \quad b_{n}=\int_{\supp \mu} x p_{n}^{2}(x) d\mu(x).
\end{equation}
For inner products that  $\langle x f(x),g(x) \rangle \neq \langle f(x),xg(x) \rangle$  this type of recurrence relation is {no} longer true (see{, e.g.,} \cite{MARCELLAN1996245}).

\begin{example}
Let $g$ be the derivator defined in \eqref{eq:ex1}. Then,
\begin{equation}
\langle u,v \rangle_{g}=\int_{[-1,1)} u \, v\, d\mu_{g}=\int_{-1}^{1} u(s)v(s)ds+\alpha u(0) v(0).
\end{equation}
Let $\{\hat{P}_{n}^{g}(x)\}_{n}$ be the sequence of monic orthogonal polynomials with respect to the above inner product, and let $\{\hat{P}_{n}(x)\}_{n}$ be the sequence of classical monic Legendre polynomials. It is easy to prove that the sequence $\{\hat{P}_{n}^{g}(x)\}_{n}$ satisfies a three-term relation of the form
\begin{equation}\label{eq:ttrrp}
x  \hat{P}_{n}^{g}(x)=  \hat{P}_{n+1}^{g}(x) + u_{n}  \hat{P}_{n-1}^{g}(x).
\end{equation}
The result follows from the expansion
\begin{equation}
x  \hat{P}_{n}^{g}(x) = \hat{P}_{n+1}^{g}(x)  + \sum_{j=0}^{n} u_{j,n} \hat{P}_{j}^{g}(x).
\end{equation}
The Fourier coefficients are
\begin{equation}
u_{j,n}=\frac{\langle x \hat{P}_{n}^{g}(x),\hat{P}_{j}^{g}(x) \rangle_{g}}{\langle \hat{P}_{j}^{g}(x) ,\hat{P}_{j}^{g}(x) \rangle_{g}}.
\end{equation}
In this very specific situation we have that
\begin{equation}
\langle x \hat{P}_{n}^{g}(x),\hat{P}_{j}^{g}(x) \rangle_{g} = \langle  \hat{P}_{n}^{g}(x),x \hat{P}_{j}^{g}(x) \rangle_{g},
\end{equation}
and the result follows from the orthogonality and symmetry of the inner product. It is important to notice that in general the latter property is {no} longer true.

Following \cite{manolo} we can derive some further properties in this case. If we expand the monic $g$-Legendre polynomials in terms of classical monic Legendre polynomials
\begin{equation}
\hat{P}_{n}^{g}(x)=\hat{P}_{n}(x)+\sum_{j=0}^{n-1} C_{j,n} \hat{P}_{j}(x),
\end{equation}
then
\begin{equation}
C_{j,n}=\frac{\int_{-1}^{1} \hat{P}_{n}^{g}(x)  \hat{P}_{j}(x) dx}{\int_{-1}^{1}  \hat{P}_{j}(x)  \hat{P}_{j}(x) dx}=-\frac{\alpha \hat{P}_{n}^{g}(0) \hat{P}_{j}(0)}{ \|  \hat{P}_{j}(x) \|^{2}}, \quad 0 \leq j \leq n-1,
\end{equation}
by using the orthogonality of $Q_{n}$ with respect to $P_{j}$, where the explicit value of the norm
\begin{equation}\label{eq:norm}
\|  \hat{P}_{j}(x) \|^{2}=\frac{2^{2 j+1}}{(2 j+1) \binom{2 j}{j}^2}.
\end{equation}
If we introduce the kernels associated to the Legendre polynomials
\begin{equation}
K_{n}(x,y)=\sum_{j=0}^{n} \frac{\hat{P}_{j}(x) \hat{P}_{j}(y)}{\|  \hat{P}_{j}(x) \|^{2}}
\end{equation}
then we have
\begin{equation}\label{eq:auxpi}
\hat{P}_{n}^{g}(x)=\hat{P}_{n}(x)-\alpha \hat{P}_{n}^{g}(0) K_{n-1}(x,0).
\end{equation}
In particular
\begin{equation}\label{eq:aux3}
\hat{P}_{n}^{g}(0)=\hat{P}_{n}(0)-\alpha \hat{P}_{n}^{g}(0) K_{n-1}(0,0),
\end{equation}
where we can compute explicitly
\begin{equation}\label{eq:kernel0}
K_{n}(0,0)=2^{-2 n-1} (n+1)^2 \binom{n}{\left\lfloor \frac{n}{2}\right\rfloor }^2 >0.
\end{equation}
From \eqref{eq:aux3}, by using
\begin{equation}\label{eq:leg0}
\hat{P}_{n}(0)=\frac{\sqrt{\pi } \Gamma \left(\frac{n+1}{2}\right)}{\Gamma \left(\frac{1}{2}-\frac{n}{2}\right) \Gamma \left(n+\frac{1}{2}\right)},
\end{equation}
we obtain the explicit value
\begin{equation}\label{eq:aux4}
\hat{P}_{n}^{g}(0)=\frac{\hat{P}_{n}(0)}{1+\alpha K_{n-1}(0,0)}=\frac{\sqrt{\pi } 8^n}{\binom{2 n}{n} \Gamma \left(\frac{1}{2}-\frac{n}{2}\right) \Gamma \left(\frac{n}{2}+1\right) \left(2 \alpha  n^2 \binom{n-1}{\left\lfloor \frac{n-1}{2}\right\rfloor
   }^2+4^n\right)},
\end{equation}
so that \eqref{eq:auxpi} can be written as
\begin{equation}\label{eq:auxf}
\hat{P}_{n}^{g}(x)=\hat{P}_{n}(x)-\alpha \frac{\hat{P}_{n}(0)}{1+\alpha K_{n-1}(0,0)} K_{n-1}(x,0).
\end{equation}

Finally, we shall compute explicitly the coefficients $u_{n}$ in {the} three-term recurrence relation \eqref{eq:ttrrp} for the monic $g$-Legendre polynomials.
In doing so, we substitute three times \eqref{eq:auxf} into \eqref{eq:ttrrp} in order to obtain
\begin{multline}\label{eq:auxss}
x \left( \hat{P}_{n}(x)-\alpha \frac{\hat{P}_{n}(0)}{1+\alpha K_{n-1}(0,0)} K_{n-1}(x,0) \right) \\ =\left[ \hat{P}_{n+1}(x)-\alpha \frac{\hat{P}_{n+1}(0)}{1+\alpha K_{n}(0,0)} K_{n}(x,0) \right] \\
+u_{n} \left \{ \hat{P}_{n-1}(x)-\alpha \frac{\hat{P}_{n-1}(0)}{1+\alpha K_{n-2}(0,0)} K_{n-2}(x,0) \right \}.
\end{multline}
Now, we use the three-term recurrence relation for monic classical Legendre polynomials
\begin{equation}
x \hat{P}_{n}(x)=\hat{P}_{n+1}(x)+\frac{n^{2}}{(2n-1)(2n+1)} \hat{P}_{n-1}(x)
\end{equation}
and the explicit expression of the kernels, so that the left-hand side reads
\begin{multline}\label{eq:auxkk}
x \left( \hat{P}_{n}(x)-\alpha \frac{\hat{P}_{n}(0)}{1+\alpha K_{n-1}(0,0)} K_{n-1}(x,0) \right) \\
=\hat{P}_{n+1}(x)+\frac{n^{2}}{(2n-1)(2n+1)} \hat{P}_{n-1}(x) \\
- \alpha \frac{\hat{P}_{n}(0)}{1+\alpha K_{n-1}(0,0)} \sum_{j=0}^{n-1} \frac{ \hat{P}_{j+1}(x)+\frac{j^{2}}{(2j-1)(2j+1)} \hat{P}_{j-1}(x)  }{\| \hat{P}_{j}(x) \|^{2}} \hat{P}_{j}(0).
\end{multline}
Since
\begin{equation}
\hat{P}_{n}(x)=x^{n}+\frac{(n-1) n}{2(1-2 n)} x^{n-2}+\cdots,
\end{equation}
by equating the coefficient in $x^{n-1}$ in \eqref{eq:auxss} and taking into account \eqref{eq:auxkk}, we obtain the explicit expression for the coefficients $u_{n}$ in the three-term recurrence relation \eqref{eq:ttrrp}
\begin{equation}
u_{n}=\frac{n^{2}}{4n^{2}-1}+\alpha \left( \frac{ \hat{P}_{n-1}(0) \hat{P}_{n+1}(0)}{(1+\alpha K_{n}(0,0)) \| \hat{P}_{n-1}(x) \|^{2}} -\frac{ \hat{P}_{n-2}(0) \hat{P}_{n}(0)}{(1+\alpha K_{n-1}(0,0)) \| \hat{P}_{n-2}(x) \|^{2}}\right).
\end{equation}
Since all the values in the latter expression have been explicitly presented in \eqref{eq:norm}, \eqref{eq:kernel0}, and \eqref{eq:leg0}, we can also write
\begin{multline}
u_{n}=-\frac{\pi  \alpha  4^n n (2 n-3) \binom{2 (n-2)}{n-2}}{\binom{2 n}{n} \Gamma \left(\frac{1}{2}-\frac{n}{2}\right) \Gamma \left(\frac{3}{2}-\frac{n}{2}\right) \Gamma \left(\frac{n}{2}+1\right)^2 \left(2 \alpha  n^2 \binom{n-1}{\left\lfloor \frac{n-1}{2}\right\rfloor }^2+4^n\right)}+ \\
n \left(\frac{n}{4 n^2-1}-\frac{\sqrt{\pi } \alpha  16^n \Gamma \left(n+\frac{1}{2}\right)}{\binom{2 (n+1)}{n+1} \Gamma \left(1-\frac{n}{2}\right)^2 \Gamma (n) \Gamma \left(\frac{n+1}{2}\right)    \Gamma \left(\frac{n+3}{2}\right) \left(\alpha  (n+1)^2 \binom{n}{\left\lfloor \frac{n}{2}\right\rfloor }^2+2^{2 n+1}\right)}\right).
\end{multline}
As a consequence, we can generate the monic orthogonal polynomials $\hat{P}_{n}^{g}(x)$, giving rise e.g. to
\begin{align}
&\hat{P}_{-1}(x)=0, \quad \hat{P}_{0}(x)=1, \quad \hat{P}_{1}(x)=x, \quad \hat{P}_{2}(x)=x^2-\frac{2}{3 (\alpha +2)}, \\
&\hat{P}_{3}(x)=x^3-\frac{3 x}{5}, \\
&\hat{P}_{4}(x)=\frac{35 (9 \alpha +8) x^4-15 (15 \alpha +16) x^2+24}{35 (9 \alpha +8)}, \\
&\hat{P}_{5}(x)=x^5-\frac{10 x^3}{9}+\frac{5 x}{21}, \\
&\hat{P}_{6}(x)=\frac{21 x^2 \left(11 (225 \alpha +128) x^4-30 (105 \alpha +64) x^2+875 \alpha +640\right)-640}{231 (225 \alpha +128)},
\end{align}
where we observe a simple fact from the definition of the inner product: the odd polynomials coincide with classical monic Legendre polynomials.

In Figure \ref{figure1} we show the first five monic orthogonal $g$-Legendre polynomials $\hat{P}_{n}^{g}(x)$ for $\alpha=1$.

\begin{figure}[ht!]
\centering
\includegraphics[scale=0.9]{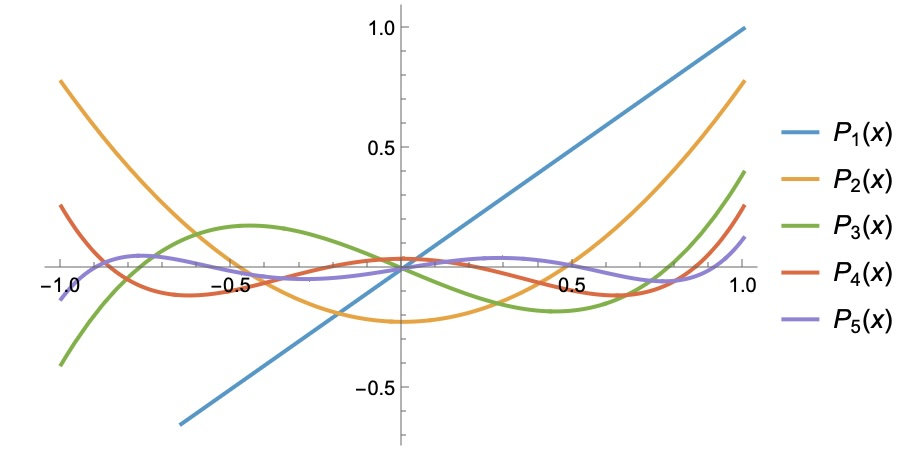}
\caption{The first five monic orthogonal $g$-Legendre polynomials $\hat{P}_{n}^{g}(x)$ for $\alpha=1$. It is possible to observe the interlacing property of the zeros.} \label{figure1}
\end{figure}

\end{example}

\subsection{Second-order differential equation}

According to \cite[Section 8]{survey}, solutions of the second-order differential equation
\begin{equation}
(a_{0}+a_{1}x+a_{2}x^{2}) y''(x)+(b_{0}+b_{1}x)y'(x)+\lambda y(x)=0, \qquad a_{i},b_{i},\lambda \in {\mathbf{R}}
\end{equation}
do not lead to new families of Sobolev orthogonal polynomials. In our case, since polynomials of odd degree do coincide with classical Legendre polynomials, we have that
\begin{equation}
(1-x^{2}) {\frac{d^{2}}{dx^{2}}} P_{2n+1}^{g}(x) -2x \frac{d}{dx} P_{2n+1}^{g}(x)+(2n+1)(2n+2) P_{2n+1}^{g}(x)=0, \quad n=0,1,\dots.
\end{equation}
For polynomials of even degree, we can obtain a differential equation following \cite{manolo}. In this case, we obtain a second order differential equation, but the polynomials are of degrees 5, 4, and 3, respectively, and they depend on $n$. More precisely
\begin{equation}
\sigma_{5}(x;n) {\frac{d^{2}}{dx^{2}}} P_{2n}^{g}(x) +\tau_{4}(x;n) \frac{d}{dx} P_{2n}^{g}(x)+\lambda_{3}(x;n) P_{2n}^{g}(x)=0, \quad n=1,2,\dots,
\end{equation}
where
\begin{multline}
\sigma_{5}(x;n)=16 x \left(x^2-1\right) \left(-\alpha ^2 2^{24 n} n^4 \Gamma (n)^2 \Gamma \left(2 n+\frac{1}{2}\right)^4 \right. \\
\left. -x^2 \binom{4 n}{2 n}^4 \Gamma \left(\frac{1}{2}-n\right)^4 \Gamma (n+1)^6
   \Gamma (2 n+1)^4 \left(8 \alpha  n^2 \binom{2 n-1}{\left\lfloor n-\frac{1}{2}\right\rfloor }^2+16^n\right)^2 \right. \\
   \left. +\pi  \alpha  \left(-2^{4 n+3}\right) (1-4 n)^2 n^6 \binom{4 n}{2 n}^2
   \binom{4 n-2}{2 n-1}^2 \left((4 n+1) x^2-1\right) \Gamma \left(\frac{1}{2}-n\right)^2 \Gamma (n)^4 \right. \\
   \left.  \Gamma (2 n)^4 \left(8 \alpha  n^2 \binom{2 n-1}{\left\lfloor    n-\frac{1}{2}\right\rfloor }^2+16^n\right)\right),
\end{multline}
\begin{multline}
\tau_{4}(x;n)=-\alpha ^2 2^{24 n+5} n^4 \left(2 x^2-1\right) \Gamma (n)^2 \Gamma \left(2 n+\frac{1}{2}\right)^4 \\
-16 \binom{4 n}{2 n}^2 \Gamma \left(\frac{1}{2}-n\right)^2 \Gamma (n+1)^4 \left(8   \alpha  n^2 \binom{2 n-1}{\left\lfloor n-\frac{1}{2}\right\rfloor }^2+16^n\right) \\
\times \left(2 x^4 \binom{4 n}{2 n}^2 \Gamma \left(\frac{1}{2}-n\right)^2 \Gamma (n+1)^2 \Gamma (2 n+1)^4
   \left(8 \alpha  n^2 \binom{2 n-1}{\left\lfloor n-\frac{1}{2}\right\rfloor }^2+16^n\right) \right. \\
   \left. +\pi  \alpha  16^n \left((4 n+1) x^4-2 x^2+1\right) \Gamma (4 n+1)^2\right),
\end{multline}
and
\begin{multline}
\lambda_{3}(x;n)=32 n (2 n+1) x \left(\alpha ^2 2^{24 n} (n-1) n \Gamma (n+1)^2 \Gamma \left(2 n+\frac{1}{2}\right)^4 \right. \\
\left. +x^2 \binom{4 n}{2 n}^4 \Gamma \left(\frac{1}{2}-n\right)^4 \Gamma (n+1)^6 \Gamma
   (2 n+1)^4 \left(8 \alpha  n^2 \binom{2 n-1}{\left\lfloor n-\frac{1}{2}\right\rfloor }^2+16^n\right)^2 \right. \\
   \left. +\pi  \alpha  2^{4 n+3} (1-4 n)^2 n^6 \binom{4 n}{2 n}^2 \binom{4 n-2}{2 n-1}^2
   \left((4 n+1) x^2-3\right) \Gamma \left(\frac{1}{2}-n\right)^2 \Gamma (n)^4 \Gamma (2 n)^4  \right. \\
   \left. \left(8 \alpha  n^2 \binom{2 n-1}{\left\lfloor n-\frac{1}{2}\right\rfloor}^2+16^n\right)\right).
\end{multline}

\section{$g$-orthogonal $g$-polynomials}\label{sec:gogp}

In contrast with Section~\ref{sec:gop} (where we worked with ordinary polynomials and the classical product) our objective in this section is to set up an intrinsic orthogonality on the space of $g$-polynomials themselves, i.e. on linear combinations of the $g$-monomials.

A fundamental obstacle is that the usual pointwise product does not preserve the space $\mathbb P_g$.  As already noted in Remark~\ref{remark:1}, the product of two $g$-monomials is not, in general, a single $g$-monomial:
\[
 g_{x_{0},n}(x)g_{x_{0},m}(x)\neq g_{x_{0},n+m}(x).
\]
More strongly, the pointwise product of two $g$-monomials need not be $g$-continuously $g$-differentiable.  By contrast, each individual $g$-monomial is infinitely $g$-differentiable; see~\cite{Cora}.  Moreover, let
\begin{equation}
P_{n}(x;g)=c_{n,n} g_{x_{0},n}(x)+\sum_{j=0}^{n-1} c_{n,j} g_{x_{0},j}(x), \quad n=0,1,2,\dots,
\end{equation}
be a sequence of $g$-polynomials. If we try to expand
\begin{equation}
g_{x_{0},1}(x)P_{n}(x;g)=\sum_{j=0}^{n+1} d_{n,j} g_{x_{0},j}(x),
\end{equation}
we observe that it is impossible to find the coefficients $d_{n,j}$ even for a simple case as \eqref{eq:ex1}, that is, ${\mathbb P}_{g}$ is not closed under the usual product of functions. Therefore, we need to introduce a new operation in the set of $g$-polynomials ${\mathbb P}_{g}$.

For this reason we introduce a product $\star$, under which the class of $g$-polynomials is stable, and we develop the corresponding orthogonality.  A coefficientwise product on functions is meaningful only when the chosen $g$-monomial expansion is unique.  Accordingly, from this point onward we restrict to pairs $(g,x_0)$ for which $\{g_{x_0,n}\}_{n\ge0}$ is linearly independent.  This hypothesis is formalized as \emph{admissibility} in Definition~\ref{def:admissible}; Proposition~\ref{prop:admissible} gives a simple sufficient criterion, and every derivator used in the explicit constructions below is verified to be admissible.  Under this standing restriction, each element of $\mathbb P_g$ has a unique finite coefficient expansion, so the following definition is unambiguous.

The remaining sections first identify the exact algebraic status of this product (Section~\ref{sec:transfer}), then classify the moment functionals generated by derivators (Section~\ref{sec:image}), and finally develop explicit realizations and the complete one-jump model (Sections~\ref{sec:dictionary} and~\ref{sec:model}).

\begin{definition}
Let $x_{0} \in {\mathbf{R}}$ and $n,m \in \{0,1,\dots \}$. We define
\begin{equation}
g_{x_{0},n}(x) \star g_{x_{0},m}(x):=g_{x_{0},n+m}(x).
\end{equation}
It is possible to extend this operation to ${\mathbb P}_{g}$ as
\begin{equation}
{\left( \sum_{i=0}^{n} a_{i} g_{x_{0},i}(x) \right) \star \left( \sum_{k=0}^{m} b_{k} g_{x_{0},k}(x) \right) := \sum_{\ell=0}^{n+m}\left(\sum_{i=\max\{0,\ell-m\}}^{\min\{n,\ell\}} a_i b_{\ell-i}\right)g_{x_0,\ell}(x).}
\end{equation}
\end{definition}

From this point onward we write
\begin{equation}\label{eq:star-bilinear-form}
\langle u,v\rangle_S:=\int_{[a,b)}(u\star v)\,d\mu_g.
\end{equation}
Because $\star$ is bilinear and commutative, $\langle\cdot,\cdot\rangle_S$ is a symmetric bilinear form on $\mathbb P_g$.  It is important not to call it an inner product at this level of generality: Section~\ref{sec:image} proves that it need not be positive semidefinite.  We reserve the term ``inner product'' for those cases in which the associated moment functional is positive definite.

\begin{definition}
Let us consider the basis of $g$-monomials introduced in Definition \ref{def:gmonomials}. Let
\begin{equation}\label{eq:expansion}
P_{n}(x;g)=c_{n,n} g_{x_{0},n}(x)+\sum_{j=0}^{n-1} c_{n,j} g_{x_{0},j}(x), \quad n=0,1,2,\dots,
\end{equation}
be a sequence of $g$-polynomials. We say that $\{P_{n}(x;g)\}_{n}$ is a \emph{sequence of $g$-orthogonal $g$-polynomials} on $[a,b)$ if there exist nonzero real constants $h_n$ such that
\begin{equation}\label{eq:star-orthogonality-general}
\langle P_{n}(x;g),P_{m}(x;g) \rangle_{S}=\int_{[a,b)} \left( P_{n}(s;g) \star P_{m}(s;g) \right) d\mu_{g}(s)=h_n \delta_{n,m}.
\end{equation}
No sign is imposed on $h_n$ in the quasi-definite but non-positive case.  When the form \eqref{eq:star-bilinear-form} is positive definite, each $h_n$ is positive and the usual normalization $h_n=1$ defines a sequence of \emph{$g$-orthonormal $g$-polynomials}.  In the explicit positive model of Section~\ref{sec:model} we retain the historical notation $d_n^2=h_n$; outside the positive region, a superscript ``2'' must be regarded only as notation and not as an assertion that the constant is positive.

We say that $\{P_{n}(x;g)\}_{n}$ is a sequence of \emph{monic $g$-polynomials} if $c_{n,n}=1$. In this case, we shall use the notation $\{\hat{P}_{n}(x;g)\}_{n}$.
\end{definition}

\section{The transfer principle}\label{sec:transfer}

The construction of Section~\ref{sec:gogp} raises an immediate structural question: is the $\star$-orthogonality a genuinely new orthogonality, or is it a classical orthogonality written in an unusual basis? The purpose of this section is to answer that question completely. The answer is the second alternative, and we state it in the strongest form we can, because it is precisely this identification that makes the image analysis of Section~\ref{sec:image} possible.

Throughout this section $g$ is a derivator, $x_{0}\in\mathbf{R}$ is a fixed centre, and $[a,b)$ is the interval of orthogonality. We work under the following hypothesis.

\begin{definition}\label{def:admissible}
We say that the pair $(g,x_{0})$ is \emph{admissible} if the family of $g$-monomials $\{g_{x_{0},n}\}_{n\geq0}$ is linearly independent in the space of real functions.
\end{definition}

Admissibility is not a technical convenience: it is a transparent sufficient condition under which the coefficientwise definition of the $\star$-product is unambiguous and under which the transfer map below is an isomorphism. Without linear independence, a function may admit several coefficient representations, and the bilinear prescription is not automatically independent of that choice; one would then have to pass to an appropriate quotient and prove separately that the relevant kernel is an ideal. We do not pursue that quotient construction here. Admissibility is, however, a mild restriction in the situations considered below, as the following observation shows.

\begin{proposition}\label{prop:admissible}
Let $g$ be a derivator and $x_{0}\in\mathbf{R}$, and suppose that the $g$-derivative of every $g_{x_{0},n}$ is defined at every point of the domain under consideration, so that \eqref{eq:dgm} holds there. Then $(g,x_{0})$ is admissible.
\end{proposition}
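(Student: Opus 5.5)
We argue by differentiation, exactly as one shows that the ordinary monomials are linearly independent. Suppose there were a nontrivial finite relation
\[
\sum_{k=0}^{N} a_k\, g_{x_0,k}(x)=0\qquad\text{for every } x \text{ in the domain},
\]
and let $N$ be chosen so that $a_N\neq 0$. The relation asserts that the function $F=\sum_{k} a_k g_{x_0,k}$ is identically zero. By hypothesis each $g_{x_0,k}$ is $g$-differentiable at every point, and \eqref{eq:dgm} holds there. The $g$-derivative is a limit of difference quotients, so it is linear wherever the individual derivatives exist. The zero function has $g$-derivative zero at every point, because all of its difference quotients vanish.

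Applying $D_g$ once therefore gives $\sum_{k=1}^{N} k a_k\, g_{x_0,k-1}\equiv 0$. This is again a relation of the same type, now with top coefficient $N a_N\neq0$ at degree $N-1$. Iterating $N$ times, by induction on the degree, yields $N!\,a_N\, g_{x_0,0}\equiv 0$. Since $g_{x_0,0}\equiv1$, this forces $a_N=0$, a contradiction. Hence no nontrivial relation exists and $(g,x_0)$ is admissible.

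The main point to check is that repeated differentiation is legitimate at every step. We need $D_g$ applied to a finite combination of $g$-monomials to equal the corresponding combination of $D_g$'s, and the derivative of the identically zero function to be zero. Both follow from the definition of $f'_g$ as a limit, since the limit of a finite sum is the sum of the limits. This is exactly where the hypothesis that $(g_{x_0,n})'_g$ is defined at every point of the domain is used.

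A small subtlety is that the limit defining $f'_g(x)$ must be taken over a nonempty approach set. In the cases listed in the definition of $f'_g$, the limits are taken where $g(y)\neq g(x)$. This is implicit in the assumption that the derivatives exist. Finally, the relation needs to hold only on the domain under consideration, and the argument never leaves that domain. So linear independence of the restrictions follows, and with it linear independence in the space of real functions.
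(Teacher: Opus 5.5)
Your proposal is correct and is essentially the paper's argument: apply $D_g$ repeatedly to the assumed relation, use $(g_{x_0,n})'_g=n\,g_{x_0,n-1}$ together with linearity, and after $N$ steps conclude $N!\,a_N=0$. Your extra remarks (linearity of $D_g$ on finite combinations, the derivative of the zero function, and restriction to the domain) only make explicit what the paper's proof uses tacitly.
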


\begin{proof}
Suppose $\sum_{n=0}^{N}\lambda_{n}g_{x_{0},n}=0$ with $\lambda_{N}\neq0$. Apply $D_g$ successively. For every index $n<N$, after $n+1$ applications one reaches
\[
D_g^{n+1}g_{x_0,n}=D_g(n!g_{x_0,0})=0,
\]
so that term remains identically zero under all subsequent applications. The top-degree term survives through the first $N$ differentiations and satisfies
\[
D_g^N g_{x_0,N}=N!g_{x_0,0}=N!.
\]
Consequently, after applying $D_g^N$ to the assumed relation, all terms of degree below $N$ have vanished and we obtain
\[
0=\lambda_N N!.
\]
This contradicts $\lambda_N\neq0$. Hence no nontrivial finite linear relation among the $g$-monomials exists, and $(g,x_0)$ is admissible.
\end{proof}

Proposition~\ref{prop:admissible} is a sufficient criterion for admissibility; it is not intended as a classification of all possible failures of linear independence.  In the sequel admissibility is always stated explicitly.  In the concrete derivators used below it follows either from Proposition~\ref{prop:admissible} or directly from the existence of an interval on which the restrictions of $g_{x_0,n}$ have distinct polynomial degrees.

\begin{theorem}[Transfer principle: algebraic part]\label{thm:transfer-alg}
Let $(g,x_{0})$ be admissible and let
\begin{equation}\label{eq:Phi}
\Phi:\mathbf{R}[t]\longrightarrow\mathbb{P}_{g},\qquad
\Phi\Bigl(\sum_{k=0}^{N}a_{k}t^{k}\Bigr)=\sum_{k=0}^{N}a_{k}\,g_{x_{0},k}.
\end{equation}
Then $\Phi$ is an isomorphism of unital commutative algebras from $(\mathbf{R}[t],\cdot)$ onto $(\mathbb{P}_{g},\star)$, it preserves degrees and leading coefficients, and it intertwines the ordinary derivative with the Stieltjes derivative:
\begin{equation}\label{eq:intertwine}
\Phi(pq)=\Phi(p)\star\Phi(q),\qquad
D_{g}\Phi(p)=\Phi(p'),\qquad p,q\in\mathbf{R}[t].
\end{equation}
Equivalently, $\Phi^{-1}D_{g}\Phi=d/dt$, so that $(\mathbb{P}_{g},\star,D_{g})$ and $(\mathbf{R}[t],\cdot,d/dt)$ are isomorphic as differential algebras.
\end{theorem}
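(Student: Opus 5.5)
The plan is to verify each claim directly from the definitions, using admissibility only to make $\Phi$ well defined as a map onto $\mathbb{P}_g$ with a well-defined inverse.

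\emph{Bijectivity.} Since $\{t^k\}$ is a basis of $\mathbf{R}[t]$ and $\Phi$ sends $t^k$ to $g_{x_0,k}$, $\Phi$ is a well-defined linear map. Its image is the span $\mathbb{P}_g$, so it is surjective. Injectivity is exactly Definition~\ref{def:admissible}: a nonzero $p$ with $\Phi(p)=0$ would be a nontrivial finite linear relation among the $g$-monomials. Because the monomial expansion in $\mathbb{P}_g$ is unique, the degree and leading coefficient of a $g$-polynomial (read off from \eqref{eq:expansion}) are well defined. With this, preservation of degrees and leading coefficients is immediate from $\Phi(t^k)=g_{x_0,k}$.

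\emph{Multiplicativity.} By bilinearity of both products it suffices to check basis elements, and there
\[
\Phi(t^nt^m)=g_{x_0,n+m}=g_{x_0,n}\star g_{x_0,m}.
\]
The general coefficient formula defining $\star$ is precisely the Cauchy product of coefficient sequences. Hence $\Phi(pq)=\Phi(p)\star\Phi(q)$ holds for general $p,q$. Commutativity and associativity of $\star$, and the fact that $g_{x_0,0}=1$ is its unit, are then transported from $\mathbf{R}[t]$ through the bijection. This also shows that $(\mathbb{P}_g,\star)$ is a unital commutative algebra in the first place.

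\emph{Intertwining with derivatives.} We use the key property \eqref{eq:dgm}, $D_g g_{x_0,n}=n g_{x_0,n-1}$, together with linearity of the Stieltjes derivative. These give $D_g\Phi(t^n)=n g_{x_0,n-1}=\Phi(nt^{n-1})=\Phi((t^n)')$ for $n\ge1$, and $D_g 1=0=\Phi(0)$ for $n=0$. Extending by linearity gives $D_g\Phi(p)=\Phi(p')$. In particular $D_g$ maps $\mathbb{P}_g$ into itself, and $\Phi^{-1}D_g\Phi=d/dt$ follows on composing with $\Phi^{-1}$.

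\emph{Main obstacle.} The delicate point is the domain on which \eqref{eq:dgm} and the linearity of $D_g$ hold. The $g$-derivative is defined only under the endpoint restrictions on $[a,b]$, and pointwise only where the relevant limits exist. I would therefore interpret $D_g$ on $\mathbb{P}_g$ as the operator defined on the monomials by \eqref{eq:dgm}, extended linearly, which is legitimate since pointwise limits are linear wherever they exist. Admissibility makes this extension unambiguous on $\mathbb{P}_g$. Finally, I would note that the identity $D_g\Phi(p)=\Phi(p')$ is an equality of functions on the set where \eqref{eq:dgm} is valid, as in \cite{Cora}.
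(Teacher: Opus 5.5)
Your proposal is correct and follows the paper's proof essentially step by step. The steps match: surjectivity from the definition of $\mathbb{P}_g$ and injectivity from admissibility; multiplicativity checked on monomials and extended by bilinearity; and $D_g\Phi(p)=\Phi(p')$ from \eqref{eq:dgm} plus linearity. Your extra remarks (transporting associativity and the unit, and the caveat about where \eqref{eq:dgm} holds pointwise) are points the paper leaves implicit, and they do not change the argument.
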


\begin{proof}
The map $\Phi$ is surjective by the definition of $\mathbb{P}_{g}$ and injective by admissibility, hence a linear isomorphism; it sends $t^{n}$ to a $g$-polynomial of degree $n$ with leading coefficient $1$, so it preserves degrees and leading coefficients. On monomials,
\[
\Phi(t^{i}t^{j})=\Phi(t^{i+j})=g_{x_{0},i+j}=g_{x_{0},i}\star g_{x_{0},j}=\Phi(t^{i})\star\Phi(t^{j}),
\]
by the definition of $\star$, and the first identity in \eqref{eq:intertwine} follows by bilinearity. For the second, \eqref{eq:dgm} gives $(g_{x_{0},n})'_{g}=n\,g_{x_{0},n-1}$, whence
\[
D_{g}\Phi(t^{n})=n\,g_{x_{0},n-1}=\Phi\bigl(nt^{n-1}\bigr)=\Phi\Bigl(\frac{d}{dt}t^{n}\Bigr),
\]
and again linearity concludes.
\end{proof}

The analytic half of the transfer principle identifies the $\star$-bilinear form. Its proof is nothing but the fundamental theorem of calculus for Stieltjes integrals, but the resulting formula is the tool on which the image theorem of Section~\ref{sec:image} rests, so we isolate it.

\begin{theorem}[Transfer principle: analytic part]\label{thm:transfer-an}
Let $(g,x_{0})$ be admissible, let $a<b$ with $a,b\notin C_{g}$, and set
\begin{equation}\label{eq:master-moments}
m_{n}:=\int_{[a,b)}g_{x_{0},n}\,d\mu_{g},\qquad n\geq0.
\end{equation}
Then
\begin{equation}\label{eq:master}
m_{n}=\frac{g_{x_{0},n+1}(b)-g_{x_{0},n+1}(a)}{n+1},\qquad n\geq0,
\end{equation}
and, denoting by $\mathcal{L}$ the moment functional on $\mathbf{R}[t]$ defined by $\mathcal{L}(t^{n})=m_{n}$,
\begin{equation}\label{eq:pullback}
\langle \Phi(p),\Phi(q)\rangle_{S}=\mathcal{L}(pq),\qquad p,q\in\mathbf{R}[t].
\end{equation}
\end{theorem}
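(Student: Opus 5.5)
The plan is to derive \eqref{eq:master} directly from the recursive definition \eqref{eq:gmonomials}, with no appeal to a general fundamental theorem beyond additivity of the Lebesgue--Stieltjes integral over intervals. By Definition~\ref{def:gmonomials}, for every $x\in\mathbf R$ we have $g_{x_0,n+1}(x)=(n+1)\int_{x_0}^{x}g_{x_0,n}\,d\mu_g$, where the oriented integral is interpreted via the convention of Definition~\ref{def:mug}. The first step is to check that this oriented integral is additive:
\[
\int_{x_0}^{b}f\,d\mu_g-\int_{x_0}^{a}f\,d\mu_g=\int_{[a,b)}f\,d\mu_g
\]
for $a<b$. This has to be verified separately in the three cases $x_0\le a<b$, $a<x_0\le b$ and $a<b<x_0$. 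In each case it is just disjoint additivity of $\mu_g$ on half-open intervals: $[x_0,b)=[x_0,a)\sqcup[a,b)$, $[a,b)=[a,x_0)\sqcup[x_0,b)$, and $[a,x_0)=[a,b)\sqcup[b,x_0)$ respectively. We also need integrability of $g_{x_0,n}$ on $[a,b)$. For this, an induction shows that each $g_{x_0,n}$ is bounded on bounded sets: $g_{x_0,0}=1$, and $|g_{x_0,n+1}(x)|\le (n+1)\sup|g_{x_0,n}|\,\mu_g$ of a bounded interval. Since $\mu_g$ is finite on bounded sets, the integrals are finite. Combining these facts and dividing by $n+1$ yields \eqref{eq:master}.

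The hypothesis $a,b\notin C_g$ plays no essential role in this identity. Its purpose is to ensure that the orthogonality interval is one on which the $g$-derivative is defined, consistent with the definition of $f'_g$. I would remark on this rather than use it.

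For \eqref{eq:pullback}, both sides are bilinear in $(p,q)$, so it suffices to take $p=t^i$ and $q=t^j$. By Theorem~\ref{thm:transfer-alg}, $\Phi(t^i)\star\Phi(t^j)=\Phi(t^{i+j})=g_{x_0,i+j}$. Hence
\[
\langle\Phi(t^i),\Phi(t^j)\rangle_S=\int_{[a,b)}g_{x_0,i+j}\,d\mu_g=m_{i+j}=\mathcal L(t^{i+j}).
\]
Admissibility is used here so that $\langle\cdot,\cdot\rangle_S$ is well defined through unique coefficient expansions. The bilinear extension then follows from linearity of $\Phi$ and of the integral.

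The only genuinely delicate point is the sign bookkeeping in the oriented-integral convention when $x_0$ lies outside $[a,b)$ or at an endpoint, including a possible atom of $\mu_g$ at $x_0$ or $a$. The half-open convention $[x,y)$ makes all three decompositions exact, atoms included, so I expect no real obstacle.
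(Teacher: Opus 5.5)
Your proposal is correct and follows the paper's proof: the recursive definition of $g_{x_0,n+1}$ together with additivity of the oriented Stieltjes integral gives \eqref{eq:master}, and bilinearity reduces \eqref{eq:pullback} to the monomial identity $g_{x_0,i}\star g_{x_0,j}=g_{x_0,i+j}$. Your three-case verification of additivity and your integrability bound spell out details the paper leaves implicit, and your remark that $a,b\notin C_g$ is not needed agrees with the paper, whose proof does not use that hypothesis either.
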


\begin{proof}
By Definition~\ref{def:gmonomials}, $g_{x_{0},n+1}(x)=(n+1)\int_{x_{0}}^{x}g_{x_{0},n}\,d\mu_{g}$ for every $x$. Using the additivity convention of Definition~\ref{def:mug} for the oriented integrals,
\[
\int_{[a,b)}g_{x_{0},n}\,d\mu_{g}
=\int_{x_{0}}^{b}g_{x_{0},n}\,d\mu_{g}-\int_{x_{0}}^{a}g_{x_{0},n}\,d\mu_{g}
=\frac{g_{x_{0},n+1}(b)-g_{x_{0},n+1}(a)}{n+1},
\]
which is \eqref{eq:master}. For \eqref{eq:pullback} it suffices, by bilinearity, to take $p=t^{i}$, $q=t^{j}$; then
\[
\langle\Phi(t^{i}),\Phi(t^{j})\rangle_{S}
=\int_{[a,b)}\bigl(g_{x_{0},i}\star g_{x_{0},j}\bigr)\,d\mu_{g}
=\int_{[a,b)}g_{x_{0},i+j}\,d\mu_{g}=m_{i+j}=\mathcal{L}(t^{i}t^{j}).\qedhere
\]
\end{proof}

The following corollary is the precise statement of what the $\star$-construction does and does not produce. We write $\{\hat q_{n}\}_{n}$ for the sequence of monic orthogonal polynomials associated with $\mathcal{L}$, when it exists.

\begin{corollary}\label{cor:transfer}
Let $(g,x_{0})$ be admissible and let $\mathcal{L}$ be as in Theorem~\ref{thm:transfer-an}. Then:
\begin{enumerate}
\item A sequence of monic $\star$-orthogonal $g$-polynomials exists if and only if $\mathcal{L}$ is quasi-definite, i.e. if and only if the Hankel determinants $\widehat\Delta_{n}=\det(m_{i+j})_{i,j=0}^{n-1}$ are nonzero for all $n\geq1$. In that case it is unique and given by $\hat P_{n}(\cdot\,;g)=\Phi(\hat q_{n})$.
\item The orthogonality constants are transported unchanged: if $h_n:=\langle\hat P_n,\hat P_n\rangle_S$, then $h_n=\mathcal{L}(\hat q_n^2)$.  In the positive-definite case these are the squared norms; in an indefinite quasi-definite case they are merely nonzero real diagonal constants.
\item If $t\hat q_{n}=\hat q_{n+1}+\beta_{n}\hat q_{n}+\gamma_{n}\hat q_{n-1}$ is the classical three-term recurrence, then
\begin{equation}\label{eq:ttrr-transported}
g_{x_{0},1}\star\hat P_{n}(\cdot\,;g)=\hat P_{n+1}(\cdot\,;g)+\beta_{n}\hat P_{n}(\cdot\,;g)+\gamma_{n}\hat P_{n-1}(\cdot\,;g),
\end{equation}
with the same coefficients $\beta_{n},\gamma_{n}$.
\item If $\sigma,\tau,\omega\in\mathbf{R}[t]$ and $\Sigma=\Phi(\sigma)$, $T=\Phi(\tau)$, $W=\Phi(\omega)$, then
\begin{equation}\label{eq:gde-transported}
\Sigma\star D_{g}^{2}u+T\star D_{g}u+W\star u=0
\quad\Longleftrightarrow\quad
\sigma\,v''+\tau\,v'+\omega\,v=0,
\end{equation}
where $u=\Phi(v)$. In particular a second-order $g$-differential equation for the $\star$-orthogonal $g$-polynomials is equivalent to an ordinary second-order differential equation for the ordinary polynomial family $\{\hat q_{n}\}$, of the same degrees.
\end{enumerate}
\end{corollary}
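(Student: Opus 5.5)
The plan is to derive all four items from Theorems~\ref{thm:transfer-alg} and~\ref{thm:transfer-an}. Together they say that $\Phi$ is an algebra isomorphism intertwining $d/dt$ with $D_g$, and that $\langle\Phi(p),\Phi(q)\rangle_S=\mathcal L(pq)$. Every statement about $\star$-orthogonality should therefore be a statement about $\mathcal L$ on $\mathbf R[t]$ read through $\Phi$.

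\textbf{Item (1).} Let $\{\hat P_n\}$ be a monic $\star$-orthogonal sequence. Since $\Phi$ is a linear isomorphism preserving degrees and leading coefficients, $q_n:=\Phi^{-1}(\hat P_n)$ is monic of degree $n$. By \eqref{eq:pullback}, $\mathcal L(q_nq_m)=h_n\delta_{n,m}$ with every $h_n\neq0$, so $\{q_n\}$ is a monic orthogonal polynomial sequence for $\mathcal L$. The standard theory \cite{MR0481884} then gives that such a sequence exists if and only if all Hankel determinants $\widehat\Delta_n$ are nonzero, and that it is then unique. In particular $q_n=\hat q_n$, so $\hat P_n=\Phi(\hat q_n)$. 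Conversely, if $\mathcal L$ is quasi-definite, then $\Phi(\hat q_n)$ is $\star$-orthogonal by \eqref{eq:pullback}. Uniqueness on the $g$-side follows because $\Phi$ is a bijection.

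\textbf{Item (2).} This is immediate: $h_n=\langle\Phi(\hat q_n),\Phi(\hat q_n)\rangle_S=\mathcal L(\hat q_n^2)$.

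\textbf{Item (3).} I would apply $\Phi$ to the classical recurrence $t\hat q_n=\hat q_{n+1}+\beta_n\hat q_n+\gamma_n\hat q_{n-1}$. Multiplicativity gives $\Phi(t\hat q_n)=\Phi(t)\star\Phi(\hat q_n)=g_{x_0,1}\star\hat P_n$, and linearity handles the right-hand side, which yields \eqref{eq:ttrr-transported} with the same $\beta_n,\gamma_n$.

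\textbf{Item (4).} The intertwining relation $D_g\Phi=\Phi\,d/dt$ iterates to $D_g^2\Phi(v)=\Phi(v'')$. Multiplicativity then gives
\[
\Sigma\star D_g^2u+T\star D_gu+W\star u=\Phi(\sigma v''+\tau v'+\omega v).
\]
Injectivity of $\Phi$ gives the equivalence, and degrees are preserved since $\Phi$ preserves degree.

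The only step needing care, and the one I would treat as the main obstacle, is keeping the quasi-definite case honest. The monic orthogonal polynomial theory (existence and uniqueness, and the three-term recurrence with $\gamma_n\neq0$) must be quoted for general quasi-definite functionals, not just positive ones. The constants $h_n$ must also be allowed any nonzero sign, which matches the definition of $\star$-orthogonality given in Section~\ref{sec:gogp}. Beyond that, the whole argument is transport of structure.
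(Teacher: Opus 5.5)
Your proposal is correct and follows essentially the same route as the paper: each item is obtained by transport of structure through $\Phi$. For (i)--(ii) you use the pull-back identity $\langle\Phi(p),\Phi(q)\rangle_S=\mathcal L(pq)$, degree preservation and the classical existence--uniqueness criterion; for (iii) multiplicativity; and for (iv) the intertwining $D_g\Phi=\Phi\,d/dt$ applied twice, together with injectivity of $\Phi$.
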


\begin{proof}
All four items are immediate from Theorems~\ref{thm:transfer-alg} and~\ref{thm:transfer-an}. For (i), a monic $g$-polynomial $\hat P_{n}(\cdot\,;g)$ is $\star$-orthogonal to $\mathbb{P}_{g}$ of degree $<n$ if and only if $\Phi^{-1}\hat P_{n}(\cdot\,;g)$ is a monic polynomial orthogonal with respect to $\mathcal{L}$ to all polynomials of degree $<n$, by \eqref{eq:pullback} and the fact that $\Phi$ preserves degrees; the classical existence and uniqueness criterion \cite{MR0481884} then applies. Item (ii) is \eqref{eq:pullback} with $p=q=\hat q_{n}$; the additional terminology about squared norms follows exactly when positivity makes both sides positive for nonzero polynomials. For (iii), apply $\Phi$ to the classical recurrence and use $\Phi(t\hat q_{n})=g_{x_{0},1}\star\Phi(\hat q_{n})$. For (iv), apply $\Phi$ to the ordinary equation and use \eqref{eq:intertwine} twice; injectivity of $\Phi$ gives the converse implication.
\end{proof}

\begin{remark}\label{rem:honest}
Corollary~\ref{cor:transfer} should be read as a negative result and a positive one at the same time. On the negative side, the $\star$-construction never produces a new orthogonal family in the algebraic or moment-functional sense: as an abstract differential algebra, $(\mathbb{P}_{g},\star,D_{g})$ is the ordinary polynomial algebra, and every $\star$-orthogonal family is the $\Phi$-image of an ordinary polynomial family associated with the moment functional $\mathcal L$. Any claim of novelty formulated at that level would be incorrect. On the positive side, the whole construction is condensed into the single functional $\mathcal{L}$ of \eqref{eq:master}, which is the object that retains all information relevant to \(\star\)-orthogonality, and Sections~\ref{sec:image} and~\ref{sec:dictionary} show that determining and classifying it is a nontrivial problem. What remains specific to the Stieltjes setting is therefore twofold: the realisation of ordinary polynomial families as explicit functions of the original variable $x$, expressed in the $g$-monomial basis and differentiated with $D_{g}$, and the correspondence $g\mapsto\mathcal{L}$ itself.
\end{remark}

\begin{remark}[the $\star$-product depends on the centre]\label{rem:centre}
The linear space $\mathbb{P}_{g}$ does not depend on the centre $x_{0}$ \cite[Proposition 3.14]{Cora}, but the product $\star$ does, since it is defined through the basis $\{g_{x_{0},n}\}_{n}$. Consequently the functional $\mathcal{L}$ of \eqref{eq:master} depends on $x_{0}$ as well. In the classical case $g=g_{0}$ the change of centre is the substitution $t\mapsto t+(x_{0}-x_{0}')$, an algebra automorphism of $\mathbf{R}[t]$, so nothing happens; for a general derivator the $g$-monomials do not satisfy a binomial theorem and the two products are not conjugate by an affine change of variable. Theorems~\ref{thm:image} and~\ref{thm:left} below show more precisely what changes with the centre: a jump that lies to the right of the centre contributes an endpoint derivative term, whereas the same jump placed to the left contributes an absolutely continuous exponential tail.  Compactness of the representing measure may therefore change under a change of centre.  The centre is consequently part of the data of the construction, not a harmless normalization.
\end{remark}

\section{The image of the correspondence $g\mapsto\mathcal{L}$}\label{sec:image}

The transfer principle reduces the unweighted \(\star\)-orthogonality to the functional \(\mathcal L\).  We now characterize exactly which functionals arise when the centre lies in the orthogonality interval.  The proof has three parts: a generating identity, an explicit representation, and an inverse argument showing that the parameters occurring in that representation are uniquely determined by the functional.

\subsection{The Stieltjes exponential and the master generating identity}

Throughout this section \((g,x_0)\) is admissible, \(a<b\), and \(x_0\in[a,b]\).  Write \(g=g^C+g^J\) for the continuous and jump parts on \([a,b]\).  For \(z\) sufficiently close to the origin, the solution of
\[
 y'_g=zy,\qquad y(x_0)=1,
\]
is the Stieltjes exponential \cite{FRAN,RODRIGO}
\begin{equation}\label{eq:stieltjes-exp}
 e_z(x)=
 \begin{dcases}
 \exp\!\bigl(z(g^C(x)-g^C(x_0))\bigr)
 \displaystyle\prod_{s\in D_g\cap[x_0,x)}(1+z\Delta g(s)),&x\ge x_0,\\[2mm]
 \exp\!\bigl(z(g^C(x)-g^C(x_0))\bigr)
 \displaystyle\prod_{s\in D_g\cap[x,x_0)}(1+z\Delta g(s))^{-1},&x<x_0.
 \end{dcases}
\end{equation}
On a bounded interval the jump sizes are summable, so the products converge normally on compact subsets that avoid the poles of the reciprocal factors.

\begin{lemma}[Exponential generating function of the $g$-monomials]\label{lem:egf}
For every fixed \(x\) there exists \(r_x>0\) such that, for \(|z|<r_x\),
\begin{equation}\label{eq:egf}
 \sum_{n=0}^{\infty}g_{x_0,n}(x)\frac{z^n}{n!}=e_z(x).
\end{equation}
\end{lemma}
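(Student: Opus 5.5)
The plan is to show that the formal series $E_z(x):=\sum_n g_{x_0,n}(x)z^n/n!$ converges for small $|z|$ and satisfies the integral equation $E_z(x)=1+z\int_{x_0}^x E_z\,d\mu_g$, and then to identify $e_z$ as the unique solution of that equation.

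First, a bound on the monomials. By induction from \eqref{eq:gmonomials}, for $x\ge x_0$ we have $|g_{x_0,n}(x)|\le n!\,\mu_g([x_0,x))^n$, and for $x<x_0$ we have $|g_{x_0,n}(x)|\le n!\,\mu_g([x,x_0))^n$. Indeed, if $|g_{x_0,n-1}|\le (n-1)!M^{n-1}$ on the segment between $x_0$ and $x$, where $M=|g(x)-g(x_0)|$ (or $g(x_0)-g(x)$ on the left), then $|g_{x_0,n}(x)|\le n\cdot(n-1)!M^{n-1}\cdot M$. Hence the series converges absolutely and uniformly on that segment for $|z|<r_x:=1/M$ (and for all $z$ if $M=0$). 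Termwise integration, justified by uniform convergence and finiteness of $\mu_g$ on the segment, gives
\[
 E_z(x)=1+\sum_{n\ge1}\frac{z^n}{(n-1)!}\int_{x_0}^x g_{x_0,n-1}\,d\mu_g=1+z\int_{x_0}^x E_z\,d\mu_g .
\]

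Second, I would check that $e_z$ of \eqref{eq:stieltjes-exp} satisfies the same Volterra equation on the segment for the same $z$. Note that $1+z\Delta g(s)\neq 0$ there, since $|z|\Delta g(s)<|z|M<1$. For $x\ge x_0$, $e_z$ is a product of a continuous $\exp(z g^C)$ factor and a pure-jump product, and the Stieltjes product rule for left-continuous functions of bounded variation gives $d e_z=z e_z\,d\mu_{g^C}+z e_z(s)\Delta g(s)\,\delta_s$. This is because at a jump $e_z(s^+)-e_z(s)=z\Delta g(s)e_z(s)$, and $e_z(s)$ is the left value. Equivalently, one can cite the cited result \cite{FRAN,RODRIGO} that $e_z$ solves $y'_g=zy$, $y(x_0)=1$, together with the fundamental theorem of calculus for $g$-absolutely continuous functions. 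For $x<x_0$ the same works with the reciprocal factors: the jump relation $e_z(s^+)=(1+z\Delta g(s))e_z(s)$ is again satisfied by construction.

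Third, uniqueness. The difference $w=E_z-e_z$ is bounded on the segment and satisfies $w(x)=z\int_{x_0}^x w\,d\mu_g$. Iterating this $n$ times gives $|w(x)|\le \|w\|_\infty |z|^nM^n/n!\to0$. Here I use the standard ordered-simplex estimate for the half-open intervals $[x_0,x)$, which is exactly the same estimate as in the first step applied to $|w|$. So $w\equiv0$ and \eqref{eq:egf} follows.

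The main obstacle is the second step, since left/right conventions at atoms are delicate. On the right of $x_0$ the product runs over $[x_0,x)$, including $x_0$ itself. On the left, the inverse factors over $[x,x_0)$ must match the oriented integral convention $\int_{x_0}^x=-\int_{[x,x_0)}$. I would verify this directly at a single atom $s<x_0$. For $x\le s$ the oriented integral picks up $-e_z(s)\Delta g(s)$, and the relation $e_z(s)=(1+z\Delta g(s))^{-1}e_z(s^+)$ gives consistency. Summability of the jumps makes the general case a limit of finitely many atoms.
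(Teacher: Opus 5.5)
Your proposal follows the paper's proof. Both use the same bound $|g_{x_0,n}|\le n!\,M^n$ on the segment between $x_0$ and $x$, integrate termwise to obtain the Volterra equation $E_z=1+z\int_{x_0}^{\cdot}E_z\,d\mu_g$, and identify $E_z$ with $e_z$ by uniqueness. Your second and third steps usefully spell out the two facts the paper only asserts: that $e_z$ solves this equation, and that the solution is unique.

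One correction in your uniqueness step: the ordered-simplex factor $1/n!$ is valid only to the right of $x_0$. To the left, the integrand at $s$ is itself controlled by an integral over $[s,x_0)$, and this interval contains the atom at $s$. A single atom $a$ already gives iterates of size $(|z|a)^n$. Moreover, if the $1/n!$ bound held there, your monomial bound would improve to $M^n$, making the series entire and contradicting the poles of $e_z$.

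The cruder contraction estimate $\|w\|_\infty\le(|z|M)^n\|w\|_\infty$, which is exactly what your first step provides, still suffices because $|z|M<1$.
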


\begin{proof}
Set \(I_x=[\min\{x,x_0\},\max\{x,x_0\})\) and \(M_x=\mu_g(I_x)\).  The recursive definition of the Stieltjes monomials implies, by induction,
\[
 |g_{x_0,n}(y)|\le n!\,M_x^n,
 \qquad y\in I_x\cup\{x,x_0\}.
\]
Indeed the assertion is trivial for \(n=0\); if it holds at level \(n-1\), then the oriented-integral definition and positivity of \(\mu_g\) give
\[
 |g_{x_0,n}(y)|
 \le n\int_{I_x}|g_{x_0,n-1}(s)|\,d\mu_g(s)
 \le n!M_x^n.
\]
Hence the series in \eqref{eq:egf} converges absolutely and uniformly, simultaneously in the spatial variable on \(I_x\cup\{x,x_0\}\), for \(|z|<M_x^{-1}\) when \(M_x>0\) (and trivially when \(M_x=0\)).  We avoid any unproved interchange of \(D_g\) with an infinite series and return instead to the defining integral recursion.  If
\[
 E(y,z):=\sum_{n=0}^{\infty}g_{x_0,n}(y)\frac{z^n}{n!},
\]
then uniform absolute convergence and the finiteness of \(\mu_g(I_x)\) allow summation under the Stieltjes integral.  For \(y\ge x_0\),
\begin{align*}
 E(y,z)
 &=1+\sum_{n\ge1}\frac{z^n}{(n-1)!}
       \int_{[x_0,y)}g_{x_0,n-1}(s)\,d\mu_g(s)\\
 &=1+z\int_{[x_0,y)}E(s,z)\,d\mu_g(s).
\end{align*}
For \(y<x_0\) the same computation with the oriented integral gives the identical Volterra equation with the corresponding orientation.  Thus \(E(\cdot,z)\) satisfies the integral form of
\[
 D_gE=zE,\qquad E(x_0,z)=1.
\]
The Stieltjes exponential \(e_z\) in \eqref{eq:stieltjes-exp} is the unique solution of this linear integral equation.  Consequently \(E(y,z)=e_z(y)\), and in particular \eqref{eq:egf} holds at the prescribed point \(x\).
\end{proof}

For a linear functional on polynomials we use the notation
\[
 \mathcal L(e^{zt}):=\sum_{n\ge0}\mathcal L(t^n)\frac{z^n}{n!}
\]
whenever this exponential generating series converges.

\begin{theorem}[Master generating identity]\label{thm:master-gf}
Let \(m_n\) be the moments in \eqref{eq:master-moments}.  Then, for \(z\) in a neighbourhood of \(0\),
\begin{equation}\label{eq:mastergf}
 \mathcal L(e^{zt})
 =\sum_{n\ge0}m_n\frac{z^n}{n!}
 =\frac{e_z(b)-e_z(a)}{z},
\end{equation}
where the value at \(z=0\) is understood by removable continuation.
\end{theorem}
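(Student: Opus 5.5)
The plan is to combine the moment formula \eqref{eq:master} of Theorem~\ref{thm:transfer-an} with the exponential generating function of Lemma~\ref{lem:egf}.

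First, by \eqref{eq:master},
\[
 \sum_{n\ge0}m_n\frac{z^n}{n!}=\sum_{n\ge0}\frac{g_{x_0,n+1}(b)-g_{x_0,n+1}(a)}{(n+1)!}\,z^n
 =\frac1z\sum_{k\ge1}\bigl(g_{x_0,k}(b)-g_{x_0,k}(a)\bigr)\frac{z^k}{k!}.
\]
Adding the vanishing $k=0$ term $g_{x_0,0}(b)-g_{x_0,0}(a)=1-1=0$ turns the inner sum into the difference of the two exponential generating series evaluated at $b$ and at $a$.

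Second, I apply Lemma~\ref{lem:egf} at $x=b$ and at $x=a$. This gives radii $r_b,r_a>0$ such that the two series converge absolutely and equal $e_z(b)$ and $e_z(a)$ for $|z|<r:=\min\{r_a,r_b\}$. Splitting the series is then legitimate, and on $0<|z|<r$ we obtain $\sum m_n z^n/n!=(e_z(b)-e_z(a))/z$. The left-hand side is a power series convergent on $|z|<r$, with value $m_0$ at $z=0$. The right-hand side is analytic on the punctured disc, and its difference quotient at $0$ extends continuously with the value $m_0$. This is the removable continuation the statement asks for, and it is consistent because $e_0\equiv1$.

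The one point to check is convergence of $\sum m_nz^n/n!$ near $0$, so that the notation $\mathcal L(e^{zt})$ is meaningful. This follows from the bound $|g_{x_0,n}(y)|\le n!M_y^n$ proved inside Lemma~\ref{lem:egf}. For $y=a,b$ it gives $|m_n|\le n!\max(M_a,M_b)^{n+1}$, so the series converges for $|z|<\max(M_a,M_b)^{-1}$. Since $x_0\in[a,b]$, the two quantities are $M_a=\mu_g([a,x_0))$ and $M_b=\mu_g([x_0,b))$.

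I do not expect a real obstacle: the argument is a term-by-term index shift. The only care needed is in matching the orientation conventions for $a\le x_0\le b$, where $a$ uses the branch of \eqref{eq:stieltjes-exp} with $x<x_0$ (or $e_z(x_0)=1$ when $a=x_0$). These conventions are already built into Lemma~\ref{lem:egf}.
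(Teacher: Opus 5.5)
Your proposal is correct and follows the paper's proof essentially verbatim: the index shift from \eqref{eq:master}, then Lemma~\ref{lem:egf} applied at $x=b$ and $x=a$, then the removable singularity at $z=0$. Your extra convergence check is harmless but redundant, since the partial-sum identity already gives convergence of $\sum m_n z^n/n!$ once the two endpoint series converge. There is also a small slip in it: the bound should read $|m_n|\le n!\,(M_a^{n+1}+M_b^{n+1})\le 2\,n!\max(M_a,M_b)^{n+1}$, and the extra factor $2$ does not change the radius of convergence.
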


\begin{proof}
The master moment formula \eqref{eq:master} gives
\[
 \frac{m_n}{n!}
 =\frac{g_{x_0,n+1}(b)-g_{x_0,n+1}(a)}{(n+1)!}.
\]
Therefore, for \(z\ne0\) sufficiently small,
\begin{align*}
 \sum_{n\ge0}m_n\frac{z^n}{n!}
 &=\frac1z\sum_{n\ge0}
 \bigl(g_{x_0,n+1}(b)-g_{x_0,n+1}(a)\bigr)
 \frac{z^{n+1}}{(n+1)!}\\
 &=\frac1z\left[
 \sum_{k\ge1}g_{x_0,k}(b)\frac{z^k}{k!}
 -\sum_{k\ge1}g_{x_0,k}(a)\frac{z^k}{k!}
 \right]\\
 &=\frac{(e_z(b)-1)-(e_z(a)-1)}{z}
 =\frac{e_z(b)-e_z(a)}{z},
\end{align*}
where Lemma~\ref{lem:egf} was used at the two endpoints.  The numerator vanishes at \(z=0\), so the apparent singularity is removable.
\end{proof}

\subsection{The image theorem}

Define the nonnegative continuous increments
\begin{equation}\label{eq:cpm}
 c_-:=g^C(x_0)-g^C(a),\qquad
 c_+:=g^C(b)-g^C(x_0),
\end{equation}
and the at most countable multisets
\begin{equation}\label{eq:AB}
 A:=\{\Delta g(s):s\in D_g\cap[x_0,b)\},\qquad
 B:=\{\Delta g(s):s\in D_g\cap[a,x_0)\}.
\end{equation}
All their elements are positive and
\(\sum_{\alpha\in A}\alpha<\infty\),
\(\sum_{\beta\in B}\beta<\infty\).  Put
\begin{equation}\label{eq:PiAB}
 \Pi_A(z):=\prod_{\alpha\in A}(1+\alpha z)
 =\sum_{j\ge0}e_j(A)z^j,
 \qquad
 \Pi_B(z):=\prod_{\beta\in B}(1+\beta z),
\end{equation}
with \(e_0(A)=1\).  The coefficients \(e_j(A)\) are finite because
\(0\le e_j(A)\le (\sum_{\alpha\in A}\alpha)^j/j!\).

For each \(\beta\in B\), let \(E_\beta\) be an independent exponential random variable with mean \(\beta\), and set
\[
 X_B:=\sum_{\beta\in B}E_\beta.
\]
Since \(\sum_{\beta\in B}\mathbb E E_\beta=\sum_{\beta\in B}\beta<\infty\), monotone convergence gives \(\mathbb E X_B<\infty\), and hence \(X_B<\infty\) almost surely.  Let
\begin{equation}\label{eq:survival}
 S_B(v):=\mathbb P(X_B>v),\qquad v\ge0,
\end{equation}
with \(S_\emptyset\equiv0\).  For \(z\) in a sufficiently small complex neighbourhood of the origin,
\begin{equation}\label{eq:laplaceXB}
 \mathbb E(e^{-zX_B})=\frac1{\Pi_B(z)}.
\end{equation}
Indeed, if \(X_{B,m}\) denotes a finite partial sum, independence gives
\(\mathbb E e^{-zX_{B,m}}=\prod_{j\le m}(1+\beta_jz)^{-1}\).  Since \(\sum\beta_j<\infty\), one can choose \(\delta>0\) such that \(\delta\sup_j\beta_j<1/2\) and
\[
 \sup_m\mathbb E e^{\delta X_{B,m}}
 =\sup_m\prod_{j\le m}(1-\delta\beta_j)^{-1}<\infty.
\]
The partial sums increase almost surely to \(X_B\), and the preceding exponential-moment bound gives uniform integrability for \(|\Re z|<\delta\).  Passing to the limit proves \eqref{eq:laplaceXB}; normal convergence of \(\Pi_B\) gives the same analytic function on that neighbourhood.
If \(B=\{\beta_1,\ldots,\beta_m\}\) is finite with pairwise distinct entries, the familiar hypoexponential formula gives
\begin{equation}\label{eq:survival-explicit}
 S_B(v)=\sum_{j=1}^m
 \frac{\beta_j^{m-1}}{\prod_{i\ne j}(\beta_j-\beta_i)}e^{-v/\beta_j}.
\end{equation}
The probabilistic definition \eqref{eq:survival} is used when entries repeat or \(B\) is infinite.

\begin{theorem}[Image and structure theorem]\label{thm:image}
Let \((g,x_0)\) be admissible and \(x_0\in[a,b]\).  With the notation above,
\begin{equation}\label{eq:mastergf2}
 {\displaystyle
 \mathcal L(e^{zt})=
 \frac1z\left(
 e^{c_+z}\Pi_A(z)-\frac{e^{-c_-z}}{\Pi_B(z)}
 \right)}
\end{equation}
for \(z\) in a neighbourhood of the origin.  Equivalently, for every \(p\in\mathbb R[t]\),
\begin{equation}\label{eq:structure}
 {\displaystyle
 \mathcal L(p)=
 \int_{-c_-}^{c_+}p(t)\,dt
 +\sum_{j\ge1}e_j(A)p^{(j-1)}(c_+)
 +\int_{-\infty}^{-c_-}p(t)S_B(-c_--t)\,dt.}
\end{equation}
The middle sum is finite for each polynomial \(p\).

Conversely, for every quadruple \((c_-,c_+,A,B)\) with \(c_\pm\ge0\) and summable multisets \(A,B\) of positive numbers, there exist a derivator \(g\), an interval \([a,b]\), and a centre \(x_0\in(a,b)\) such that \((g,x_0)\) is admissible and its associated functional is \eqref{eq:structure}.  Hence \eqref{eq:structure} describes the full image of the normalized correspondence \((g,x_0;[a,b])\mapsto\mathcal L\) with \(x_0\in[a,b]\).
\end{theorem}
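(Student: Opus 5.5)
The plan has three stages: compute the two endpoint values of the Stieltjes exponential, identify each resulting term as the generating series of an explicit functional, and then build a derivator for an arbitrary quadruple.

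\textbf{Endpoint values.} Starting from the master generating identity (Theorem~\ref{thm:master-gf}), it suffices to evaluate \(e_z(b)\) and \(e_z(a)\) from \eqref{eq:stieltjes-exp}. Since \(b\ge x_0\), the definitions \eqref{eq:cpm} and \eqref{eq:AB} give
\[
e_z(b)=e^{c_+z}\prod_{s\in D_g\cap[x_0,b)}(1+z\Delta g(s))=e^{c_+z}\Pi_A(z).
\]
When \(a<x_0\), the same formula gives \(e_z(a)=e^{-c_-z}\Pi_B(z)^{-1}\). The degenerate case \(a=x_0\) is consistent, because then \(c_-=0\), \(B=\emptyset\) and \(e_z(a)=1\). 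This proves \eqref{eq:mastergf2}.

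\textbf{Term-by-term identification.} To pass to \eqref{eq:structure}, I will compute the exponential generating series \(\sum_n \mathcal M(t^n)z^n/n!\) of each of the three functionals on the right of \eqref{eq:structure}. Coefficient comparison near \(z=0\) then identifies \(\mathcal L\) with their sum, since a functional on \(\mathbf R[t]\) is determined by its moments.

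The first term is elementary: \(\int_{-c_-}^{c_+}e^{zt}\,dt=(e^{c_+z}-e^{-c_-z})/z\).

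For the middle term, applying \(p\mapsto p^{(j-1)}(c_+)\) to \(e^{zt}\) gives \(z^{j-1}e^{c_+z}\). Hence \(\sum_{j\ge1}e_j(A)z^{j-1}e^{c_+z}=e^{c_+z}(\Pi_A(z)-1)/z\). The interchange of sums is legitimate because the bound \(e_j(A)\le(\sum\alpha)^j/j!\) makes the double series absolutely convergent. For each polynomial \(p\) only finitely many derivatives are nonzero, so the middle sum in \eqref{eq:structure} is finite.

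For the tail term, substitute \(t=-c_--v\) to get \(e^{-c_-z}\int_0^\infty e^{-zv}S_B(v)\,dv\). Integrating by parts, using \(\int_0^\infty e^{-zv}S_B(v)\,dv=(1-\mathbb E e^{-zX_B})/z\), and then \eqref{eq:laplaceXB}, this equals \(e^{-c_-z}(1-\Pi_B(z)^{-1})/z\). This is valid for \(|\Re z|<\delta\) thanks to the exponential moment bound established above, and that same bound gives finiteness of all polynomial moments of the tail density.

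Adding the three contributions, the \(\pm e^{c_+z}/z\) and \(\pm e^{-c_-z}/z\) terms cancel and the sum is exactly \(\frac1z(e^{c_+z}\Pi_A-e^{-c_-z}/\Pi_B)\). All three series are analytic near \(0\), so matching Taylor coefficients identifies the moments.

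\textbf{Converse construction.} Given a quadruple \((c_-,c_+,A,B)\), take \(x_0=0\), \(a=-1\), \(b=1\). Enumerate \(A=\{\alpha_k\}\) and \(B=\{\beta_k\}\). Let the continuous part be \(g^C(x)=c_-x\) for \(x\le0\) and \(c_+x\) for \(x\ge0\). Place jumps \(\alpha_k\) at distinct points \(s_k\in[0,1)\), for instance \(s_k=1-2^{-k}\) with \(s_0\) allowed at \(0\), and jumps \(\beta_k\) at distinct points of \((-1,0)\). Define \(g=g^C+g^B\) left-continuously; summability makes \(g\) a finite nondecreasing derivator, and by construction \(c_\pm\), \(A\), \(B\) are recovered from \eqref{eq:cpm} and \eqref{eq:AB}.

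The delicate point is the hypothesis \(a,b\notin C_g\) and admissibility when \(c_\pm=0\) and the jump set is sparse. If \(c_++c_->0\), then \(g^C\) is strictly monotone on one side, and the restrictions of \(g_{0,n}\) there are polynomials in \(g^C\) of exact degree \(n\); this gives linear independence, by the remark following Proposition~\ref{prop:admissible}.

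In the purely discrete case, I would first try placing the jumps so that they accumulate at \(b=1\) and at \(a\), ensuring the endpoints are not in \(C_g\). Admissibility then follows because the values \(g_{0,n}(s_k)\) at the increasing sequence of jump points satisfy \(g_{0,n}(s_{k+1})=\sum\) over prior jumps, which is a polynomial of degree \(n\) in the partial sums. If infinitely many distinct jumps exist, these distinct values separate degrees. When \(A\cup B\) is finite with \(c_\pm=0\), the monomials can genuinely become dependent, since only finitely many distinct points carry mass. I expect this finite purely-atomic case to be the main obstacle. My first attempt would be to show that the moments are still well defined via \eqref{eq:master} on the quotient; failing that, I would argue that the statement intends \(x_0\) such that admissibility holds and handle it by a direct check that \(g_{0,n}\) restricted to the finitely many jump points plus an additional evaluation point beyond \(b\) are independent.
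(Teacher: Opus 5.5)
Your first two stages agree with the paper's argument: the endpoint evaluation of $e_z$, the three-term split, and the Laplace identity for $S_B$ (your integration by parts is equivalent to the paper's Tonelli step).

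\textbf{The gap is in the converse.} It is the one you flag without closing. When $c_-=c_+=0$ and $A\cup B$ is finite, your construction gives no admissible pair. This includes $A=B=\emptyset$, i.e.\ the zero functional, which must also be realized. Yet the statement asserts exactly that an admissible pair exists. Neither fallback repairs this:
\begin{itemize}
\item Passing to a quotient changes the claim, since admissibility is part of the conclusion.
\item Checking independence of the restrictions of $g_{0,n}$ to finitely many jump points plus one further point cannot succeed. Evaluation at finitely many points sends the infinite family $\{g_{0,n}\}_{n\ge0}$ into a finite-dimensional space.
\end{itemize}

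\textbf{The missing observation.} Admissibility is linear independence of functions on all of $\mathbf R$, whereas $\mathcal L$ depends only on $g$ on $[a,b]$. By \eqref{eq:master}, only $g_{0,n}(a)$ and $g_{0,n}(b)$ enter, and these depend only on $\mu_g$ restricted to $[a,b)$. You may therefore choose $g$ freely outside $[-1,1]$. The paper extends it continuously with slope one beyond both endpoints. On $(1,2)$ there are no jumps and $D_g$ is the ordinary derivative. So $D_gg_{0,n}=ng_{0,n-1}$ shows by induction that $g_{0,n}|_{(1,2)}$ is a monic polynomial of degree $n$. A nontrivial relation $\sum_{n\le N}\lambda_ng_{0,n}=0$ would then restrict to a nonzero polynomial vanishing on an interval, which is impossible.

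This single device settles admissibility uniformly, with no case split on $c_\pm$ or on the cardinality of $A\cup B$. It also makes $g$ strictly increasing next to $a$ and $b$, so $a,b\notin C_g$, which removes your other concern.

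\textbf{A smaller point.} Your argument for the infinite purely atomic case is imprecise. With $c_+=0$ and right jumps $\alpha_1,\alpha_2,\dots$ at increasing points $s_1<s_2<\cdots$, one has $g_{0,n}(s_{k+1})=n!\,e_n(\alpha_1,\dots,\alpha_k)$. Independence comes from the triangularity $e_n(\alpha_1,\dots,\alpha_k)=0$ for $n>k$ and $e_k(\alpha_1,\dots,\alpha_k)\neq0$, not from a degree count in partial sums.
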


\begin{proof}
By \eqref{eq:stieltjes-exp}, evaluation at the right endpoint gives
\[
 e_z(b)=e^{c_+z}\Pi_A(z),
\]
whereas evaluation at the left endpoint gives
\[
 e_z(a)=e^{-c_-z}\Pi_B(z)^{-1}.
\]
Substitution in Theorem~\ref{thm:master-gf} proves \eqref{eq:mastergf2}.

We next derive \eqref{eq:structure} without any formal inversion.  Split the right-hand side of \eqref{eq:mastergf2} into
\begin{equation}\label{eq:splitmaster}
 \frac{e^{c_+z}-e^{-c_-z}}{z}
 +e^{c_+z}\frac{\Pi_A(z)-1}{z}
 +e^{-c_-z}\frac{1-\Pi_B(z)^{-1}}{z}.
\end{equation}
The first term equals
\[
 \int_{-c_-}^{c_+}e^{zt}\,dt.
\]
For the second term, normal convergence of \(\Pi_A\) gives
\[
 e^{c_+z}\frac{\Pi_A(z)-1}{z}
 =\sum_{j\ge1}e_j(A)z^{j-1}e^{c_+z}.
\]
Since
\[
 z^{j-1}e^{c_+z}
 =\left.\frac{d^{j-1}}{dt^{j-1}}e^{zt}\right|_{t=c_+},
\]
comparison of exponential generating coefficients produces the middle term of \eqref{eq:structure}.  For a polynomial of degree \(d\), derivatives of order greater than \(d\) vanish, so this contribution is always a finite sum even when \(A\) is infinite.

For the third term, use \eqref{eq:laplaceXB}.  For real \(z>0\) sufficiently small, Tonelli's theorem gives
\begin{align*}
 \frac{1-\Pi_B(z)^{-1}}{z}
 &=\frac1z\int_0^\infty(1-e^{-zv})\,d\mathbb P_{X_B}(v)\\
 &=\int_0^\infty\left(\int_0^v e^{-zu}\,du\right)d\mathbb P_{X_B}(v)\\
 &=\int_0^\infty e^{-zu}\,\mathbb P(X_B>u)\,du
 =\int_0^\infty e^{-zu}S_B(u)\,du.
\end{align*}
Both sides are analytic near \(z=0\), so the identity extends there.  Multiplication by \(e^{-c_-z}\) and the change of variables \(t=-c_--u\) yield
\[
 e^{-c_-z}\frac{1-\Pi_B(z)^{-1}}{z}
 =\int_{-\infty}^{-c_-}e^{zt}S_B(-c_--t)\,dt.
\]
Combining the three pieces of \eqref{eq:splitmaster} and comparing Taylor coefficients proves \eqref{eq:structure} on monomials and hence on all polynomials.

It remains to prove the converse, including admissibility.  Fix, for definiteness, \(a=-1\), \(x_0=0\), \(b=1\).  Define a continuous non-decreasing function \(h\) on \([-1,1]\) by
\[
 h(x)=
 \begin{cases}
 c_-x,&-1\le x\le0,\\
 c_+x,&0\le x\le1.
 \end{cases}
\]
Thus \(h(0)-h(-1)=c_-\) and \(h(1)-h(0)=c_+\); either side is allowed to be flat.  Enumerate \(A=\{\alpha_i\}\) and \(B=\{\beta_j\}\).  Choose distinct points \(u_i\in[0,1)\) increasing to \(1\) when \(A\) is infinite, and distinct points \(v_j\in(-1,0)\) decreasing to \(-1\) when \(B\) is infinite.  Set on \([-1,1]\)
\[
 g(x)=h(x)
 +\sum_i\alpha_i\chi_{(u_i,\infty)}(x)
 +\sum_j\beta_j\chi_{(v_j,\infty)}(x).
\]
Summability of the jumps makes this a finite-valued left-continuous non-decreasing function.  Extend it continuously and strictly increasingly outside \([-1,1]\), for example with slope one, preserving its endpoint values.  The resulting global function is a derivator with precisely the prescribed data on the orthogonality interval.

To verify admissibility, consider the open interval \((1,2)\).  There are no jumps there and the extension has slope one, so \(D_g\) is the ordinary derivative on that interval.  From \(D_gg_{0,n}=n g_{0,n-1}\), an induction on \(n\) shows that the restriction of \(g_{0,n}\) to \((1,2)\) is an ordinary polynomial in \(x\) of degree \(n\) with leading coefficient one: the assertion is clear for \(n=0\), and integrating the derivative relation raises the degree by one while preserving the monic leading coefficient.  Hence a relation \(\sum_{n=0}^N\lambda_ng_{0,n}=0\) with \(\lambda_N\ne0\) would restrict on \((1,2)\) to an ordinary polynomial of degree \(N\) with leading coefficient \(\lambda_N\), which cannot vanish identically.  Thus \((g,0)\) is admissible.  Formula \eqref{eq:mastergf2} then gives the prescribed functional.
\end{proof}

\subsection{Exact description of the fibres}

Theorem~\ref{thm:image} proves that \((c_-,c_+,A,B)\) determines \(\mathcal L\).  The converse is less immediate and is needed if the word ``fibres'' is to be understood literally.

\begin{theorem}[Uniqueness of the structural parameters]\label{thm:fibres}
Let \(\mathcal L_{c_-,c_+,A,B}\) and \(\mathcal L_{\widetilde c_-,\widetilde c_+,\widetilde A,\widetilde B}\) be two functionals of the form \eqref{eq:structure}.  If they agree on \(\mathbb R[t]\), then
\[
 c_-=\widetilde c_-,\qquad
 c_+=\widetilde c_+,\qquad
 A=\widetilde A,\qquad B=\widetilde B
\]
as multisets.  Consequently the fibres of the correspondence are exactly the classes of derivators having the same quadruple \((c_-,c_+,A,B)\).
\end{theorem}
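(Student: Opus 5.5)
The plan is to work with the meromorphic function \(F(z):=z\,\mathcal L(e^{zt})\). Equality of the two functionals on \(\mathbb R[t]\) means equality of all moments. By Theorem~\ref{thm:image} the exponential generating series converges near \(0\), so
\[
 e^{c_+z}\Pi_A(z)-\frac{e^{-c_-z}}{\Pi_B(z)}
 = e^{\widetilde c_+z}\Pi_{\widetilde A}(z)-\frac{e^{-\widetilde c_-z}}{\Pi_{\widetilde B}(z)}
\]
holds on a neighbourhood of the origin.

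Summability of \(A\) makes \(\Pi_A\) an entire function of genus zero whose zeros are exactly the points \(-1/\alpha\), \(\alpha\in A\), with multiplicities. Likewise \(1/\Pi_B\) is meromorphic on \(\mathbf C\), and its poles are exactly the points \(-1/\beta\). Hence both sides extend to meromorphic functions on \(\mathbf C\), and by the identity theorem they agree there.

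\textbf{Recovering \(B\).} The poles of the left side are the zeros of \(\Pi_B\). At such a pole the term \(e^{c_+z}\Pi_A\) is holomorphic, so the pole order equals the multiplicity of \(\beta\) in \(B\). The exponential factor \(e^{-c_-z}\) never vanishes, so it does not affect pole orders. Comparing poles and orders on both sides gives \(B=\widetilde B\) as multisets.

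\textbf{Recovering \(c_-\).} Set \(G(z)=e^{c_+z}\Pi_A(z)-e^{\widetilde c_+z}\Pi_{\widetilde A}(z)\), which is entire. The identity becomes
\[
 G(z)=\frac{e^{-c_-z}-e^{-\widetilde c_-z}}{\Pi_B(z)}.
\]
Suppose \(c_-\neq\widetilde c_-\). I would then compare growth along the positive real axis, or along the imaginary axis, where the terms are more tractable.

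On \(z=iy\), the entire factors of genus zero satisfy \(|\Pi_A(iy)|=\prod(1+\alpha^2y^2)^{1/2}\). A first thought is that \(|\Pi_A(iy)|\) grows at most like \(e^{o(|y|)}\) and at least polynomially unless \(A\) is empty. This alone does not separate the terms cleanly.

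A more robust route is the real axis with \(z=x\to-\infty\). There \(\Pi_A(x)\) has zeros, so I would instead use \(z=x\to+\infty\). Then \(e^{c_+x}\Pi_A(x)=e^{c_+x+o(x)}\), since genus-zero products have order of growth \(o(|z|)\) by the standard estimate \(\log\Pi_A(x)\le\sum\log(1+\alpha x)=o(x)\) for summable \(\alpha\). Also \(1/\Pi_B(x)\le1\), and \(1/\Pi_B(x)\ge e^{-o(x)}\).

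Hence on the right side the terms \(e^{-c_-x}/\Pi_B(x)\) are \(e^{-c_-x-o(x)}\). Moving them to one side, the identity reads
\[
 e^{c_+x}\Pi_A(x)-e^{\widetilde c_+x}\Pi_{\widetilde A}(x)=\frac{e^{-c_-x}-e^{-\widetilde c_-x}}{\Pi_B(x)}.
\]
The right side is bounded as \(x\to+\infty\). So \(e^{c_+x}\Pi_A(x)\sim e^{\widetilde c_+x}\Pi_{\widetilde A}(x)\), and taking \(\tfrac1x\log\) gives \(c_+=\widetilde c_+\).

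Symmetrically, let \(x\to-\infty\). The left side is \(e^{-c_+|x|}\) times an \(e^{o(|x|)}\) quantity, so it is at most \(e^{o(|x|)}\). Meanwhile \(1/\Pi_B(x)\) is meromorphic there, with poles on the negative axis.

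To avoid those poles I would instead use the ray \(z=-x+i\eta\) with fixed \(\eta\neq0\), or simply the imaginary-shifted line, where \(|1/\Pi_B|\ge e^{-o(|x|)}\) by the same genus-zero estimate. Then the growth rate \(e^{\max(c_-,\widetilde c_-)|x|}\) of the right side, if \(c_-\neq\widetilde c_-\), contradicts the \(e^{o(|x|)}\) bound on the left. This gives \(c_-=\widetilde c_-\).

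\textbf{Recovering \(A\) and \(c_+\).} Now the right side vanishes identically, so \(e^{c_+z}\Pi_A(z)=e^{\widetilde c_+z}\Pi_{\widetilde A}(z)\) as entire functions. Comparing zeros with multiplicity gives \(A=\widetilde A\). Then \(e^{(c_+-\widetilde c_+)z}\equiv1\), so \(c_+=\widetilde c_+\).

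The fibre statement then follows from the converse part of Theorem~\ref{thm:image}: \(\mathcal L\) depends only on the quadruple, and the quadruple is determined by \(\mathcal L\).

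\textbf{Main obstacle.} The delicate step is the lower bound \(|\Pi_B(z)|^{\pm1}=e^{o(|z|)}\) away from the zeros along a chosen ray, which is what separates the exponential type \(c_-\) from the contribution of \(B\). I would prove it directly: \(\log|1+\beta z|\le\log(1+\beta|z|)\), summed and divided by \(|z|\), tends to \(0\) by dominated convergence. On a line \(\Im z=\eta\neq0\) we have \(|1+\beta z|\ge\min(1,\beta|\eta|)\) and \(|1+\beta z|\ge1\) for \(\Re z\ge0\). On such rays the product of factors with \(\beta\) large stays controlled, and the finitely many remaining factors contribute only polynomially.
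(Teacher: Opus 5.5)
Your argument is correct. Its skeleton is the paper's: continue $F(z)=z\mathcal L(e^{zt})$ meromorphically, read $B$ off the poles and $A$ off the zeros, and use the zero exponential type of the canonical products. The two middle steps, however, are done differently.

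For $c_-$, the paper splits into cases. If $B\neq\emptyset$ it reads $c_-$ off the leading Laurent coefficient $\lim_{z\to-1/\beta}(1+\beta z)^mF(z)=-e^{c_-/\beta}/Q_\beta(-1/\beta)$ at a pole. Only when $B=\emptyset$ does it use growth along the negative axis. You instead compare the two representations and run one growth argument for every $B$. This works because the only estimate needed is $|\Pi_B(z)|\le\Pi_B(|z|)=e^{o(|z|)}$, which holds on all of $\mathbb C$.

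For $c_+$, the paper uses $c_+=\lim_{r\to\infty}r^{-1}\log H(r)$. You get it for free from $e^{c_+z}\Pi_A=e^{\widetilde c_+z}\Pi_{\widetilde A}$ once the zeros have been matched.

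Your route is shorter and avoids the case split. The paper's route yields explicit inversion formulas that reconstruct $(c_-,c_+,A,B)$ from a single $F$, rather than only comparing two representations. That is the content of the abstract's claim that the quadruple is \emph{recovered from} $\mathcal L$.

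Some tidying is needed.

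The detour to the line $\Im z=\eta$ is unnecessary. $G\,\Pi_B=e^{-c_-z}-e^{-\widetilde c_-z}$ is an identity of entire functions, so you can evaluate it at $z=-r$. This gives $|e^{c_-r}-e^{\widetilde c_-r}|\le|G(-r)|\,\Pi_B(r)=e^{o(r)}$ directly.

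Your final paragraph misidentifies the difficulty. Only the one-sided bound above is used. The reverse bound $|\Pi_B(z)|\ge e^{-o(|z|)}$ on a horizontal line is false in general for infinite $B$. For example, take the jump size $1/r_k$ repeated about $r_k/k^2$ times, with $r_k=2^{2^k}$; then $r_k^{-1}\log|\Pi_B(-r_k+i\eta)|\to-\infty$. That paragraph should be replaced by the one-sided estimate.

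The positive-axis derivation of $c_+=\widetilde c_+$ inside the $c_-$ step is redundant. Its asymptotic equivalence also needs the case $c_+=0$, $A=\emptyset$ handled separately.

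Finally, the fact that $\mathcal L$ depends only on the quadruple comes from the direct formula \eqref{eq:mastergf2}, not from the converse (realization) part of Theorem~\ref{thm:image}.
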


\begin{proof}
Equality of the functionals means equality of all moments, hence equality of their exponential generating functions in a neighbourhood of the origin.  Put
\begin{equation}\label{eq:Fmeromorphic}
 F(z):=z\mathcal L(e^{zt})
 =e^{c_+z}\Pi_A(z)-\frac{e^{-c_-z}}{\Pi_B(z)}.
\end{equation}
The right-hand side is meromorphic on \(\mathbb C\), with possible poles only on the negative real axis.  Equality near \(0\) therefore implies equality of the meromorphic continuations.

We first recover \(B\).  The entire function \(e^{c_+z}\Pi_A(z)\) cannot cancel a pole of the second term.  Thus the poles of \(F\), with multiplicities, are exactly
\[
 z=-\beta^{-1},\qquad \beta\in B,
\]
and they determine the multiset \(B\) uniquely.  The same argument gives \(B=\widetilde B\).

Assume first that \(B\ne\emptyset\), and choose \(\beta\in B\) of multiplicity \(m\).  Let
\[
 \Pi_B(z)=(1+\beta z)^m Q_\beta(z),
 \qquad Q_\beta(-1/\beta)\ne0.
\]
Multiplying \eqref{eq:Fmeromorphic} by \((1+\beta z)^m\) and taking \(z\to-1/\beta\) eliminates the entire first term and gives
\[
 \lim_{z\to-1/\beta}(1+\beta z)^mF(z)
 =-\frac{e^{c_-/\beta}}{Q_\beta(-1/\beta)}.
\]
Everything on the right except \(c_-\) is already known from \(B\).  Since \(c_-\mapsto e^{c_-/\beta}\) is injective on \([0,\infty)\), this determines \(c_-\).

If \(B=\emptyset\), there are no poles and
\[
 F(-r)=e^{-c_+r}\Pi_A(-r)-e^{c_-r},\qquad r>0.
\]
The canonical product \(\Pi_A\) has zero exponential type.  Indeed,
\[
 0\le\frac{\log\Pi_A(r)}r
 =\sum_{\alpha\in A}\frac{\log(1+\alpha r)}r
 \le\sum_{\alpha\in A}\alpha,
\]
and for each fixed \(\alpha\), \(r^{-1}\log(1+\alpha r)\to0\); dominated convergence over the summable multiset \(A\) yields
\[
 \lim_{r\to\infty}\frac{\log\Pi_A(r)}r=0.
\]
Moreover \(|\Pi_A(-r)|\le\Pi_A(r)\), so the same upper exponential estimate holds on the negative axis.  It follows that
\begin{equation}\label{eq:recover-cminus}
 c_-=\limsup_{r\to\infty}\frac1r\log^+|F(-r)|,
 \qquad \log^+x:=\max\{0,\log x\}\ \ (x>0),\qquad \log^+0:=0.
\end{equation}
If \(c_->0\), the term \(-e^{c_-r}\) dominates the subexponential first term; if \(c_-=0\), the limsup is zero.  Thus \(c_-\) is again determined.

Having recovered \(B\) and \(c_-\), form
\begin{equation}\label{eq:Hrecover}
 H(z):=F(z)+\frac{e^{-c_-z}}{\Pi_B(z)}=e^{c_+z}\Pi_A(z).
\end{equation}
For \(r>0\), \(H(r)>0\).  The same zero-exponential-type estimate gives
\begin{equation}\label{eq:recover-cplus}
 c_+=\lim_{r\to\infty}\frac{\log H(r)}r.
\end{equation}
Finally the zeros of \(H\) are precisely \(-1/\alpha\), \(\alpha\in A\), with the same multiplicities, because the exponential factor never vanishes.  Therefore \(A\) is uniquely recovered.  This proves uniqueness of all four components.
\end{proof}

\begin{corollary}[Fibres]\label{cor:fibres}
Neither the positions of the jumps nor the detailed shape of the continuous part of \(g\) affect \(\mathcal L\).  Two admissible centered derivators with centres in their orthogonality intervals yield the same functional if and only if they have the same two continuous increments and the same multisets of right- and left-hand jump sizes.
\end{corollary}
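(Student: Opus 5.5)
The corollary follows by combining the two directions already established: Theorem~\ref{thm:image} shows that the quadruple determines \(\mathcal L\), and Theorem~\ref{thm:fibres} shows that \(\mathcal L\) determines the quadruple.

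First I would fix two admissible centred derivators \((g,x_0;[a,b])\) and \((\widetilde g,\widetilde x_0;[\widetilde a,\widetilde b])\) with \(x_0\in[a,b]\) and \(\widetilde x_0\in[\widetilde a,\widetilde b]\). Let \((c_-,c_+,A,B)\) and \((\widetilde c_-,\widetilde c_+,\widetilde A,\widetilde B)\) be their data as in \eqref{eq:cpm} and \eqref{eq:AB}.

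\emph{Sufficiency.} By \eqref{eq:structure} in Theorem~\ref{thm:image}, each functional \(\mathcal L\) is given explicitly as a function of its quadruple alone. Equivalently, \eqref{eq:mastergf2} gives the exponential generating function, which fixes every moment \(m_n\). So equal quadruples yield equal functionals on \(\mathbb R[t]\).

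\emph{Necessity.} If the two functionals agree on \(\mathbb R[t]\), then both have the form \eqref{eq:structure}. Theorem~\ref{thm:fibres} then gives \(c_\pm=\widetilde c_\pm\), \(A=\widetilde A\) and \(B=\widetilde B\) as multisets. This proves the "if and only if" statement.

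The first sentence of the corollary, that jump positions and the shape of \(g^C\) are invisible, is then a reading of what the quadruple records. The increments \(c_\pm\) in \eqref{eq:cpm} depend only on the values of \(g^C\) at \(a\), \(x_0\) and \(b\), not on how \(g^C\) behaves in between. The multisets \(A\) and \(B\) record only the sizes \(\Delta g(s)\) and which side of \(x_0\) each jump lies on, not where \(s\) is. Any modification of \(g\) that preserves these data leaves \(\mathcal L\) unchanged.

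To confirm that such modifications really stay inside the class, I would reuse the construction in the converse half of Theorem~\ref{thm:image}. There, arbitrary distinct jump locations and any continuous nondecreasing profile \(h\) with the right increments produce admissible derivators, and the admissibility argument on \((1,2)\) does not depend on these choices.

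The only step needing care is the bookkeeping of the convention that a jump located at \(x_0\) itself belongs to \(A\), as fixed by \(D_g\cap[x_0,b)\) in \eqref{eq:AB}. Jumps at the endpoint \(b\) are excluded, while jumps at \(a\) belong to \(B\). These must be counted consistently in both derivators, which they are, since both use the same definition \eqref{eq:AB}. No analytic obstacle remains, because all the analytic work lies in Theorem~\ref{thm:fibres}.
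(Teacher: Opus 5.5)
Your proposal is correct and follows the paper's own route: the structure formula \eqref{eq:structure} of Theorem~\ref{thm:image} shows that the quadruple \((c_-,c_+,A,B)\) determines \(\mathcal L\), and Theorem~\ref{thm:fibres} shows that \(\mathcal L\) determines the quadruple, which is exactly how the paper states the fibre description in the last sentence of that theorem. Your extra paragraph reusing the converse construction is not needed for the corollary, but it does no harm.
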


\subsection{Positivity and positive definiteness}

The structural formula contains a positive interval part and a positive left-tail part, but the right-hand derivative evaluations need not be positive.  The next theorem gives the exact boundary.

\begin{theorem}[Positive-measure representation]\label{thm:positivity}
Let \(\mathcal L=\mathcal L_{c_-,c_+,A,B}\) be in the image described by Theorem~\ref{thm:image}.  Then the following are equivalent:
\begin{enumerate}
\item \(\mathcal L(p^2)\ge0\) for every \(p\in\mathbb R[t]\);
\item \(\mathcal L\) is the moment functional of a finite positive Borel measure on \(\mathbb R\);
\item \(|A|\le1\).
\end{enumerate}
If \(A=\emptyset\), put \(\alpha=0\); if \(A=\{\alpha\}\), retain that value.  In these cases a representing measure is
\begin{equation}\label{eq:nu}
 d\nu(t)=
 \mathbf 1_{[-c_-,c_+]}(t)\,dt
 +\alpha\,\delta_{c_+}
 +S_B(-c_--t)\mathbf 1_{(-\infty,-c_-]}(t)\,dt.
\end{equation}
Moreover, \(\mathcal L\) is positive definite, i.e. \(\mathcal L(p^2)>0\) for every nonzero polynomial \(p\), if and only if
\begin{equation}\label{eq:pdcriterion}
 |A|\le1
 \quad\text{and}\quad
 \bigl(c_-+c_+>0\ \text{or}\ B\ne\emptyset\bigr).
\end{equation}
\end{theorem}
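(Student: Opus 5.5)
The proof will run around the cycle (iii)\(\Rightarrow\)(ii)\(\Rightarrow\)(i)\(\Rightarrow\)(iii), and then characterise positive definiteness separately.

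\emph{(iii)\(\Rightarrow\)(ii).} If \(|A|\le1\), then \(e_1(A)=\alpha\) and \(e_j(A)=0\) for \(j\ge2\). The middle sum in \eqref{eq:structure} therefore reduces to \(\alpha p(c_+)\). Substituting into \eqref{eq:structure} shows that \(\mathcal L(p)=\int p\,d\nu\) with \(\nu\) as in \eqref{eq:nu}. The measure \(\nu\) is positive because \(S_B\ge0\). It is finite because \(\int_0^\infty S_B=\mathbb E X_B=\sum_{\beta\in B}\beta<\infty\). All its moments are finite since \(S_B\) decays exponentially: this is \eqref{eq:laplaceXB}, i.e. \(\mathbb E e^{\delta X_B}<\infty\).

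\emph{(ii)\(\Rightarrow\)(i).} This is immediate, since \(\mathcal L(p^2)=\int p^2\,d\nu\ge0\).

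\emph{(i)\(\Rightarrow\)(iii).} This is the main obstacle, and I would follow the route announced in the introduction. If \(\mathcal L\) is positive semidefinite on \(\mathbb R[t]\), Hamburger's theorem gives a positive measure \(\nu\) with \(\mathcal L(t^n)=\int t^n\,d\nu\). I would first check that \(\nu\) is determinate. The moments of \(\mathcal L\) grow at most like \(C\,n!\,R^n\), because \(\mathcal L(e^{zt})\) is analytic near \(0\) by Theorem~\ref{thm:master-gf}; Carleman's condition then gives determinacy. The same bound shows \(\int e^{\epsilon|t|}d\nu<\infty\) for small \(\epsilon\), since for a positive measure the even moments control this integral. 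Hence the Laplace transform \(\int e^{zt}d\nu\) is analytic in a strip \(|\Re z|<\epsilon\) and agrees with \eqref{eq:mastergf2} near \(0\).

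By analytic continuation along the imaginary axis (the strip contains it, and the right side of \eqref{eq:mastergf2} is analytic there since its poles lie on the negative real axis), for real \(y\) we get
\[
\hat\nu(y)=\int e^{iyt}d\nu(t)=\frac{1}{iy}\Bigl(e^{ic_+y}\Pi_A(iy)-\frac{e^{-ic_-y}}{\Pi_B(iy)}\Bigr).
\]
The left side is bounded by \(\nu(\mathbf R)<\infty\). On the right, \(|\Pi_B(iy)|\ge1\), so the second term is \(O(1/|y|)\). If \(|A|\ge2\), pick two elements \(\alpha_1,\alpha_2\in A\). Then
\[
|\Pi_A(iy)|=\prod_{\alpha\in A}(1+\alpha^2y^2)^{1/2}\ge \alpha_1\alpha_2 y^2,
\]
since every factor is at least \(1\). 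The first term therefore grows at least like \(|y|\), which contradicts boundedness. Hence \(|A|\le1\). The delicate points are the continuation, which needs the exponential-moment bound, and the identification of \(\nu\) with the functional on polynomials; I will verify both carefully.

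\emph{Positive definiteness.} Given \(|A|\le1\), \(\mathcal L(p^2)=\int p^2\,d\nu\). This vanishes for some nonzero \(p\) if and only if \(\nu\) has finite support. Now \(\nu\) has infinite support exactly when the Lebesgue part on \([-c_-,c_+]\) is nontrivial, i.e. \(c_-+c_+>0\), or the tail density is nonzero, i.e. \(B\neq\emptyset\). Indeed \(S_B(u)>0\) for all \(u\ge0\), because \(X_B\) has unbounded support when \(B\ne\emptyset\). Otherwise \(\nu=\alpha\delta_{c_+}\), which is finitely supported. When \(|A|\ge2\), positive definiteness already fails by (i). Together these give \eqref{eq:pdcriterion}.
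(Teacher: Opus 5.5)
Your proposal is correct and follows essentially the paper's route: Hamburger's theorem, an exponential-moment bound from Cauchy estimates, continuation of the bilateral Laplace transform to the imaginary axis, and a contradiction between the boundedness of the Fourier transform of a finite measure and the at least quadratic growth of \(|\Pi_A(iy)|\) when \(|A|\ge2\). The differences are only tactical. The paper first subtracts the positive part \(\nu_0\), so that the signed measure \(\mu-\nu_0\) has the entire transform \(e^{c_+z}(\Pi_A(z)-1)/z\); you instead keep the full transform and bound the \(\Pi_B^{-1}\) term by \(1/|y|\) using \(|\Pi_B(iy)|\ge1\), which only requires taking the strip narrow enough to miss the poles \(-1/\beta\). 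Your Carleman determinacy step is not needed for the argument.
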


\begin{proof}
The implication (ii)\(\Rightarrow\)(i) is immediate.  If \(|A|\le1\), formula \eqref{eq:structure} is exactly integration against \eqref{eq:nu}; the total mass is finite because
\[
 \int_0^\infty S_B(v)\,dv=\mathbb E X_B=\sum_{\beta\in B}\beta<\infty.
\]
Thus (iii)\(\Rightarrow\)(ii).

It remains to prove that two or more right-hand jumps are incompatible with positivity.  Assume, for contradiction, that \(|A|\ge2\) and \(\mathcal L(p^2)\ge0\) for every polynomial.  By the Hamburger moment theorem, there exists a finite positive Borel measure \(\mu\) whose moments are those of \(\mathcal L\).  We next justify that \(\mu\) has an exponential moment; this step is needed in order to identify a Laplace transform rather than only a formal moment series.

The function \(M(z):=\mathcal L(e^{zt})\) in \eqref{eq:mastergf2} is analytic in a disc \(|z|<R\) for some \(R>0\).  Cauchy's estimate therefore gives constants \(C,R_0>0\) such that
\[
 0\le m_{2n}\le C(2n)!R_0^{-2n},\qquad n\ge0.
\]
Choose \(0<\delta<R_0\).  Since \(e^{\delta|t|}\le2\cosh(\delta t)\), Tonelli's theorem yields
\begin{align*}
 \int_{\mathbb R}e^{\delta|t|}\,d\mu(t)
 &\le2\int_{\mathbb R}\cosh(\delta t)\,d\mu(t)\\
 &=2\sum_{n\ge0}\frac{\delta^{2n}}{(2n)!}m_{2n}
 \le2C\sum_{n\ge0}(\delta/R_0)^{2n}<\infty.
\end{align*}
Hence the bilateral Laplace transform of \(\mu\) is holomorphic in a strip containing the imaginary axis.  More explicitly, if \(|\Re z|<\delta\), then \(|e^{zt}|\le e^{\delta|t|}\), so dominated convergence permits termwise integration of the exponential series and gives
\[
 \int_{\mathbb R}e^{zt}\,d\mu(t)
 =\sum_{n=0}^\infty\frac{z^n}{n!}\int_{\mathbb R}t^n\,d\mu(t)
 =\sum_{n=0}^\infty m_n\frac{z^n}{n!}
 =M(z)
\]
for \(|z|\) sufficiently small.  Thus equality of moments really yields equality of analytic Laplace transforms near the origin, not merely equality of formal power series.

Let \(\nu_0\) be the positive measure obtained from \eqref{eq:nu} after setting \(\alpha=0\), namely the interval part plus the left-tail part. We verify directly that \(\nu_0\) has an exponential moment. Choose \(\delta>0\) smaller, if necessary, than the constant used in \eqref{eq:laplaceXB}. The interval contribution is finite because it is supported on the compact interval \([-c_-,c_+]\). For the tail, with the change of variables \(u=-c_--t\),
\[
 \int_{-\infty}^{-c_-} e^{\delta |t|}S_B(-c_--t)\,dt
 \le e^{\delta c_-}\int_0^\infty e^{\delta u}S_B(u)\,du.
\]
Since \(X_B\ge0\), Tonelli's theorem and the identity \(e^{\delta X_B}-1=\int_0^{X_B}\delta e^{\delta u}\,du\) give
\[
 \int_0^\infty e^{\delta u}S_B(u)\,du
 =\frac{\mathbb E(e^{\delta X_B})-1}{\delta}<\infty.
\]
Thus \(\nu_0\) has an exponential moment. The finite signed measure \(\sigma:=\mu-\nu_0\) consequently has a bilateral Laplace transform holomorphic in a strip around the imaginary axis.  From \eqref{eq:mastergf2}, equality of moments near \(z=0\) gives
\begin{equation}\label{eq:RA}
 \int_{\mathbb R}e^{zt}\,d\sigma(t)
 =R_A(z):=e^{c_+z}\frac{\Pi_A(z)-1}{z}
\end{equation}
for \(z\) near \(0\).  The apparent singularity of \(R_A\) at zero is removable, and \(R_A\) is entire.  The identity theorem extends \eqref{eq:RA} throughout the common strip and, in particular, to every \(z=iy\), \(y\in\mathbb R\).  Therefore
\[
 \widehat\sigma(y)=R_A(iy).
\]
The Fourier transform of a finite signed measure is bounded by its total variation, so \(|R_A(iy)|\) must be bounded in \(y\).

Choose two elements \(\alpha_1,\alpha_2\in A\), counted with multiplicity.  Since every factor has modulus at least one on the imaginary axis,
\[
 |\Pi_A(iy)|
 =\prod_{\alpha\in A}\sqrt{1+\alpha^2y^2}
 \ge\sqrt{1+\alpha_1^2y^2}\sqrt{1+\alpha_2^2y^2}.
\]
Hence \(|\Pi_A(iy)|\) grows at least quadratically, and
\[
 |R_A(iy)|
 =\frac{|\Pi_A(iy)-1|}{|y|}
 \ge\frac{|\Pi_A(iy)|-1}{|y|}\longrightarrow\infty.
\]
This contradicts boundedness of \(\widehat\sigma\).  Thus \(|A|\le1\), proving (i)\(\Rightarrow\)(iii).

Finally assume \(|A|\le1\).  The measure \(\nu\) in \eqref{eq:nu} has infinite support whenever \(c_-+c_+>0\), because it contains Lebesgue measure on a nondegenerate interval.  If \(B\ne\emptyset\), then \(S_B(v)>0\) for every \(v\ge0\), so the left tail also has infinite support.  A nonzero polynomial has only finitely many zeros, and therefore its square has strictly positive integral against any positive measure with infinite support.  This proves positive definiteness under \eqref{eq:pdcriterion}.  Conversely, if \(c_-=c_+=0\) and \(B=\emptyset\), then \(\nu\) is either the zero measure or the single atom \(\alpha\delta_0\), so a nonzero polynomial vanishing at \(0\) has zero squared norm.  Positive definiteness fails.  The criterion is proved.
\end{proof}

\begin{corollary}[Compactly supported positive members]\label{cor:attainable}
Within the normalized class \(x_0\in[a,b]\), a positive-definite member of the image has compact support if and only if \(B=\emptyset\).  In that case
\[
 d\nu(t)=\mathbf1_{[-c_-,c_+]}(t)\,dt+\alpha\delta_{c_+},
 \qquad \alpha\ge0,
\]
with \(c_-+c_+>0\).  Consequently no positive measure with a nonconstant absolutely continuous weight on a compact interval is obtained by the unweighted correspondence.  In particular, the non-Legendre Jacobi measures are excluded.  The Hermite and Laguerre measures are also excluded: every positive measure in \eqref{eq:nu} is bounded above by \(c_+\), whereas Hermite has two-sided unbounded support and Laguerre has support unbounded to the right.
\end{corollary}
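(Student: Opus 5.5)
The plan is to read the corollary off Theorem~\ref{thm:positivity} and the explicit representing measure \eqref{eq:nu}. First I will fix what "has compact support" means: the functional is representable by a positive measure, and that measure is compactly supported. For a positive-definite member, Theorem~\ref{thm:positivity} gives \(|A|\le1\) and \eqref{eq:pdcriterion}, and \(\mathcal L\) is represented by \(\nu\) in \eqref{eq:nu}. Since \(\nu\) has an exponential moment (shown in the proof of Theorem~\ref{thm:positivity}), the moment problem is determinate. Hence \(\nu\) is the \emph{unique} representing measure, and support properties of \(\mathcal L\) are exactly those of \(\nu\).

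The equivalence then splits into two directions. If \(B=\emptyset\), then \(S_\emptyset\equiv0\), so \(\nu=\mathbf1_{[-c_-,c_+]}dt+\alpha\delta_{c_+}\). This measure is supported in \([-c_-,c_+]\), and \eqref{eq:pdcriterion} reduces to \(c_-+c_+>0\). Conversely, if \(B\neq\emptyset\), then \(S_B(v)=\mathbb P(X_B>v)>0\) for every \(v\ge0\). This holds because \(X_B\ge E_\beta\) for any \(\beta\in B\), and \(E_\beta\) is exponential, so it exceeds any level with positive probability. The tail density is therefore positive on all of \((-\infty,-c_-]\), and \(\nu\) has unbounded support. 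By uniqueness, no other compactly supported representing measure exists.

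The consequences come next. In the compact case the absolutely continuous part of \(\nu\) has density \(\mathbf 1_{[-c_-,c_+]}\), which is constant on its support interval. By determinacy, any representing measure equals \(\nu\), so a nonconstant weight on a compact interval cannot occur. For Jacobi measures, an affine rescaling would normally be allowed. The image, however, is closed only under what \eqref{eq:structure} permits, and the statement concerns equality of functionals. So I only need to note that a Jacobi weight \((1-x)^a(1+x)^b\) with \((a,b)\neq(0,0)\) is not constant, and that any representing measure here has a constant density and at most one atom. For Hermite and Laguerre, I will observe that \(\operatorname{supp}\nu\subset(-\infty,c_+]\) in all positive cases, whereas those measures have support unbounded to the right. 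By determinacy, their moment functionals differ from any \(\nu\).

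The main obstacle I expect is the uniqueness step, which justifies transferring support statements from \(\nu\) to \(\mathcal L\). I plan to settle it with the Carleman condition, or directly with the analyticity of the Laplace transform in a strip, using the bound \(m_{2n}\le C(2n)!R_0^{-2n}\) already obtained in the proof of Theorem~\ref{thm:positivity}.
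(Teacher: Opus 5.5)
Your proposal is correct and follows essentially the same route as the paper: you read the support of \(\nu\) in \eqref{eq:nu} off Theorem~\ref{thm:positivity}, use \(S_B>0\) when \(B\neq\emptyset\), and invoke determinacy from the exponential-moment estimate to exclude the Jacobi, Hermite and Laguerre measures. The differences are minor refinements: you apply determinacy uniformly from the outset, whereas the paper cites it explicitly only for Hermite and Laguerre, and you justify \(S_B(v)>0\) via \(X_B\ge E_\beta\), which the paper merely asserts.
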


\begin{proof}
If \(B=\emptyset\), the tail term in \eqref{eq:nu} vanishes and the support is contained in \([-c_-,c_+]\); positive definiteness forces \(c_-+c_+>0\).  If \(B\ne\emptyset\), then \(S_B(v)>0\) for all \(v\ge0\), so the support contains the whole half-line \(( -\infty,-c_-]\) and is not compact.  The explicit form of Theorem~\ref{thm:positivity} also proves the exclusion statement.  When the support is compact, the absolutely continuous part of every positive-definite member is Lebesgue measure with constant density one on a nondegenerate interval, possibly supplemented by one endpoint atom.  Therefore a compactly supported positive measure whose absolutely continuous density is nonconstant cannot occur.  In particular, a Jacobi density \((1-t)^\alpha(1+t)^\beta\) can belong to the image only in the Legendre case \(\alpha=\beta=0\), up to the affine normalization already built into the interval.  Hermite is impossible because its support is unbounded in both directions, while Laguerre is impossible because its support is unbounded to the right, whereas every measure in \eqref{eq:nu} is bounded above by \(c_+\).  These two exclusions use the uniqueness of the representing measure: by the exponential-moment estimate established in the proof of Theorem~\ref{thm:positivity}, every positive-definite member of the image is a determinate moment functional, so it cannot be represented both by \eqref{eq:nu} and by the Hermite or Laguerre measure.
\end{proof}

\begin{remark}[Laguerre--P\'olya type I recognition]\label{rem:recognition}
When \(B=\emptyset\), define
\[
 H_{c_-}(z):=z\mathcal L(e^{zt})+e^{-c_-z}.
\]
Theorem~\ref{thm:image} says that attainability is equivalent to the existence of \(c_-\ge0\) for which
\begin{equation}\label{eq:LP}
 H_{c_-}(z)=e^{c_+z}\prod_{\alpha\in A}(1+\alpha z),
 \qquad c_+\ge0,
 \quad \sum_{\alpha\in A}\alpha<\infty.
\end{equation}
Because \(H_{c_-}(0)=1\), this is exactly the normalized Laguerre--P\'olya class of type I with non-positive zeros.  In the language of total positivity, the Aissen--Schoenberg--Whitney--Edrei characterization identifies the corresponding entire generating functions with P\'olya-frequency sequences \cite{ASW,Edrei,Karlin}.  The point here is not to enlarge that classical class, but to recognize it as the entire part of the Stieltjes image.
\end{remark}

\begin{remark}[Centres outside the orthogonality interval]\label{rem:outside}
The restriction \(x_0\in[a,b]\) is essential to the four-parameter orientation used above.  For example, if \(x_0>b\), both endpoint values in \eqref{eq:mastergf} involve reciprocal jump products, so \(\mathcal L(e^{zt})\) becomes a difference of two left-type terms.  An analogous formula holds for \(x_0<a\).  We do not assert the positivity classification of Theorem~\ref{thm:positivity} in those configurations; they constitute a separate image problem.
\end{remark}

\section{Explicit realizations and the weighted lattice extension}\label{sec:dictionary}

The image theorem makes it possible to read many examples directly from the structural parameters.  We nevertheless give the principal realizations in detail.  The finite-jump proofs provide an intrinsic check of the general theorem, while the lattice examples belong to a different, weighted correspondence \((g,\varrho)\mapsto\mathcal L_{g,\varrho}\) and must not be conflated with the unweighted image theorem.

\subsection{Finitely many jumps to the right of the centre}

Let \(k\ge1\), \(a_1,\ldots,a_k>0\), and
\[
 0\le c_1<c_2<\cdots<c_k<1.
\]
On \([-1,1]\) consider
\begin{equation}\label{eq:gk}
 g(x)=x+\sum_{i=1}^k a_i\chi_{(c_i,\infty)}(x),
\end{equation}
with centre \(x_0=0\) and orthogonality interval \([-1,1)\).  Allowing \(c_1=0\) includes the model derivator \(g_1\) exactly, rather than only as a limiting position.  Denote by
\[
 e_j(a_1,\ldots,a_k)
 =\sum_{1\le i_1<\cdots<i_j\le k}a_{i_1}\cdots a_{i_j}
\]
the elementary symmetric functions, and put \(e_0=1\).

\begin{theorem}\label{thm:kjumps}
For the derivator \eqref{eq:gk}, \((g,0)\) is admissible and
\begin{equation}\label{eq:krein}
 \mathcal L(p)=\int_{-1}^1p(t)\,dt
 +\sum_{j=1}^k e_j(a_1,\ldots,a_k)p^{(j-1)}(1),
 \qquad p\in\mathbb R[t].
\end{equation}
Thus the functional depends on the jump sizes but not on the jump positions.  More precisely, the positions disappear and the sizes survive only through their elementary symmetric functions.
\end{theorem}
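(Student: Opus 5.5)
The plan is to obtain Theorem~\ref{thm:kjumps} as a special case of Theorem~\ref{thm:image}. That requires two things: establishing admissibility, and reading off the quadruple \((c_-,c_+,A,B)\) for the derivator \eqref{eq:gk}.

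\textbf{Admissibility.} The hypothesis of Proposition~\ref{prop:admissible} is awkward to check directly at the jump points and at the endpoint \(1\). I would therefore copy the argument used in the converse half of Theorem~\ref{thm:image}. Take the interval \((c_k,1)\), which is nondegenerate because \(c_k<1\). On it \(g\) is \(x\) plus a constant, so \(D_g\) is the ordinary derivative there. Induction on \(n\) using \eqref{eq:dgm} (equivalently, the integral recursion \eqref{eq:gmonomials}, which on this interval reduces to Riemann integration of a function that is a polynomial there, plus a constant coming from \([0,c_k]\)) shows that \(g_{0,n}|_{(c_k,1)}\) is a monic polynomial of degree \(n\). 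A nontrivial finite relation \(\sum\lambda_n g_{0,n}=0\) would then restrict to a nonzero polynomial vanishing on an interval, which is impossible. If one prefers to stay inside \([-1,1]\), the interval \((-1,0)\) works equally well: there \(g(x)=x\) and \(g_{0,n}(x)=x^n\), by the same computation as in the model example.

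\textbf{Parameters.} The centre is \(x_0=0\in[-1,1]\). The continuous part is \(g^C(x)=x\), up to an additive constant, so \(c_-=g^C(0)-g^C(-1)=1\) and \(c_+=g^C(1)-g^C(0)=1\). Every jump point \(c_i\) lies in \([0,1)=[x_0,b)\); this uses the convention that \(\chi_{(c_i,\infty)}\) gives a left-continuous jump located at \(c_i\), which also covers the case \(c_1=0\). Hence \(A=\{a_1,\dots,a_k\}\) and \(B=\emptyset\). It follows that \(\Pi_A(z)=\prod_i(1+a_iz)=\sum_{j=0}^k e_j(a_1,\dots,a_k)z^j\), so \(e_j(A)\) coincides with the ordinary elementary symmetric function and vanishes for \(j>k\). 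Also \(S_\emptyset\equiv0\), so the tail term disappears.

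\textbf{Conclusion.} Substituting these parameters into \eqref{eq:structure} gives \(\int_{-1}^1 p\,dt+\sum_{j=1}^k e_j\,p^{(j-1)}(1)\), which is \eqref{eq:krein}. The positions \(c_i\) enter nowhere, which yields the final sentence of the theorem. For an independent check one can verify the generating function directly: \(e_z(1)=e^{z}\prod_i(1+a_iz)\) and \(e_z(-1)=e^{-z}\), and Theorem~\ref{thm:master-gf} then gives \((e^z\Pi_A(z)-e^{-z})/z\). The one step needing care is bookkeeping rather than substance: confirming that a jump sitting exactly at the centre (\(c_1=0\)) is counted in \(A\) and not in \(B\). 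This follows from the half-open conventions \([x_0,x)\) in \eqref{eq:stieltjes-exp} and \eqref{eq:AB}, and agrees with the explicit monomials \eqref{eq:explicitmonomials}.
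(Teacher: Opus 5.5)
Your proof is correct, but it follows a different route from the paper's.

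You obtain \eqref{eq:krein} as a corollary of Theorem~\ref{thm:image}. You read off $c_-=c_+=1$, $A=\{a_1,\dots,a_k\}$ and $B=\emptyset$, and substitute into \eqref{eq:structure}. Position-independence is then inherited from the fact that the positions never enter the quadruple.

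The paper deliberately avoids invoking the structure theorem here. As stated at the start of Section~\ref{sec:dictionary}, the finite-jump proofs are meant as an intrinsic check of it. The paper uses Lemma~\ref{lem:egf} and \eqref{eq:stieltjes-exp} only to derive the closed form \eqref{eq:gmon-k} of the monomials for $x>c_k$. It then evaluates $g_{0,n+1}(\pm1)$, inserts these values into the master moment formula \eqref{eq:master}, and recognizes $e_j\,n!/(n+1-j)!$ as $e_j\left.\frac{d^{j-1}}{dt^{j-1}}t^n\right|_{t=1}$.

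What each route buys:
\begin{itemize}
\item Your route is shorter and makes the theorem a transparent special case. However, it leans on the full proof of Theorem~\ref{thm:image}, including the splitting \eqref{eq:splitmaster} and the coefficient comparison, none of which is needed here since the tail term vanishes.
\item The paper's route gives the explicit monomials and the explicit moments $m_n$ as by-products. It also shows at the level of coefficients why the positions cancel.
\end{itemize}

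Both arguments rest on the same two evaluations, $e_z(1)=e^{z}\prod_i(1+a_iz)$ and $e_z(-1)=e^{-z}$. You already note these in your independent check.

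For admissibility, your alternative using $(-1,0)$ is exactly the paper's argument. Your primary choice $(c_k,1)$ also works and mirrors the converse part of Theorem~\ref{thm:image}.
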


\begin{proof}
Admissibility is immediate from the left side of the centre.  For every \(x<0\), the oriented interval from \(0\) to \(x\) contains no jump of \(g\), and the continuous part of \(g\) is the identity.  Induction in the definition of the Stieltjes monomials therefore gives
\[
 g_{0,n}(x)=x^n,\qquad -1\le x<0.
\]
If \(\sum_{n=0}^N\lambda_ng_{0,n}=0\), restriction to \((-1,0)\) yields the ordinary polynomial identity
\[
 \sum_{n=0}^N\lambda_nx^n=0,
\]
so every coefficient \(\lambda_n\) vanishes.

We now compute the right endpoint explicitly.  Since the continuous part of \(g\) equals \(x\), Lemma~\ref{lem:egf} and \eqref{eq:stieltjes-exp} give, for \(x>c_k\),
\begin{equation}\label{eq:egf-right-k}
 \sum_{n\ge0}g_{0,n}(x)\frac{z^n}{n!}
 =e^{xz}\prod_{i=1}^k(1+a_i z)
 =e^{xz}\sum_{j=0}^k e_j(a_1,\ldots,a_k)z^j.
\end{equation}
Expanding \(e^{xz}\) and collecting the coefficient of \(z^n\) gives
\begin{align*}
 \frac{g_{0,n}(x)}{n!}
 &=\sum_{j=0}^{\min\{k,n\}}e_j(a_1,\ldots,a_k)
   \frac{x^{n-j}}{(n-j)!},
\end{align*}
and therefore
\begin{equation}\label{eq:gmon-k}
 g_{0,n}(x)=
 \sum_{j=0}^{\min\{k,n\}}e_j(a_1,\ldots,a_k)
 \frac{n!}{(n-j)!}x^{n-j},\qquad x>c_k.
\end{equation}
This coefficient calculation is also the precise reason the jump positions cancel: each crossing contributes only the multiplicative factor \(1+a_i z\) to the Stieltjes exponential, independently of where the crossing occurs.

At the endpoints,
\[
 g_{0,n+1}(-1)=(-1)^{n+1}
\]
and, by \eqref{eq:gmon-k},
\[
 g_{0,n+1}(1)=
 \sum_{j=0}^{\min\{k,n+1\}}e_j(a_1,\ldots,a_k)
 \frac{(n+1)!}{(n+1-j)!}.
\]
The master formula \eqref{eq:master} now gives
\begin{align*}
 m_n
 &=\frac{1-(-1)^{n+1}}{n+1}
 +\sum_{j=1}^{\min\{k,n+1\}}e_j(a_1,\ldots,a_k)
 \frac{n!}{(n+1-j)!}\\
 &=\int_{-1}^1t^n\,dt
 +\sum_{j=1}^k e_j(a_1,\ldots,a_k)
 \left.\frac{d^{j-1}}{dt^{j-1}}t^n\right|_{t=1},
\end{align*}
where terms with \(j-1>n\) vanish.  Equality on every monomial proves \eqref{eq:krein} by linearity.
\end{proof}

\begin{corollary}[One right jump: Uvarov]\label{cor:k1}
For a single jump of size \(\alpha>0\), placed anywhere in \([0,1)\),
\[
 \mathcal L(p)=\int_{-1}^1p(t)\,dt+\alpha p(1).
\]
The associated monic orthogonal polynomials are the endpoint Uvarov perturbation of the monic Legendre system,
\begin{equation}\label{eq:uvarov}
 \widehat q_n(t)=\widehat P_n(t)
 -\frac{\alpha\widehat P_n(1)}{1+\alpha K_{n-1}(1,1)}K_{n-1}(t,1),
 \qquad K_{n-1}(1,1)=\frac{n^2}{2}.
\end{equation}
\end{corollary}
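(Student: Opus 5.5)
The first formula is Theorem~\ref{thm:kjumps} with $k=1$ and $a_1=\alpha$: since $e_1(\alpha)=\alpha$ and $e_j=0$ for $j\ge2$, \eqref{eq:krein} reduces to $\mathcal L(p)=\int_{-1}^1p\,dt+\alpha p(1)$. The position $c_1\in[0,1)$ does not enter, by that theorem. The same conclusion follows from Theorem~\ref{thm:image} with $c_-=c_+=1$, $A=\{\alpha\}$ and $B=\emptyset$. Because $\alpha>0$, Theorem~\ref{thm:positivity} shows that $\mathcal L$ is positive definite. Its representing measure is $\mathbf 1_{[-1,1]}dt+\alpha\delta_1$, so the monic orthogonal polynomials $\widehat q_n$ exist and are unique.

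For \eqref{eq:uvarov} I would repeat the interior-mass computation of Section~\ref{sec:gop}, moving the point from $0$ to $1$. I expand $\widehat q_n=\widehat P_n+\sum_{j<n}C_{j,n}\widehat P_j$ in the monic Legendre basis. Orthogonality of $\widehat q_n$ to $\widehat P_j$ with respect to $\mathcal L$, for $j<n$, gives
\[
\int_{-1}^1\widehat q_n\widehat P_j\,dt=-\alpha\,\widehat q_n(1)\widehat P_j(1).
\]
Hence $C_{j,n}=-\alpha\widehat q_n(1)\widehat P_j(1)/\|\widehat P_j\|^2$, and therefore $\widehat q_n(t)=\widehat P_n(t)-\alpha\widehat q_n(1)K_{n-1}(t,1)$. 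Evaluating at $t=1$ gives $\widehat q_n(1)\bigl(1+\alpha K_{n-1}(1,1)\bigr)=\widehat P_n(1)$. Since $K_{n-1}(1,1)>0$, the denominator is positive and we can solve for $\widehat q_n(1)$. Substituting back yields \eqref{eq:uvarov}.

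It remains to compute $K_{n-1}(1,1)=\sum_{j=0}^{n-1}P_j(1)^2/\|P_j\|^2$. The kernel does not depend on normalization, so I can use the standard Legendre polynomials, for which $P_j(1)=1$ and $\|P_j\|^2=2/(2j+1)$. Then
\[
K_{n-1}(1,1)=\sum_{j=0}^{n-1}\frac{2j+1}{2}=\frac{n^2}{2}.
\]
The only step needing any care is this kernel evaluation together with the normalization independence; everything else is routine.
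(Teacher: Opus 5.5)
Your proposal is correct and follows the paper's route: the functional is Theorem~\ref{thm:kjumps} with $k=1$, and \eqref{eq:uvarov} with $K_{n-1}(1,1)=n^2/2$ comes from the one-mass Uvarov formula and the Legendre Christoffel--Darboux kernel. The only difference is that you derive the Uvarov formula explicitly (with existence supplied by Theorem~\ref{thm:positivity}), where the paper simply cites it.
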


\begin{proof}
The functional is Theorem~\ref{thm:kjumps} with \(k=1\).  Formula \eqref{eq:uvarov} is the classical one-mass Uvarov formula \cite{Uvarov1969}; the endpoint kernel identity follows from the standard Legendre Christoffel--Darboux kernel.
\end{proof}

\begin{corollary}[Two or more right jumps]\label{cor:k2}
If \(k\ge2\), \eqref{eq:krein} contains derivative evaluations at \(t=1\).  Thus it is an endpoint Dirac-derivative perturbation rather than an Uvarov point-mass perturbation.  In particular, by Theorem~\ref{thm:positivity}, it is not representable by a positive Borel measure.  The coefficients of the derivative evaluations are constrained by the Stieltjes geometry to be the elementary symmetric functions \(e_j(a_1,\ldots,a_k)\).
\end{corollary}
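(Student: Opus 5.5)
The plan is to read Corollary~\ref{cor:k2} off Theorem~\ref{thm:kjumps} and Theorem~\ref{thm:positivity}, once the parameters of the derivator \eqref{eq:gk} are identified with the quadruple of Theorem~\ref{thm:image}.

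\textbf{Step 1: identify the parameters.} With centre $x_0=0$ and interval $[-1,1)$, the continuous part of \eqref{eq:gk} is the identity, so $c_-=c_+=1$. All jump points $c_i$ lie in $[0,1)=[x_0,b)$, so $A=\{a_1,\ldots,a_k\}$ and $B=\emptyset$. Admissibility is already part of Theorem~\ref{thm:kjumps}.

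\textbf{Step 2: derivative evaluations are present.} Formula \eqref{eq:krein} contains the term $e_2(a_1,\ldots,a_k)\,p'(1)$. Since all $a_i>0$ and $k\ge2$, we have $e_2\ge a_1a_2>0$, so this term is genuinely there. To show it is a true derivative perturbation and not a point mass in disguise, take $p(t)=t-1$. Then the point evaluation $p(1)$ is zero, while $p'(1)=1$, so the functional cannot be written using integrals plus multiples of $p(1)$ alone. The coefficient of $p^{(j-1)}(1)$ is exactly $e_j(a_1,\ldots,a_k)$, and this is the asserted symmetric-function constraint; it can be read directly from \eqref{eq:krein}, or equivalently from $\Pi_A$ in \eqref{eq:PiAB}.

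\textbf{Step 3: non-positivity.} Because $|A|=k\ge2$, the equivalence (ii)$\Leftrightarrow$(iii) of Theorem~\ref{thm:positivity} shows that $\mathcal L$ is not the moment functional of any finite positive Borel measure. By (i)$\Leftrightarrow$(iii), there is moreover some polynomial $p$ with $\mathcal L(p^2)<0$.

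I do not expect a real obstacle. The only care needed is the multiplicity convention: $A$ is a multiset, so $|A|=k$ even if some $a_i$ coincide. The distinct positions $c_i$ ensure there are $k$ separate jumps, so $|A|=k$ regardless.
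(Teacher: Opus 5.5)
Your proposal is correct and follows the same route the paper uses implicitly: identify $(c_-,c_+,A,B)=(1,1,\{a_1,\ldots,a_k\},\emptyset)$, read the derivative terms and their symmetric-function coefficients off \eqref{eq:krein}, and invoke Theorem~\ref{thm:positivity} with $|A|=k\ge2$. Note that your test polynomial $p(t)=t-1$ only excludes the specific form $\int_{-1}^1p\,dt+c\,p(1)$; it is your Step~3 that excludes every representation by a positive measure, and hence every positive point-mass (Uvarov-type) representation.
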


Functionals obtained by adding Dirac deltas and derivatives of Dirac deltas to a quasi-definite functional have been studied in a general setting; see, in particular, \cite{AlvarezNodarse2004}.  The specific contribution here is therefore not the abstract existence of such perturbations, but the classification of exactly how they arise from Stieltjes jumps, including the loss of all positional information and the symmetric-function constraint on their coefficients.

For \(k=2\), for example,
\[
 \mathcal L(p)=\int_{-1}^1p(t)\,dt+e_1p(1)+e_2p'(1),
 \qquad e_1=a_1+a_2,\quad e_2=a_1a_2.
\]
The first three Hankel determinants are
\[
 \widehat\Delta_1=e_1+2,
\]
\[
 \widehat\Delta_2=\frac13\bigl(8e_1-3e_2^2+12e_2+4\bigr),
\]
and
\[
 \widehat\Delta_3=\frac{16}{135}\bigl(9e_1-54e_2^2+36e_2+2\bigr).
\]
These finite determinants can remain positive for small \(e_2\), even though the full functional is not positive semidefinite.  This illustrates why finite Hankel tests cannot replace Theorem~\ref{thm:positivity}.

\subsection{A jump to the left: from a derivative expansion to a positive tail}

Let \(a>0\), \(-1<c<0\), and
\[
 g(x)=x+a\chi_{(c,\infty)}(x),
\]
again with centre \(0\) on \([-1,1)\).

\begin{theorem}\label{thm:left}
The pair \((g,0)\) is admissible.  Its Stieltjes monomials satisfy
\begin{equation}\label{eq:gmon-left}
 g_{0,n}(x)=
 \sum_{j=0}^n\frac{n!}{(n-j)!}(-a)^jx^{n-j},\qquad x\le c,
\end{equation}
and \(g_{0,n}(x)=x^n\) for \(x>c\).  The associated functional admits the two equivalent representations
\begin{equation}\label{eq:left-series}
 \mathcal L(p)=\int_{-1}^1p(t)\,dt
 +\sum_{j\ge1}(-1)^{j-1}a^jp^{(j-1)}(-1),
\end{equation}
where the sum is finite on every polynomial, and
\begin{equation}\label{eq:left-tail}
 {\displaystyle
 \mathcal L(p)=\int_{-1}^1p(t)\,dt
 +\int_{-\infty}^{-1}p(t)e^{(t+1)/a}\,dt.}
\end{equation}
In particular \(\mathcal L\) is positive definite, and neither representation depends on the jump position \(c\).
\end{theorem}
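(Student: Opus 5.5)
Admissibility comes first, mirroring the proof of Theorem~\ref{thm:kjumps} with the roles of the two sides exchanged. For \(x>0\) the oriented interval \([0,x)\) contains no jump and the continuous part of \(g\) is the identity, so induction on the defining recursion \eqref{eq:gmonomials} gives \(g_{0,n}(x)=x^n\) there. For \(c<x\le0\) the interval \([x,0)\) also avoids the jump at \(c\), which gives \(g_{0,n}(x)=x^n\) for all \(x>c\). Restricting a relation \(\sum\lambda_ng_{0,n}=0\) to \((0,1)\) then forces all \(\lambda_n=0\).

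For \(x\le c\) I will use Lemma~\ref{lem:egf} together with \eqref{eq:stieltjes-exp}. The interval \([x,0)\) now contains the single jump point \(c\), so
\[
 \sum_{n\ge0}g_{0,n}(x)\frac{z^n}{n!}=e^{xz}(1+az)^{-1}=e^{xz}\sum_{j\ge0}(-a)^jz^j
\]
for \(|z|<1/a\). Comparing the coefficient of \(z^n\) gives \eqref{eq:gmon-left}.

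Next I evaluate the master formula \eqref{eq:master} at the endpoints. We have \(g_{0,n+1}(1)=1\) and \(g_{0,n+1}(-1)=\sum_{j=0}^{n+1}\frac{(n+1)!}{(n+1-j)!}(-a)^j(-1)^{n+1-j}\). Separating the \(j=0\) term yields
\[
 m_n=\int_{-1}^1t^n\,dt-\sum_{j\ge1}(-a)^j\frac{n!}{(n+1-j)!}(-1)^{n+1-j}.
\]
Since \(\frac{n!}{(n+1-j)!}(-1)^{n+1-j}=\frac{d^{j-1}}{dt^{j-1}}t^n\big|_{t=-1}\), this is exactly \eqref{eq:left-series}, with sign \(-(-a)^j=(-1)^{j-1}a^j\). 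The sum stops once \(j-1>n\), and linearity extends the identity from monomials to all of \(\mathbb R[t]\).

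For \eqref{eq:left-tail} I specialize Theorem~\ref{thm:image} with \(c_-=c_+=1\), \(A=\emptyset\), \(B=\{a\}\). Then \(X_B\) is exponential with mean \(a\), so \(S_B(v)=e^{-v/a}\) and \(S_B(-1-t)=e^{(t+1)/a}\), and \eqref{eq:structure} becomes \eqref{eq:left-tail}. As an independent check, I would verify directly that \(\int_{-\infty}^{-1}t^ne^{(t+1)/a}\,dt\) reproduces the finite sum in \eqref{eq:left-series} by repeated integration by parts. Each step produces a boundary term at \(-1\) carrying one more factor of \(a\) and an alternating sign, and the exponential decay kills the terms at \(-\infty\).

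Positive definiteness follows from Theorem~\ref{thm:positivity}, since \(|A|=0\) and \(c_-+c_+=2>0\). More directly, \eqref{eq:left-tail} represents \(\mathcal L\) as integration against a positive density with infinite support. Independence from \(c\) is visible in both formulas. The main obstacle is sign bookkeeping: matching the alternating signs among \(g_{0,n+1}(-1)\), the derivative evaluations at \(-1\), and the integration by parts. I would fix this by checking the case \(n=0\), where both representations give \(m_0=2+a\).
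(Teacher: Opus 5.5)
Your proof is correct, but it departs from the paper's proof in two places.

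For the left restrictions of the monomials, the paper does not use Lemma~\ref{lem:egf}. It defines $q_n$ directly and checks $q_n'=nq_{n-1}$ on $(-1,c)$. It then checks the jump relation $c^n-q_n(c)=na\,q_{n-1}(c)$ through the finite operator identity $(I+aD)q_n=x^n$, and concludes from the defining recursion. Your generating-function argument is shorter and parallels the proof of Theorem~\ref{thm:kjumps}: the factor $(1+az)^{-1}$ produces the geometric series in $-a$ at once. However, it relies on the closed form \eqref{eq:stieltjes-exp} of the Stieltjes exponential to the left of the centre and on the uniqueness step inside Lemma~\ref{lem:egf}. The paper's verification is self-contained and uses only the recursion. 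Its operator $(I+aD)^{-1}$ is exactly the counterpart of your factor $(1+az)^{-1}$.

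For \eqref{eq:left-tail}, the paper's main argument is the repeated integration by parts that you list only as a check, and it never invokes Theorem~\ref{thm:image}. Deducing the tail from Theorem~\ref{thm:image} is logically sound: that theorem is proved independently of this one, $(g,0)$ is admissible, $0\in[-1,1]$, and your parameters $c_-=c_+=1$, $A=\emptyset$, $B=\{a\}$ are correct. The cost is that it brings in the Laplace-transform identity \eqref{eq:laplaceXB} and Tonelli's theorem for what is an elementary finite computation. It also removes the role the paper assigns to this result, namely an independent check of the general image theorem in the simplest left-jump case.

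Your direct positivity argument matches the paper's.
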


\begin{proof}
For \(x>c\), the oriented interval between the centre \(0\) and \(x\) does not cross the jump at \(c\).  Since the continuous part is the identity, induction in the monomial definition gives \(g_{0,n}(x)=x^n\).

For \(x\le c\), define the ordinary polynomial
\[
 q_n(x):=\sum_{j=0}^n\frac{n!}{(n-j)!}(-a)^jx^{n-j}.
\]
We verify that these are exactly the left restrictions of the Stieltjes monomials.  First,
\begin{align*}
 q_n'(x)
 &=\sum_{j=0}^{n-1}\frac{n!}{(n-j-1)!}(-a)^jx^{n-j-1}\\
 &=n\sum_{j=0}^{n-1}\frac{(n-1)!}{(n-1-j)!}(-a)^jx^{n-1-j}
 =nq_{n-1}(x).
\end{align*}
Thus the differential relation is correct on the continuous cell \(( -1,c)\).  At the jump, the Stieltjes integral requires the right value and the left value to satisfy
\begin{equation}\label{eq:jump-relation-left}
 g_{0,n}(c^+)-g_{0,n}(c)=na\,g_{0,n-1}(c).
\end{equation}
The polynomials \(q_n\) satisfy exactly this relation.  Indeed the finite geometric-operator identity
\[
 q_n=(I+aD)^{-1}x^n
 :=\sum_{j=0}^n(-aD)^j x^n
\]
is literal because \(D^{n+1}x^n=0\).  Hence
\[
 (I+aD)q_n=x^n,
\]
which, using \(q_n'=nq_{n-1}\), is equivalent at \(x=c\) to
\[
 c^n-q_n(c)=a q_n'(c)=na q_{n-1}(c).
\]
Since the right restriction is \(c^n\), this is \eqref{eq:jump-relation-left}.  Together with the common initial normalization at the centre, uniqueness of the recursive Stieltjes integral construction yields \(g_{0,n}(x)=q_n(x)\) on the left.  This proves \eqref{eq:gmon-left}.  Admissibility follows by restriction to \((c,1)\), where the monomials are the ordinary powers \(x^n\).

At the right endpoint, \(g_{0,n+1}(1)=1\).  At the left endpoint, \eqref{eq:gmon-left} gives
\[
 g_{0,n+1}(-1)=
 \sum_{j=0}^{n+1}\frac{(n+1)!}{(n+1-j)!}(-a)^j(-1)^{n+1-j}.
\]
Substitution in \eqref{eq:master} gives
\[
 m_n=\int_{-1}^1t^n\,dt
 +\sum_{j=1}^{n+1}(-1)^{j-1}a^j
 \left.\frac{d^{j-1}}{dt^{j-1}}t^n\right|_{t=-1},
\]
which is \eqref{eq:left-series} on monomials and hence on all polynomials.

We finally identify the derivative expansion with an ordinary tail integral.  Let \(p\) have degree \(d\).  Repeated integration by parts is legitimate because every polynomial times \(e^{(t+1)/a}\) tends to zero as \(t\to-\infty\).  One step gives
\[
 \int_{-\infty}^{-1}p(t)e^{(t+1)/a}\,dt
 =a p(-1)-a\int_{-\infty}^{-1}p'(t)e^{(t+1)/a}\,dt.
\]
Repeating this identity \(d\) more times and using \(p^{(d+1)}=0\) yields the exact finite formula
\[
 \int_{-\infty}^{-1}p(t)e^{(t+1)/a}\,dt
 =\sum_{r=0}^d(-1)^ra^{r+1}p^{(r)}(-1),
\]
which is precisely the second term of \eqref{eq:left-series} after setting \(j=r+1\).  Thus \eqref{eq:left-tail} holds.  Its measure has positive density on \(( -\infty,1)\), so every nonzero polynomial has strictly positive squared norm.  Positive definiteness follows.
\end{proof}

Theorem~\ref{thm:left} explains the orientation already visible in Theorem~\ref{thm:image}.  A right jump produces a polynomial factor \((1+az)\), hence a local endpoint operation.  A left jump produces its reciprocal, hence a resolvent-like integral operation and a smooth tail.  The earlier ``infinite derivative order'' description is therefore only a moment expansion; the underlying functional is an ordinary positive measure.

\subsection{Weighted unit-lattice derivators and factorial moments}

The following construction is a weighted extension and is not a special case of the unweighted map classified in Section~\ref{sec:image}.  Let
\begin{equation}\label{eq:g2}
 g_2(x)=\lceil x\rceil,\qquad x\ge0,
\end{equation}
with centre \(0\).  Its Stieltjes measure is counting measure on \(\mathbb N_0\).  The pair \((g_2,0)\) is admissible: by \eqref{eq:falling} below, a finite relation among the \(g\)-monomials restricts on \(\mathbb N_0\) to a relation among the falling-factorial polynomials \(s^{\underline n}\); these form a basis of \(\mathbb R[s]\), so all coefficients must vanish.  Thus admissibility concerns the derivator itself and is unaffected by whether a subsequently chosen weight has finite or infinite support.  At integer arguments
\begin{equation}\label{eq:falling}
 g_{0,n}(s)=s^{\underline n}:=s(s-1)\cdots(s-n+1).
\end{equation}
Let \(\varrho:\mathbb N_0\to[0,\infty)\) satisfy an exponential-moment condition
\begin{equation}\label{eq:rhoexp}
 \sum_{s=0}^\infty\varrho(s)R^s<\infty
 \quad\text{for some }R>1.
\end{equation}
This assumption is stronger than the existence of all ordinary moments and is imposed because we shall use the probability generating function analytically in a neighbourhood of \(w=1\).  It holds in each of the three examples below: the Poisson generating function is entire, the negative binomial one converges for \(|w|<1/q\) and \(1<1/q\), and the binomial one is a polynomial.  Define
\[
 \langle u,v\rangle_{S,\varrho}
 :=\sum_{s=0}^\infty (u\star v)(s)\varrho(s),
\qquad
 \mathcal L_{\varrho}(t^n):=\sum_{s=0}^\infty s^{\underline n}\varrho(s).
\]

\begin{theorem}[Factorial-moment transfer]\label{thm:discrete}
Under \eqref{eq:rhoexp}, the moments of \(\mathcal L_\varrho\) are the factorial moments of \(\varrho\),
\begin{equation}\label{eq:factmom}
 m_n=\mathcal L_\varrho(t^n)
 =\sum_{s=0}^\infty s^{\underline n}\varrho(s).
\end{equation}
If
\[
 \Phi_\varrho(w):=\sum_{s=0}^\infty\varrho(s)w^s,
\]
then
\begin{equation}\label{eq:pgf}
 \mathcal L_\varrho(e^{zt})=\Phi_\varrho(1+z)
\end{equation}
whenever \(|1+z|<R\), and in particular in a neighbourhood of \(z=0\).
\end{theorem}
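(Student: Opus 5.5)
The plan has three steps: identify the monomials on the lattice, read off the moments, and then sum the generating series using Newton's binomial series.

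First, I will justify \eqref{eq:falling}. The Stieltjes measure of \(g_2\) is counting measure on \(\mathbb N_0\): \(\mu_{g_2}\) has an atom of mass one at each integer \(k\ge0\), since \(\Delta g_2(k)=1\) there, and no continuous part on \([0,\infty)\). The defining recursion then becomes, for an integer \(s\ge0\),
\[
g_{0,n}(s)=n\sum_{k=0}^{s-1}g_{0,n-1}(k).
\]
I will check by induction that \(s^{\underline n}\) satisfies the same recursion. The reason is the discrete identity
\[
n\sum_{k=0}^{s-1}k^{\underline{n-1}}=s^{\underline n},
\]
which follows by telescoping \((k+1)^{\underline n}-k^{\underline n}=n\,k^{\underline{n-1}}\). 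Both sides also vanish at \(s=0\) for \(n\ge1\), which completes the induction. By the definition of \(\langle\cdot,\cdot\rangle_{S,\varrho}\) and the \(\star\)-product, exactly as in Theorem~\ref{thm:transfer-an}, the pulled-back functional sends \(t^n\) to \(\sum_s g_{0,n}(s)\varrho(s)=\sum_s s^{\underline n}\varrho(s)\). This is \eqref{eq:factmom}. Each series converges absolutely, because \(s^{\underline n}\le s^n\le C_nR^s\) and \(\sum_s\varrho(s)R^s<\infty\).

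For \eqref{eq:pgf}, I will fix \(z\) with \(|z|<R-1\) and use Newton's expansion \((1+z)^s=\sum_{n\ge0}s^{\underline n}z^n/n!\), which for integer \(s\ge0\) is a finite sum. Then
\[
\Phi_\varrho(1+z)=\sum_s\varrho(s)\sum_n s^{\underline n}\frac{z^n}{n!}.
\]
The main obstacle is justifying the interchange of the two summations. I will use the bound \(|s^{\underline n}|\,|z|^n/n!\le\binom{s}{n}|z|^n\), which holds because \(s^{\underline n}\ge0\) for \(s,n\ge0\). It gives the absolute double sum
\[
\sum_s\varrho(s)\sum_n\binom sn|z|^n=\sum_s\varrho(s)(1+|z|)^s,
\]
which is finite since \(1+|z|<R\). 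Fubini for series therefore allows the swap, giving
\[
\Phi_\varrho(1+z)=\sum_n m_n\frac{z^n}{n!}=\mathcal L_\varrho(e^{zt})
\]
on the disc \(|z|<R-1\). This disc is a neighbourhood of \(0\).

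The statement asserts the identity for \(|1+z|<R\). That region is the disc of convergence of \(\Phi_\varrho(1+\cdot)\), but it may be larger than the disc where the exponential series of the moments converges. So for points with \(|1+z|<R\) but \(|z|\ge R-1\), I will interpret \(\mathcal L_\varrho(e^{zt})\) through analytic continuation of the identity from the small disc. The substantive content is the identity near \(z=0\) together with the absolute-convergence bound above.
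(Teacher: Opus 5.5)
Your proposal is correct and follows the paper's argument: you identify $g_{0,n}(s)=s^{\underline n}$ on $\mathbb N_0$, interchange the double sum by absolute convergence under \eqref{eq:rhoexp}, and apply the binomial theorem. Your telescoping proof of \eqref{eq:falling} and the explicit dominating series $\sum_s\varrho(s)(1+|z|)^s$ on $|z|<R-1$ supply details the paper leaves implicit.

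Your caveat about the full region $|1+z|<R$ is also justified. The paper's proof likewise only covers $z$ near $0$, and outside the disc of convergence of the moment series the left-hand side has meaning only through analytic continuation. For instance, take the Meixner weight with $q=1/2$, $R=1.9$, $z=-2$: then $|1+z|=1<R$, but the series $\sum_n(\beta)_n z^n/n!$ diverges.
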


\begin{proof}
Formula \eqref{eq:factmom} follows directly from \eqref{eq:falling}.  For the generating function, absolute convergence under \eqref{eq:rhoexp} permits interchange of the two sums for \(z\) near zero:
\begin{align*}
 \mathcal L_\varrho(e^{zt})
 &=\sum_{n=0}^\infty\frac{z^n}{n!}
   \sum_{s=0}^\infty s^{\underline n}\varrho(s)\\
 &=\sum_{s=0}^\infty\varrho(s)
   \sum_{n=0}^s\binom{s}{n}z^n
 =\sum_{s=0}^\infty\varrho(s)(1+z)^s
 =\Phi_\varrho(1+z).
\end{align*}
The inner sum is finite because \(s^{\underline n}=0\) for \(n>s\).
\end{proof}

\begin{example}[Meixner weight: scaled Laguerre]\label{ex:meixner}
Let
\[
 \varrho(s)=(1-q)^\beta\frac{(\beta)_s}{s!}q^s,
 \qquad \beta>0,\quad0<q<1.
\]
Then
\[
 \Phi_\varrho(w)=\left(\frac{1-q}{1-qw}\right)^\beta,
\]
so, with \(\lambda=q/(1-q)\),
\[
 \mathcal L_\varrho(e^{zt})=(1-\lambda z)^{-\beta}.
\]
This is the moment generating function of the Gamma density
\[
 \frac{1}{\lambda^\beta\Gamma(\beta)}t^{\beta-1}e^{-t/\lambda}\mathbf1_{(0,\infty)}(t)\,dt.
\]
Hence the transported monic orthogonal polynomials are
\begin{equation}\label{eq:scaled-laguerre}
 q_n(t)=(-1)^n n!\lambda^n L_n^{(\beta-1)}(t/\lambda),
\end{equation}
and the corresponding \(g\)-polynomials are \(\Phi(q_n)\).  The scale factor in \eqref{eq:scaled-laguerre} is essential unless \(\lambda=1\).
\end{example}

\begin{example}[Charlier weight: complete degeneracy]\label{ex:charlier}
Let
\[
 \varrho(s)=e^{-\mu}\frac{\mu^s}{s!},\qquad \mu>0.
\]
Then \(\Phi_\varrho(w)=e^{\mu(w-1)}\) and
\[
 \mathcal L_\varrho(e^{zt})=e^{\mu z}.
\]
Therefore \(\mathcal L_\varrho(p)=p(\mu)\).  Its Hankel matrix has rank one: \(\widehat\Delta_1=1\) and \(\widehat\Delta_n=0\) for every \(n\ge2\).  Thus no infinite sequence with nonzero orthogonality constants exists for the \(\star\)-orthogonality, despite the fact that the original pointwise-product discrete weight is the classical Charlier weight.
\end{example}

\begin{example}[Krawtchouk weight: a finite-order derivative functional]\label{ex:krawtchouk}
Let
\[
 \varrho(s)=\binom Ns p^s(1-p)^{N-s},
 \qquad s=0,\ldots,N,\quad0<p<1.
\]
The probability generating function is \(\Phi_\varrho(w)=(1-p+pw)^N\), so
\begin{equation}\label{eq:krawmgf}
 \mathcal L_\varrho(e^{zt})=(1+pz)^N
\end{equation}
and
\begin{equation}\label{eq:krawmom}
 m_n=N^{\underline n}p^n,
 \qquad m_n=0\quad(n>N).
\end{equation}
The functional is therefore
\begin{equation}\label{eq:krawfunctional}
 {\displaystyle
 \mathcal L_\varrho(q)=
 \sum_{j=0}^N\binom Nj p^j q^{(j)}(0).}
\end{equation}
In particular it is not the moment functional of the original positive Krawtchouk measure.  Already
\[
 \mathcal L_\varrho\bigl((t-Np)^2\bigr)
 =m_2-m_1^2=-Np^2<0.
\]

For completeness, the Hankel determinants can be computed in closed form.  For an \(n\times n\) Hankel matrix \((m_{i+j})_{i,j=0}^{n-1}\), factor \(p^i\) from row \(i\) and \(p^j\) from column \(j\) to obtain
\[
 \widehat\Delta_n
 =p^{n(n-1)}\det\bigl(N^{\underline{i+j}}\bigr)_{i,j=0}^{n-1}.
\]
Since \(N^{\underline k}=(-1)^k(-N)_k\), multiplying row \(i\) and column \(j\) by the corresponding signs shows that the signs cancel in the determinant.  It remains to evaluate
\[
 D_n(a):=\det\bigl((a)_{i+j}\bigr)_{i,j=0}^{n-1}
\]
with \(a=-N\).  The factorization \((a)_{i+j}=(a)_i(a+i)_j\) gives
\[
 D_n(a)=\left(\prod_{i=0}^{n-1}(a)_i\right)
 \det\bigl((a+i)_j\bigr)_{i,j=0}^{n-1}.
\]
For each fixed \(j\), the function \(i\mapsto(a+i)_j\) is a monic polynomial in \(i\) of degree \(j\).  Replacing the monomial basis \(1,i,\ldots,i^{n-1}\) by these monic polynomials is a unit upper-triangular change of basis, so the last determinant equals the Vandermonde determinant at the nodes \(0,1,\ldots,n-1\):
\[
 \det\bigl((a+i)_j\bigr)_{i,j=0}^{n-1}
 =\prod_{0\le i<k\le n-1}(k-i)
 =\prod_{j=0}^{n-1}j!.
\]
Consequently
\begin{equation}\label{eq:krawdet}
 {\displaystyle
 \widehat\Delta_n
 =p^{n(n-1)}\prod_{j=0}^{n-1}j!\,(-N)_j.}
\end{equation}
Consequently \(\widehat\Delta_n\ne0\) for \(1\le n\le N+1\), while \(\widehat\Delta_{N+2}=0\).  Thus monic orthogonal polynomials with nonzero orthogonality constants exist through degree \(N\); the next monic polynomial, although algebraically constructible from the nonsingular \((N+1)\times(N+1)\) moment block and orthogonal to every polynomial of degree at most \(N\), is isotropic: its self-pairing is zero because \(h_{N+1}=\widehat\Delta_{N+2}/\widehat\Delta_{N+1}=0\).
\end{example}

\begin{remark}[Continuous weighted case]
If \(g\) is continuous and strictly increasing, then \(u=g(x)-g(x_0)\) is an ordinary coordinate and \(\Phi(p)=p(u)\).  A nonnegative function weight \(\varrho\) therefore produces the usual absolutely continuous weighted moment functional in the variable \(u\), obtained by pushing \(\varrho\,d\mu_g\) forward under \(x\mapsto u\).  This recovers ordinary weighted polynomial theory in the new coordinate; it does \emph{not} imply that every abstract moment functional is obtained from a function weight.
\end{remark}

\section{The model derivator $g_{1}$}\label{sec:model}

In this section we analyse in detail the $\star$-orthogonal $g$-polynomials for the model derivator defined in~\eqref{eq:ex1}, both because it is the simplest nontrivial case and because it illustrates every item of Corollary~\ref{cor:transfer}. By Corollary~\ref{cor:k1}, the corresponding moment functional is the Legendre functional modified by a mass $\alpha$ at $t=1$, and the family below is the $\Phi$-image of the Uvarov family \eqref{eq:uvarov}; we nevertheless present the computation intrinsically, in the $g$-monomial basis and with the Stieltjes derivative, since it is that realisation, and not the abstract family, which is specific to the present framework. We shall introduce first a second-order $g$-differential equation having $g$-polynomial solutions, and later derive that these polynomials are indeed $g$-orthogonal.

\subsection{The $\star$-orthogonal $g$-polynomials of $g_{1}$}\label{sub:model-main}

Let $g=g_{1}$ be the derivator introduced in \eqref{eq:ex1}. Notice that the $g$-monomials explicitly given in \eqref{eq:explicitmonomials} are no longer symmetric, unlike in the continuous case for the basis $\{x^{n}\}_{n}$ so that many properties of symmetry of Legendre polynomials will not be extended to this new situation.

Let us consider the following second order $g$-differential equation
\begin{equation}\label{eq:theequation}
\sigma_{3}(x;n) \star (P_{n}(x;g))''_{g} + \tau_{2}(x;n) \star (P_{n}(x;g))'_{g} + \varpi_{1}(x;n)  \star P_{n}(x;g)=0,
\end{equation}
where $\sigma_{3}(x;n)$ is a $g$-polynomial of degree $3$, $\tau_{2}(x;n)$ is a $g$-polynomial of degree $2$, and $\varpi_{1}(x;n)$ is a $g$-polynomial of degree $1$:
\begin{equation}\label{eq:thepolynomials}
\begin{cases}
\sigma_{3}(x;n)=&\left(\alpha  n^2+2\right) \left(\alpha  (n+1)^2+2\right) (g_{0,3}(x)-g_{0,1}(x)) \\
&+ (-\alpha  n (n+1) (\alpha  n (n+1)+4)+2 \alpha -4) g_{0,2}(x) \\
&+ (\alpha  (n (n+1) (\alpha  n (n+1)+4)-2)+4) g_{0,0}(x), \\[3mm]
\tau_{2}(x;n)=&(2 \left(\alpha  n^2+2\right) \left(\alpha  (n+1)^2+2\right)) g_{0,2}(x) \\
&+(-2 (\alpha  (n (n+1) (\alpha  n (n+1)+4)-4)+4))g_{0,1}(x) + 4 \alpha g_{0,0}(x) {,}\\[3mm]
\varpi_{1}(x;n)=&n (n+1) \left(\left(\alpha  \left(n (n+1) \left(\alpha
   \left(n^2+n-2\right)+4\right)-6\right)+4\right)g_{0,0}(x) \right. \\
   & \left.-\left(\alpha
   n^2+2\right) \left(\alpha  (n+1)^2+2\right) g_{0,1}(x)\right).
\end{cases}
\end{equation}
Let us assume that the above equation has a $g$-polynomial solution that can be expanded in terms of the $g$-monomials  \eqref{eq:explicitmonomials} as in \eqref{eq:expansion}. By using \eqref{eq:dgm} and
\begin{equation}
(u(x) \star v(x))'_{g}=(u(x))'_{g} \star v(x) + u(x) \star (v(x))'_{g},
\end{equation}
{ Indeed, this identity is first checked on the basis of $g$-monomials, since
\[
(g_{0,i}\star g_{0,j})'_g=(g_{0,i+j})'_g=(i+j)g_{0,i+j-1}
\]
and
\[
(g_{0,i})'_g\star g_{0,j}+g_{0,i}\star (g_{0,j})'_g
=i g_{0,i-1}\star g_{0,j}+j g_{0,i}\star g_{0,j-1}=(i+j)g_{0,i+j-1},
\]
and then extended by bilinearity.}
{
We obtain, after collecting coefficients of each $g_{0,j}$, two independent recurrence schemes for the coefficients $c_{n,j}$ in the expansion, after substituting the expansion
\[
P_n(x;g)=\sum_{j=0}^n c_{n,j}g_{0,j}(x)
\]
into the differential equation and using
\[
(g_{0,j})'_g=jg_{0,j-1},
\qquad
(g_{0,j})''_g=j(j-1)g_{0,j-2},
\qquad
 g_{0,i}\star g_{0,j}=g_{0,i+j},
\]
one obtains an explicit finite recurrence for the coefficients. More conveniently,
the resulting solution can be organized into two families according to the parity of
$n-j$:
\[
c_{n,n-2r}=U_{r,n},
\qquad
c_{n,n-(2r+1)}=T_{r,n}.
\]
These two families satisfy the first-order recurrence relations
\[
U_{0,n}=1,
\]
\[
U_{r,n}
=
-\frac{\alpha(n^2+2r)+2}{\alpha(n^2+2r-2)+2}
\frac{(n-2r+2)(n-2r+1)}{2r(2n-2r+1)}U_{r-1,n},
\qquad r\geq1,
\]
and
\[
T_{0,n}=-\frac{\alpha n}{\alpha n^2+2},
\]
\[
T_{r,n}
=
-
\frac{(n-2r+1)(n-2r)}{2r(2n-2r+1)}T_{r-1,n},
\qquad r\geq1.
\]
{Solving these recurrences yields}
}
\begin{equation}\label{eq:explg1}
{
\hat{P}_{n}(x;g)
=
 g_{0,n}(x)
 +\sum_{r=1}^{\lfloor n/2\rfloor}U_{r,n}g_{0,n-2r}(x)
 +\sum_{r=0}^{\lfloor (n-1)/2\rfloor}T_{r,n}g_{0,n-2r-1}(x),
}
\end{equation}
{where}
\begin{equation}\label{eq:Uclosed-paper}
{
U_{r,n}
=
(-1)^r
\frac{\bigl(\alpha(n^2+2r)+2\bigr)n!}
{2^r r!(\alpha n^2+2)(n-2r)!\displaystyle\prod_{s=0}^{r-1}(2n-2s-1)},
\qquad 1\le r\le \left\lfloor\frac n2\right\rfloor,
}
\end{equation}
\begin{equation}\label{eq:Tclosed-paper}
{
T_{r,n}
=
(-1)^{r+1}
\frac{\alpha n!}
{2^r r!(\alpha n^2+2)(n-2r-1)!\displaystyle\prod_{s=0}^{r-1}(2n-2s-1)},
\qquad 0\le r\le \left\lfloor\frac{n-1}{2}\right\rfloor,
}
\end{equation}
{and empty products are understood to be equal to $1$.}
\begin{remark}
{From \eqref{eq:Tclosed-paper} we have}
\begin{equation}
{\lim_{\alpha \to 0} T_{r,n}=0.}
\end{equation}
{Moreover, from \eqref{eq:Uclosed-paper},}
\begin{equation}
{\lim_{\alpha \to 0} U_{r,n}=(-1)^r \frac{n!}{2^r r!(n-2r)!\displaystyle\prod_{s=0}^{r-1}(2n-2s-1)}, \quad r\geq1,}
\end{equation}
{which coincides with the even coefficients of the $n$-th degree monic Legendre polynomial. Since, for the $g$-monomials,}
\begin{equation}
{\lim_{\alpha \to 0} g_{0,n}(x)=x^{n},}
\end{equation}
{we have}
\begin{equation}
{\lim_{\alpha \to 0} \hat{P}_{n}(x;g)=\hat{P}_{n}(x),}
\end{equation}
{where $\hat{P}_{n}(x)$ stands for the monic Legendre polynomial of degree $n$.}
\end{remark}

{Let us now introduce the moments}
\begin{equation}\label{eq:momentsS-def}
{v_{i,j}=\langle g_{0,i},g_{0,j} \rangle_{S}=
\int_{[-1,1)}\left(g_{0,i}\star g_{0,j}\right)d\mu_g
=\int_{[-1,1)}g_{0,i+j}\,d\mu_g.}
\end{equation}
{Using the explicit expression of the $g$-monomials and of the derivator $g=g_1$, these moments are computed as follows:}
\begin{equation}\label{eq:mom10}
{v_{0,0}=\langle g_{0,0},g_{0,0} \rangle_{S}=\int_{[-1,1)}1\,d\mu_g=2+\alpha,}
\end{equation}
{and, for $i+j>0$,}
\begin{multline}\label{eq:mom1n}
{v_{i,j}=\langle g_{0,i},g_{0,j} \rangle_{S}
=\int_{-1}^{0}s^{i+j}\,ds+
\int_{0}^{1}\left(s^{i+j}+\alpha(i+j)s^{i+j-1}\right)\,ds}\\
{=\frac{(-1)^{i+j}+1}{i+j+1}+\alpha
=\frac{\alpha(i+j+1)+(-1)^{i+j}+1}{i+j+1}.}
\end{multline}

{
\begin{lemma}
Let \(g=g_1\) be the derivator defined in \eqref{eq:ex1}, with
\(\alpha>0\). The bilinear form
\[
\langle u,v\rangle_S
=
\int_{[-1,1)}(u\star v)\,d\mu_g
\]
is symmetric and nondegenerate on \(\mathbb P_g\).

Moreover, if
\[
v_{i,j}:=
\langle g_{0,i},g_{0,j}\rangle_S,
\qquad i,j\geq0,
\]
and
\[
\widehat\Delta_n
:=
\det\left(v_{i,j}\right)_{i,j=0}^{n-1},
\qquad n\geq1,
\]
then
\[
\widehat\Delta_1=\alpha+2
\]
and, for every \(n\geq1\),
\[
\widehat\Delta_{n+1}
=
\widehat\Delta_n\,
\frac{2^{2n+1}(n!)^4}{(2n+1)((2n)!)^2}
\,
\frac{\alpha(n+1)^2+2}{\alpha n^2+2}.
\]
Equivalently, the following closed formula holds:
\[
\widehat\Delta_n
=
2^{n^2-1}(\alpha n^2+2)
\prod_{k=0}^{n-1}
\frac{(k!)^4}{(2k+1)((2k)!)^2},
\qquad n\geq1.
\]
In particular,
\[
\widehat\Delta_n>0,
\qquad n\geq1.
\]
\end{lemma}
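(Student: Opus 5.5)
By the transfer principle (Theorem~\ref{thm:transfer-an}) and Corollary~\ref{cor:k1}, the Gram matrix \((v_{i,j})\) is the Hankel matrix of \(\mathcal L(p)=\int_{-1}^1p\,dt+\alpha p(1)\). Indeed, \eqref{eq:mom1n} gives \(v_{i,j}=\int_{-1}^1 t^{i+j}dt+\alpha\cdot 1^{i+j}\) for \(i+j>0\), and \eqref{eq:mom10} gives the same formula for \(i=j=0\). Symmetry is immediate from commutativity of \(\star\). Nondegeneracy will follow once all \(\widehat\Delta_n\ne0\), via Theorem~\ref{thm:transfer-alg}. In fact \(\mathcal L\) is the moment functional of the positive measure \(dt|_{[-1,1]}+\alpha\delta_1\), which has infinite support, so positive definiteness and \(\widehat\Delta_n>0\) are also a direct consequence of Theorem~\ref{thm:positivity}. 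The substance of the lemma is therefore the explicit determinant formula.

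For the formula I would mimic the remark in Section~\ref{sec:gop}, but with the mass at \(t=1\) instead of \(t=0\). Writing \(\mathbf 1=(1,\dots,1)^T\), the matrix is \(H_n+\alpha\mathbf 1\mathbf 1^T\), with \(H_n\) the Legendre Hankel matrix \eqref{eq:hankelc}. The matrix determinant lemma gives
\[
\widehat\Delta_n=\det H_n\,(1+\alpha\,\mathbf 1^TH_n^{-1}\mathbf 1).
\]
Here \(\mathbf 1^TH_n^{-1}\mathbf 1=K_{n-1}(1,1)\) is the Christoffel--Darboux kernel of degree \(n-1\) at the point \(1\), because \(\mathbf 1\) is the vector of monomials evaluated at \(t=1\). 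Corollary~\ref{cor:k1} states \(K_{n-1}(1,1)=n^2/2\). This can be checked directly: the orthonormal Legendre polynomials satisfy \(p_k(1)^2=(2k+1)/2\), and \(\sum_{k<n}(2k+1)/2=n^2/2\). Hence
\[
\widehat\Delta_n=\det H_n\,\frac{\alpha n^2+2}{2}.
\]
Inserting \eqref{eq:dethankelc}, i.e.\ \(\det H_n=\prod_{k<n}2^{2k+1}(k!)^4/((2k+1)((2k)!)^2)\), and using \(\sum_{k<n}(2k+1)=n^2\), yields the closed formula with the prefactor \(2^{n^2-1}\).

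The recurrence for \(\widehat\Delta_{n+1}/\widehat\Delta_n\) then follows by dividing consecutive closed forms. The factor \(\det H_{n+1}/\det H_n\) is the squared monic Legendre norm \eqref{eq:norm}, and the remaining ratio is \((\alpha(n+1)^2+2)/(\alpha n^2+2)\). For the base case, the closed formula at \(n=1\) gives \(\widehat\Delta_1=\alpha+2\).

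The only real obstacle is the identification \(\mathbf 1^TH_n^{-1}\mathbf 1=K_{n-1}(1,1)\) together with the endpoint kernel value. I would justify the first by writing \(H_n=C\,\mathrm{diag}(\|\hat P_k\|^2)\,C^T\), where \(C\) is the unit lower-triangular matrix expressing the monomials in terms of the monic Legendre polynomials. The second follows from the normalization \(P_k(1)=1\) of the standard Legendre polynomials.
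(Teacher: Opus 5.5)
Your proposal is correct and follows essentially the same route as the paper: write the moment matrix as $H_n+\alpha\mathbf 1_n\mathbf 1_n^T$ and apply the matrix determinant lemma. Then identify $\mathbf 1_n^TH_n^{-1}\mathbf 1_n=K_{n-1}(1,1)=n^2/2$ and insert the Legendre Hankel determinant to get the closed formula, from which the recurrence, positivity and nondegeneracy follow; your factorization $H_n=C\,\mathrm{diag}(\|\hat P_k\|^2)C^T$ and your appeal to Theorem~\ref{thm:positivity} for positivity are valid but optional additions.
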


\begin{proof}
The symmetry follows from the commutativity of the \(\star\)-product. Indeed,
for the basis of \(g\)-monomials we have
\[
g_{0,i}\star g_{0,j}
=
g_{0,i+j}
=
g_{0,j}\star g_{0,i}.
\]
By bilinearity, this implies
\[
u\star v=v\star u
\]
for all \(u,v\in\mathbb P_g\). Hence
\[
\langle u,v\rangle_S
=
\int_{[-1,1)}(u\star v)\,d\mu_g
=
\int_{[-1,1)}(v\star u)\,d\mu_g
=
\langle v,u\rangle_S.
\]

{For the derivator \(g_1\), the moments are given by \eqref{eq:mom10} and \eqref{eq:mom1n}. Let $H_{n}$} defined in \eqref{eq:hankelc} be the classical Legendre Hankel matrix and let
\[
\mathbf 1_n=(1,\ldots,1)^T\in\mathbb R^n.
\]
The previous expression for the moments gives
\[
\left(v_{i,j}\right)_{i,j=0}^{n-1}
=
H_n+\alpha \mathbf 1_n\mathbf 1_n^T.
\]
By the matrix determinant lemma,
\[
\widehat\Delta_n
=
\det\left(H_n+\alpha \mathbf 1_n\mathbf 1_n^T\right)
=
\det H_n
\left(
1+\alpha\,\mathbf 1_n^T H_n^{-1}\mathbf 1_n
\right).
\]

The vector \(\mathbf 1_n\) is the vector of the monomial basis evaluated at
\(x=1\). Hence
\[
\mathbf 1_n^T H_n^{-1}\mathbf 1_n
=
K_{n-1}(1,1),
\]
where \(K_{n-1}\) is the Christoffel-Darboux kernel associated with the monic
Legendre polynomials. If \(\{\widehat P_j\}_{j\geq0}\) denotes the sequence of
monic Legendre polynomials, then
\[
K_{n-1}(1,1)
=
\sum_{j=0}^{n-1}
\frac{\widehat P_j(1)^2}{\|\widehat P_j\|^2}.
\]
Using the classical normalization of monic Legendre polynomials, one has
\[
\frac{\widehat P_j(1)^2}{\|\widehat P_j\|^2}
=
\frac{2j+1}{2}.
\]
Therefore,
\[
K_{n-1}(1,1)
=
\sum_{j=0}^{n-1}\frac{2j+1}{2}
=
\frac{n^2}{2}.
\]
Thus
\[
\widehat\Delta_n
=
\det H_n
\left(
1+\frac{\alpha n^2}{2}
\right).
\]

Now we use the classical expression for the Legendre Hankel determinant \eqref{eq:dethankelc} to obtain
\[
\widehat\Delta_n
=
\left(
\prod_{k=0}^{n-1}
\frac{2^{2k+1}(k!)^4}{(2k+1)((2k)!)^2}
\right)
\left(
1+\frac{\alpha n^2}{2}
\right),
\]
which can be rewritten as
\[
\widehat\Delta_n
=
2^{n^2}
\left(
1+\frac{\alpha n^2}{2}
\right)
\prod_{k=0}^{n-1}
\frac{(k!)^4}{(2k+1)((2k)!)^2}
=
2^{n^2-1}(\alpha n^2+2)
\prod_{k=0}^{n-1}
\frac{(k!)^4}{(2k+1)((2k)!)^2}.
\]
This proves the closed formula.

Let us now check that the closed formula is equivalent to the recurrence stated in
the lemma. For \(n=1\), the closed formula gives
\[
\widehat\Delta_1
=
2^{0}(\alpha+2)
\frac{(0!)^4}{1(0!)^2}
=
\alpha+2.
\]
Moreover, for \(n\geq1\),
\[
\frac{\widehat\Delta_{n+1}}{\widehat\Delta_n}
=
\frac{
2^{(n+1)^2-1}(\alpha(n+1)^2+2)
\displaystyle\prod_{k=0}^{n}
\frac{(k!)^4}{(2k+1)((2k)!)^2}
}{
2^{n^2-1}(\alpha n^2+2)
\displaystyle\prod_{k=0}^{n-1}
\frac{(k!)^4}{(2k+1)((2k)!)^2}
}.
\]
Therefore,
\[
\frac{\widehat\Delta_{n+1}}{\widehat\Delta_n}
=
2^{(n+1)^2-n^2}
\frac{\alpha(n+1)^2+2}{\alpha n^2+2}
\frac{(n!)^4}{(2n+1)((2n)!)^2}.
\]
Since
\[
(n+1)^2-n^2=2n+1,
\]
we obtain
\[
\frac{\widehat\Delta_{n+1}}{\widehat\Delta_n}
=
\frac{2^{2n+1}(n!)^4}{(2n+1)((2n)!)^2}
\,
\frac{\alpha(n+1)^2+2}{\alpha n^2+2}.
\]
Equivalently,
\[
\widehat\Delta_{n+1}
=
\widehat\Delta_n\,
\frac{2^{2n+1}(n!)^4}{(2n+1)((2n)!)^2}
\,
\frac{\alpha(n+1)^2+2}{\alpha n^2+2}.
\]
Thus the recurrence and the closed formula are equivalent.

Finally, since \(\alpha>0\), all the factors in the closed formula are strictly
positive. Hence
\[
\widehat\Delta_n>0,
\qquad n\geq1.
\]
In particular, all the moment matrices
\[
\left(v_{i,j}\right)_{i,j=0}^{n-1}
\]
are nonsingular. Therefore the bilinear form
\(\langle\cdot,\cdot\rangle_S\) is nondegenerate on \(\mathbb P_g\).
\end{proof}
}

By combining the explicit expression \eqref{eq:explg1} for the monic $g$-polynomial solution of \eqref{eq:theequation} as well as the latter explicit expressions for the moments {\eqref{eq:mom10} and \eqref{eq:mom1n}}, we obtain that
\begin{multline}
\langle \hat{P}_{n}(x;g),\hat{P}_{m}(x;g) \rangle_{S} = \int_{-1}^{1} (\hat{P}_{n}(s;g) \star \hat{P}_{m}(s;g)) d\mu_{g}(s) \\
= {\int_{-1}^{1} (\hat{P}_{n}(s;g) \star \hat{P}_{m}(s;g)) ds + \alpha (\hat{P}_{n}(\cdot;g) \star \hat{P}_{m}(\cdot;g))(0)} ={d_{n}^{2} \delta_{n,m}},
\end{multline}
{where
\begin{equation*}
d_{n}^{2}=\begin{cases}
\displaystyle {\alpha +2}, & n=0, \\[3mm]
\displaystyle  \frac{2^{2 n+1} \left(\alpha  (n+1)^2+2\right)}{(2 n+1) \left(\alpha  n^2+2\right) \binom{2 n}{n}^2}, & n \geq 1.
\end{cases}
\end{equation*}
}
As a consequence, we have obtained that the sequence of monic $g$-polynomials given explicitly in \eqref{eq:explg1} are indeed $g$-orthogonal $g$-polynomials. We shall refer to them as monic $g$-Legendre $g$-polynomials.

Let us now expand
\begin{equation}
{g_{0,1}(x) \star \hat{P}_{n}(x;g)=\hat{P}_{n+1}(x;g)+\sum_{j=0}^{n} c_{j,n} \hat{P}_{j}(x;g),}\quad {\hat P_{-1}(x;g):=0.}
\end{equation}
The Fourier coefficients can be computed as
\begin{equation}
c_{j,n}=\frac{\langle g_{0,1}(x) \star \hat{P}_{n}(x;g), \hat{P}_{j}(x;g) \rangle_{S}}{\langle \hat{P}_{j}(x;g), \hat{P}_{j}(x;g) \rangle_{S}}.
\end{equation}
Next, we use the symmetry property which can be easily proved
\begin{equation}
c_{j,n}=\frac{\langle g_{0,1}(x) \star \hat{P}_{n}(x;g), \hat{P}_{j}(x;g) \rangle_{S}}{\langle \hat{P}_{j}(x;g),\hat{P}_{j}(x;g) \rangle_{S}} =\frac{\langle \hat{P}_{n}(x;g), g_{0,1}(x) \star \hat{P}_{j}(x;g) \rangle_S}{\langle \hat{P}_{j}(x;g),\hat{P}_{j}(x;g) \rangle_{S}}.
\end{equation}
Since $g_{0,1}(x) \star \hat{P}_{j}(x;g) $ is a $g$-polynomial of degree at most $j+1$, by orthogonality all the coefficients from $j=0$ up to $n-2$ are zero. Therefore, by using the explicit expression of the $g$-polynomials given in \eqref{eq:explg1}, the $g$-differential equation \eqref{eq:theequation}, {after direct coefficient comparison}{,} we find that the monic Legendre $g$-orthogonal $g$-polynomials satisfy
\begin{equation}
g_{0,1}(x) \star \hat{P}_{n}(x;g)=\hat{P}_{n+1}(x;g)+ \beta_{n} \hat{P}_{n}(x;g) + \gamma_{n} \hat{P}_{n-1}(x;g),
\end{equation}
where the coefficients are explicitly given by
\begin{align}
\label{eq:betan}
\beta_{n}&=\frac{\alpha\left(2-\alpha n(n+1)\right)}{\left(\alpha n^{2}+2\right)\left(\alpha (n+1)^{2}+2\right)}, \\
\label{eq:gamman}
\gamma_{n}&=\frac{n^2 \left(\alpha  (n-1)^2+2\right) \left(\alpha  (n+1)^2+2\right)}{\left(4 n^2-1\right) \left(\alpha  n^2+2\right)^2},
\end{align}
(see Figures~\ref{figure2},~\ref{figure3} and \ref{figure4}). Both coefficients are, as Corollary~\ref{cor:transfer}(iii) predicts, the recurrence coefficients of the Uvarov--Legendre family \eqref{eq:uvarov}, and the denominators are the Uvarov factors $2\bigl(1+\alpha K_{n-1}(1,1)\bigr)=\alpha n^{2}+2$. We would like to emphasize that
\begin{equation}
\lim_{\alpha \to 0} \beta_{n}=0, \quad \lim_{\alpha \to 0} \gamma_{n}=\frac{n^2}{4 n^2-1},
\end{equation}
i.e. the coefficients of the {three-term} recurrence relation satisfied by classical monic Legendre polynomials \cite[Eq. (9.8.65)]{KOEKOEK}.

\begin{remark}
It is interesting to compute the limit of the polynomials \eqref{eq:thepolynomials} as $\alpha \to 0$. In that situation, $g_{0,n}(x) \to x^{n}$ and then
\begin{equation}
\begin{cases}
\displaystyle \lim_{\alpha \to 0} \sigma_{3}(x;n)=4 (-1 + x)^2 (1 + x), \\[3mm]
\displaystyle \lim_{\alpha \to 0} \tau_{2}(x;n)=8 (-1 + x) x, \\[3mm]
\displaystyle \lim_{\alpha \to 0} \varpi_{1}(x;n)=-4 n (1 + n) (-1 + x).
\end{cases}
\end{equation}
If we simplify the common factor $(x-1)$ and divide by $4$, we obtain the differential equation of Legendre polynomials.

\end{remark}

\begin{remark}
Classical continuous (discrete or their $q$-analogues, in linear or nonuniform lattices) orthogonal polynomials satisfy a {second-order} differential (difference, $q$-difference or on nonuniform lattices \cite{NSU}) of hypergeometric type. In the classical continuous situation the equation reads as
\begin{equation}
\sigma(x) y''(x)+\tau(x) y'(x)+\lambda_{n} y(x)=0,
\end{equation}
where $\sigma$ and $\tau$ are polynomials of at most second and first degree, respectively, and $\lambda_{n}$ is a constant. The hypergeometric character of the equation means that the derivative of any solution of the equation $v_{1}(x)=y'(x)$ satisfies an equation of the same type. More precisely,
\begin{equation}
\sigma(x) v_{1}''(x)+(\tau(x)+\sigma'(x)) v_{1}'(x)+(\lambda_{n}+\tau') v_{1}(x)=0.
\end{equation}

Some authors (see{, e.g.,} \cite{zarzo}) have studied the case in which $\tau(x)$ depends also on $n$, giving rise to varying weights.

When dealing with $g$-orthogonal $g$-polynomials the situation is more complicated. The $g$-orthogonal $g$-polynomials satisfy a second order $g$-differential equation, but the polynomials $\sigma$ and $\tau$ are of higher degree (so that, the constant multiplying $y(x)$ becomes a polynomial) and the coefficients depend on $n$. The equation given in \eqref{eq:theequation} is {no} longer of hypergeometric type.
\end{remark}

\begin{figure}[!ht]
\centering
\includegraphics[scale=0.9]{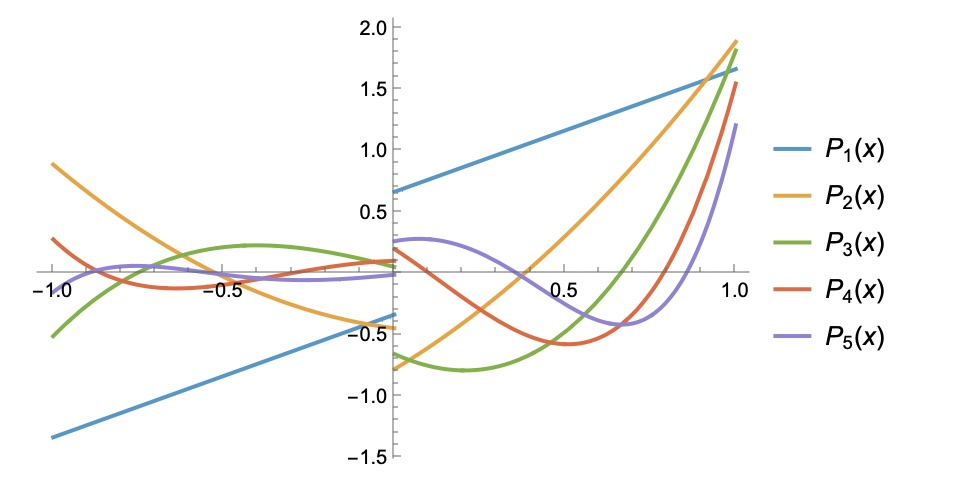}
\caption{The first five monic orthogonal $g$-Legendre $g$-polynomials $\hat{P}_{n}(x;g)$ for $\alpha=1$.}
\label{figure2}
\end{figure}

\begin{figure}[!ht]
\centering
\includegraphics[scale=0.4]{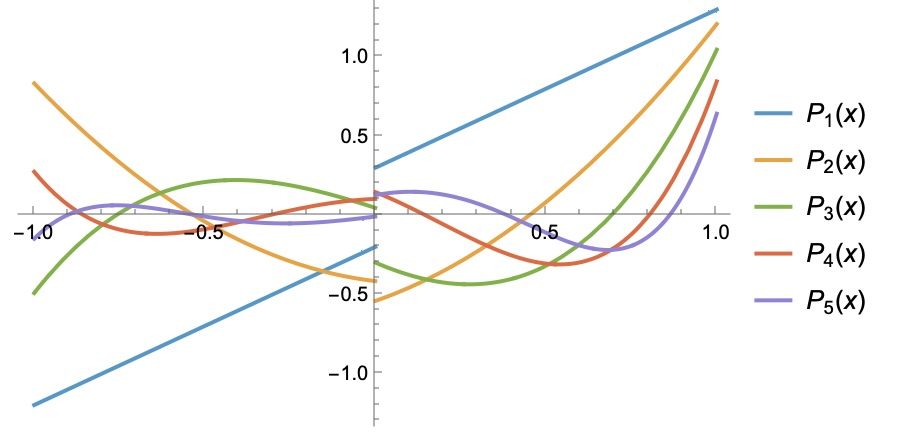}
\caption{The first five monic orthogonal $g$-Legendre $g$-polynomials $\hat{P}_{n}(x;g)$ for $\alpha=1/2$.}
\label{figure3}
\end{figure}

\begin{figure}[!ht]
\centering
\includegraphics[scale=0.4]{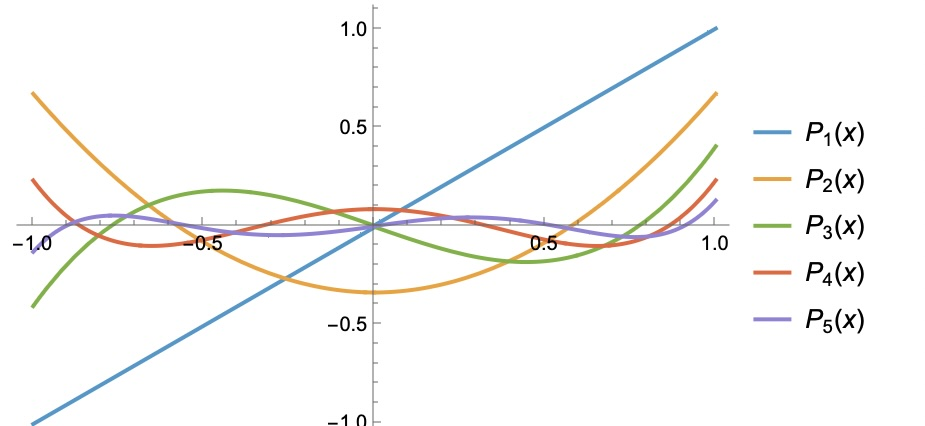}
\caption{The first five monic orthogonal $g$-Legendre $g$-polynomials $\hat{P}_{n}(x;g)$ for $\alpha=1/100$.}
\label{figure4}
\end{figure}

\subsection{Comparison of the two orthogonalities}\label{sub:comparison}

It is important to compare precisely the two notions of orthogonality considered in this paper, for the model derivator $g_{1}$.

In Section~\ref{sec:gop}, orthogonality is defined through the ordinary pointwise product of functions and the Stieltjes measure $\mu_{g}$. For $g_{1}$ this is the Legendre measure perturbed by a Dirac mass of size $\alpha$ at the \emph{interior} point $x=0$, that is, a classical Uvarov modification at an interior point.

In the present section, orthogonality is defined through the intrinsic product $\star$. Since $\star$ is not the pointwise product, the two bilinear forms are different, and so are the two families: compare $\hat P_{2}^{g}(x)=x^{2}-2/(3(\alpha+2))$ with $\hat P_{2}(x;g)$ obtained from \eqref{eq:explg1}. Nevertheless, by Corollary~\ref{cor:k1}, the second family is again a Uvarov--Legendre family, now with the mass placed at the \emph{endpoint} $t=1$ of the auxiliary variable, and written in the $g$-monomial basis through $\Phi$. Explicitly,
\begin{equation}\label{eq:identification}
\hat P_{n}(x;g)=\Phi(\hat q_{n})(x),\qquad
\hat q_{n}(t)=\hat P_{n}(t)-\frac{\alpha\hat P_{n}(1)}{1+\alpha K_{n-1}(1,1)}K_{n-1}(t,1),
\end{equation}
and every formula of this section (the coefficients \eqref{eq:Uclosed-paper} and \eqref{eq:Tclosed-paper}, the norms $d_{n}^{2}$, the recurrence coefficients \eqref{eq:betan}--\eqref{eq:gamman} and the $g$-differential equation \eqref{eq:theequation}) is the $\Phi$-image of the corresponding Uvarov formula. In particular, the identity \eqref{eq:identification} provides a second, structural proof of the orthogonality relation established above.

We therefore do not claim that the $\star$-orthogonality of Section~\ref{sec:gogp} produces a new abstract orthogonal family; by Theorem~\ref{thm:transfer-alg} and Remark~\ref{rem:honest} it cannot. What the model derivator illustrates is the mechanism: a single jump to the right of the centre transports a Dirac mass to an endpoint of the auxiliary variable, and the resulting ordinary polynomial family acquires an explicit realisation as a $g$-polynomial function of $x$, satisfying an explicit second-order $g$-differential equation with respect to $D_{g}$. The situation changes qualitatively as soon as the derivator has two or more right-hand jumps: Corollary~\ref{cor:k2} shows that derivative evaluations appear and Theorem~\ref{thm:positivity} rules out representation by a positive measure.

\section{Conclusions and open problems}\label{sec:conclusions}

The main conclusion of this work is a classification rather than the introduction of a new abstract polynomial algebra.  Once the Stieltjes monomials are linearly independent, the \(\star\)-algebra is isomorphic to \(\mathbb R[t]\) through \(\Phi(t^n)=g_{x_0,n}\), and \(D_g\) is transported to the ordinary derivative.  Therefore every \(\star\)-orthogonal family is the image of an ordinary monic orthogonal polynomial system associated with a quasi-definite linear moment functional.  The one-jump family analysed in Section~\ref{sec:model} is, in particular, an endpoint Uvarov--Legendre family expressed in the Stieltjes basis.  This resolves the structural objection raised against the earlier version of the manuscript.

The nontrivial object is the functional itself.  The master formula
\[
 \mathcal L(t^n)=\frac{g_{x_0,n+1}(b)-g_{x_0,n+1}(a)}{n+1}
\]
and the Stieltjes exponential lead to the exact representation
\[
 \mathcal L(e^{zt})
 =\frac1z\left(e^{c_+z}\Pi_A(z)-e^{-c_-z}\Pi_B(z)^{-1}\right).
\]
This representation has several consequences that are invisible in the original variable.  The detailed continuous part of \(g\) is compressed to the two increments \(c_-\) and \(c_+\); jump locations disappear completely; and jump sizes survive only as the multisets \(A\) and \(B\).  Theorem~\ref{thm:fibres} proves the converse: these four objects are recoverable from the meromorphic moment generating function, so they classify the fibres exactly.

The left--right asymmetry is structural.  Right jumps create the factor \(\Pi_A\) and therefore endpoint derivative evaluations.  For one jump this is the classical Uvarov mass; for two or more jumps it becomes a special Dirac-derivative perturbation whose coefficients are elementary symmetric functions.  Left jumps create \(\Pi_B^{-1}\) and therefore an absolutely continuous tail.  For one left jump the tail is exponential; for finitely many jumps it is the survival function of a finite sum of independent exponentials (hypoexponential in the usual finite setting), while the general theorem also permits a countable summable family of jumps.  Thus the derivative series that appears if one expands about the left endpoint is not best interpreted as an exotic infinite-order object: on polynomials it is the moment expansion of a positive measure.

The positivity theorem is sharp.  A member of the normalized unweighted image is a positive-measure moment functional if and only if there is at most one right jump.  Positive definiteness requires, in addition, infinite support.  The proof uses the Hamburger moment theorem, exponential-moment control and the boundedness of Fourier transforms of finite signed measures; it does not rely on any finite Hankel truncation.  This distinction matters numerically, since a two-right-jump functional with a very small second jump can have many initially positive Hankel determinants before positivity eventually fails.

The weighted lattice construction is deliberately kept separate from the unweighted image theorem.  There the transported moments are factorial moments.  This explains the three qualitatively different examples: Meixner gives a scaled Gamma--Laguerre functional, Charlier gives a rank-one evaluation functional, and Krawtchouk gives the finite derivative functional \eqref{eq:krawfunctional}.  In the last case the exact Hankel formula \eqref{eq:krawdet} shows that the first singular determinant is \(\widehat\Delta_{N+2}\), not \(\widehat\Delta_{N+1}\); finite support of the original discrete weight does not turn the transported functional into the same finite positive measure.

The classification isolates a natural positive family that deserves separate study.  Taking \(A=\emptyset\), \(B\ne\emptyset\), and \(c_-+c_+>0\) gives the weight
\begin{equation}\label{eq:openweight}
 w_B(t)=
 \begin{cases}
 S_B(-c_--t),&t<-c_-,\\
 1,&-c_-\le t\le c_+,\\
 0,&t>c_+.
 \end{cases}
\end{equation}
For a single \(\beta\in B\), the left branch is \(e^{(t+c_-)/\beta}\).  We have found literature on other discontinuous or jump-modified weights, including Gaussian examples \cite{XuZhao2011,Bogatskiy2016}, but not a systematic treatment of this particular flat-plus-exponential or flat-plus-exponential-convolution family.  We therefore leave the following problem open.

\begin{problem}[Flat-plus-exponential-convolution orthogonality]\label{prob:tail}
For the positive weight \eqref{eq:openweight}, determine the monic orthogonal polynomials and study, as explicitly as possible:
\begin{enumerate}
\item their recurrence coefficients and the dependence of those coefficients on \(c_-\), \(c_+\), and the multiset \(B\);
\item localization and asymptotic distribution of their zeros;
\item strong and ratio asymptotics;
\item whether the piecewise logarithmic derivative of \(w_B\) leads to useful ladder, Pearson-type, Riemann--Hilbert, or spectral relations;
\item limiting transitions when one or several elements of \(B\) tend to zero, coalesce, or diverge under a suitable rescaling.
\end{enumerate}
\end{problem}

A second open direction is quasi-definiteness outside the positive region \(|A|\le1\).  The image theorem fixes the functional explicitly, but a complete description of the zero set of its Hankel determinants for arbitrary \(A\) and \(B\) remains open.  A third direction is to repeat the image analysis for centres outside the orthogonality interval and, separately, for the non-monotone Stieltjes calculus of \cite{FrTo,MaiaTojo}.  These are genuine extensions of the classification proved here and should not be conflated with the already resolved algebraic transfer problem.

\section*{Conflicts of interest}

This work does not have any conflicts of interest.

\section*{Data availability}

There is no data related to this manuscript.

\section*{Acknowledgements}

F. J. Fern\'andez and F. A. F. Tojo were supported by grant PID2020-113275GB-I00 funded by Xunta de Galicia, Spain, project ED431C 2023/12; and by MCIN/AEI/\allowbreak 10.13039/501100011033, Spain, and by ``ERDF A way of making Europe'' of the European Union.


\end{document}